\newcommand{\bydef}{:=} 
\newcommand{\veps}{\varepsilon}
\newcommand{\ab}{\mathrm{ab}}
\newcommand{\id}{\mathrm{id}}%identity map
\newcommand{\wh}[1]{\widehat{#1}}
\newcommand{\vphi}{\varphi} 
\newcommand{\diag}{\mathrm{diag}}
\newcommand{\matr}[1]{\left(\begin{smallmatrix}#1\end{smallmatrix}\right)}
\newcommand{\tr}{\mathrm{tr}}
\newcommand{\zero}{{\bar{0}}} 
\newcommand{\one}{{\bar{1}}}
\newcommand{\cA}{\mathcal{A}}%calligraphic
\newcommand{\cB}{\mathcal{B}}
\newcommand{\cC}{\mathcal{C}}
\newcommand{\cD}{\mathcal{D}} 
\newcommand{\cH}{\mathcal{H}}
\newcommand{\cL}{\mathcal{L}}
\newcommand{\cM}{\mathcal{M}}
\newcommand{\cN}{\mathcal{N}} 
\newcommand{\cR}{\mathcal{R}} 
\newcommand{\cW}{\mathcal{W}}
\newcommand{\frg}{{\mathfrak g}}%Gothic
\newcommand{\frh}{{\mathfrak h}}
\newcommand{\fra}{{\mathfrak a}}
\newcommand{\frs}{{\mathfrak s}}
\newcommand{\frX}{\mathfrak{X}}
\newcommand{\ZZ}{\mathbb{Z}}
\newcommand{\FF}{\mathbb{F}} 
\newcommand{\HH}{\mathbb{H}} 
\newcommand{\OO}{\mathbb{O}}
\newcommand{\QQ}{\mathbb{Q}}
\newcommand{\RR}{\mathbb{R}}
\newcommand{\Alb}{\mathbb{A}} 
\newcommand{\ad}{\mathrm{ad}}
\newcommand{\Gl}{\mathfrak{gl}} 
\newcommand{\Sl}{\mathfrak{sl}}
\newcommand{\frso}{{\mathfrak{so}}}
\DeclareMathOperator{\rank}{\mathrm{rank}} %rank
\DeclareMathOperator*{\ot}{\otimes}%allows placement of subscript below in displaymath 
\DeclareMathOperator{\Int}{\mathrm{Int}} %inner automorphisms
\DeclareMathOperator{\trank}{\mathrm{tor.\!rank}} %toral rank
\DeclareMathOperator{\chr}{\mathrm{char}}%characteristic
\DeclareMathOperator{\Aut}{\mathrm{Aut}}%automorphism group
\DeclareMathOperator{\Der}{\mathrm{Der}}
\DeclareMathOperator{\Stab}{\mathrm{Stab}}%stabilizer
\DeclareMathOperator{\Hom}{\mathrm{Hom}}
\DeclareMathOperator{\Diag}{\mathrm{Diag}}%diagonal automorphisms (scalar on each component)
\DeclareMathOperator{\End}{\mathrm{End}}
\DeclareMathOperator{\Cent}{\mathrm{Cent}}
\DeclareMathOperator{\AAut}{\mathbf{Aut}} %automorphism group scheme 
\newcommand{\Diags}{\mathbf{Diag}}
\newcommand{\Stabs}{\mathbf{Stab}}%stabilizer
\newcommand{\Qs}{\mathbf{Q}}%quasitorus
\newcommand{\Ts}{\mathbf{T}}%torus (as a group scheme)
\newenvironment{romanenumerate} 
{\begin{enumerate}

}{\end{enumerate}}
\newtheorem{theorem}{Theorem}[section]
\newtheorem{proposition}[theorem]{Proposition}
\newtheorem*{proposition*}{Proposition}
\newtheorem{lemma}[theorem]{Lemma}
\newtheorem{corollary}[theorem]{Corollary}
\theoremstyle{definition} 
\newtheorem{df}[theorem]{Definition}
\newtheorem{example}[theorem]{Example}
\theoremstyle{remark} \newtheorem{remark}[theorem]{Remark}
\numberwithin{equation}{section}
\begin{document}

\title[Almost fine gradings and classification of gradings]{Almost fine gradings on algebras and classification of gradings up to isomorphism}

\author[A.~Elduque]{Alberto Elduque} 
\address[A.E.]{Departamento de Matem\'{a}ticas e Instituto Universitario de Matem\'aticas y Aplicaciones, 
	Universidad de Zaragoza, 50009 Zaragoza, Spain}
\email{elduque@unizar.es} 
\thanks{A.E. is supported by grant PID2021-123461NB-C21, funded by MCIN/AEI/10.13039/\\ 501100011033 and 
 ``ERDF A way of making Europe'', and by grant E22\_23R (Gobierno de Arag\'on, Grupo de investigaci\'on ``\'Algebra y Geometr{\'\i}a'').}

\author[M.~Kochetov]{Mikhail Kochetov}
\address[M.K.]{Department of Mathematics and Statistics, 
	Memorial University of Newfoundland, St.~John's, NL, A1C5S7, Canada}
\email{mikhail@mun.ca}
\thanks{M.K. is supported by Discovery Grant 2018-04883 of the Natural Sciences and Engineering Research Council (NSERC) of Canada.}

%\date{August 2023}

\subjclass[2020]{Primary 17A01; Secondary 17A36, 17B70, 16W50, 20G07}

\keywords{Grading; classification; automorphism group; Lie algebra; root grading.} 

\begin{abstract}
We consider the problem of classifying gradings by groups on a finite-dimensional algebra $\cA$ (with any number of multilinear operations) over an algebraically closed field. We introduce a class of gradings, which we call almost fine, such that every $G$-grading on $\cA$ is obtained from an almost fine grading on $\cA$ in an essentially unique way, which is not the case with fine gradings. For abelian $G$, we give a method of obtaining all almost fine gradings if fine gradings are known. We apply these ideas to the case of semisimple Lie algebras in characteristic $0$: to any abelian group grading with nonzero identity component, we attach a (possibly nonreduced) root system $\Phi$ and, in the simple case, construct an adapted $\Phi$-grading.
\end{abstract}

\maketitle

%--------------------------
\section{Introduction}\label{se:intro}

Gradings by groups on algebras appear in many areas of mathematics and mathematical physics. 
For example, if $G$ is any abelian group, then a $G$-grading on the algebra of polynomials (or Laurent polynomials) $\cA$ can be defined by assigning each variable a ``weight'' in $G$. 
We also obtain a $G$-grading on the Lie algebra of derivations $\Der(\cA)$ and on some of its important subalgebras. 
In fact, many Lie algebras come equipped with a grading by a free 
abelian group (for example, the root lattice in the case of complex 
semisimple Lie algebras), which plays a crucial role in their 
representation theory. These include the Lie algebras graded 
by not 
necessarily reduced root systems, or root-graded Lie algebras. Among 
these we find the $\ZZ$-graded Lie algebras attached to 
Jordan algebras and more general Jordan systems, or to structurable
algebras.
Gradings by $\ZZ/2\ZZ$ and more general finite abelian groups appear in the study of superalgebras, symmetric spaces, and Kac--Moody Lie (super)algebras.

Starting with \cite{PZ}, there has been considerable interest in describing all possible group gradings on important algebras, such as simple Lie algebras. In particular, abelian group gradings are closely related to symmetries (i.e., automorphisms) of the algebra.
Indeed, over an algebraically closed field of characteristic $0$, any grading by an abelian group $G$ on a finite-dimensional algebra $\cA$ 
is given by a homomorphism of algebraic groups $\widehat{G}\rightarrow \Aut(\cA)$, where 
$\widehat{G}$ is the group of multiplicative characters of the grading group $G$.  In particular, as pointed out in \cite{PZ}, the so-called \emph{fine gradings} (see below) on $\cA$ correspond to maximal abelian diagonalizable subgroups of 
$\Aut(\cA)$. Over arbitrary fields, this connection is preserved but one needs to 
use affine group schemes instead of linear algebraic groups (see e.g. \cite{EKmon}). 

In the view outlined above, the algebra $\cA$ takes center stage, while its grading groups are derived from it --- via the group (scheme) of automorphisms in the case of abelian group gradings.
On the other hand, if one wants to work with a category of graded algebras, it becomes necessary to fix the grading group $G$, which may also carry additional structure (such as a bicharacter or a cocycle) 
to define certain operations on the category (such as braiding). These two points of view lead to different kinds of classification of gradings: fine gradings up to \emph{equivalence} and $G$-gradings up to \emph{isomorphism} (see below).

\iffalse 
Gradings by abelian groups on the finite-dimensional classical simple Lie algebras over 
algebraically closed fields have been classified by transferring the problem to the associative setting and
classifying the gradings on simple associative algebras with involution. This approach has proved
to be fruitful too for the exceptional simple Lie algebras of types $G_2$ or $F_4$, where
octonions (for $G_2$) or the Albert algebras (i.e., simple exceptional Jordan algebra) are used
instead of associative algebras. However, this approach does not work for the simple exceptional 
Lie algebras of type $E$, and new ideas must be used.
\fi

The two kinds of classification are not independent of each other. Any $G$-grading on a finite-dimensional algebra is a coarsening of a fine grading. 
This means that any component of the $G$-grading is a sum of certain components of the fine grading. 
Actually, the $G$-grading is then determined by a homomorphism to $G$ from the \emph{universal group} (see \S\ref{sse:universal}) of the fine grading. 
However, there is no canonical way to attach a specific fine grading to a given $G$-grading. 
The purpose of this paper is to define a new class of gradings, which we call \emph{almost fine gradings} because they are not too far from being fine (see Proposition~\ref{prop:2}  and Theorem~\ref{th:3}). 
But, unlike fine gradings, they allow us to attach to any $G$-grading a canonical almost fine grading.
In this way, the classification of $G$-gradings up to isomorphism reduces to the classification of almost fine gradings up to equivalence and to
the determination of the \emph{Weyl groups} (see \S\ref{sse:equivalence}) of these almost fine gradings and their actions on the
universal groups (Theorem \ref{th:1}).

To explain our approach more precisely, we first need some definitions.
Let  $G$ be a group and let $\cA$ be an algebra with any number of multilinear operations. $\cA$ is 
said to be a \emph{$G$-graded algebra} if there is a fixed \emph{$G$-grading} on $\cA$, i.e., a 
direct sum decomposition of its underlying vector space, $\cA=\bigoplus_{g\in G}\cA_g$, such that, 
for any operation $\varphi$ defined on $\cA$, we have 
$\varphi(\cA_{g_1},\ldots,\cA_{g_n})\subset\cA_{g_1\cdots g_n}$ for all $g_1,\ldots,g_n\in G$, 
where $n$ is the number of arguments taken by $\varphi$.  The subspaces $\cA_g$ are called
\emph{homogeneous components}. For any nonzero element $a\in\cA_g$, we will say that $a$ is
\emph{homogeneous of degree $g$} and write $\deg a=g$. (The zero vector is also considered
homogeneous, but its degree is undefined.)

For a fixed group $G$, the class of $G$-graded vector spaces is a category in which morphisms are 
the linear maps that preserve degree. In particular, we can speak of isomorphism of $G$-graded 
algebras. Two $G$-gradings, $\Gamma$ and $\Gamma'$, on the same algebra $\cA$ are said to 
be \emph{isomorphic} if there exists an isomorphism of $G$-graded algebras 
$(\cA,\Gamma)\to(\cA,\Gamma')$ or, in other words, there exists an automorphism of the algebra 
$\cA$ that maps each component of $\Gamma$ onto the component of $\Gamma'$ of the same degree. 
If the automorphism maps each component of $\Gamma$ onto a component of $\Gamma'$, but
 not necessarily of the same degree, then $\Gamma$ and $\Gamma'$ are said to be \emph{equivalent}; 
in this setting the group $G$ need not be fixed. A grading is said to be \emph{fine} if it has no 
proper refinement (see \S\ref{sse:fine}). 

As already mentioned, one may wish to classify all $G$-gradings on $\cA$ up to isomorphism or 
fine gradings on $\cA$ up to equivalence. Both problems received much attention in the last 
two decades, especially for simple algebras in many varieties: associative, associative with involution, 
Lie, Jordan, alternative, various triple systems, and so on (see, e.g., \cite{EKmon} and the 
references therein, also \cite{Ara,BKR_Lie,AC,DET,EKR}). It should be noted that a solution 
to one of these 
problems is often instrumental in solving the other, but not in a straightforward way. 
%In this  paper we focus on how a classification of fine gradings (which may be easier to obtain) can shed some light on the classification of $G$-gradings.

Any group homomorphism $\alpha\colon G\to H$ gives a functor from $G$-graded vector spaces to 
$H$-graded ones: for $V=\bigoplus_{g\in G}V_g$, we define the $H$-graded vector space 
${}^\alpha V$ to be the same space $V$ but equipped with the $H$-grading 
$V=\bigoplus_{h\in H} V'_h$ where $V'_h\bydef\bigoplus_{g\in\alpha^{-1}(h)}V_g$. 
(This functor is the identity map on morphisms.) 
If the $G$-grading on $V$ is denoted by $\Gamma$, the corresponding $H$-grading on $V$ will be 
called \emph{induced by $\alpha$} and denoted ${}^\alpha\Gamma$. Note that the 
homogeneous elements of degree $g$ with respect to $\Gamma$ become homogeneous of degree 
$\alpha(g)$ with respect to $^\alpha{}\Gamma$.

%For a finite-dimensional algebra $\cA$ over an arbitrary field, any grading is a coarsening 
%of a fine grading. Hence, 
If $\{\Gamma_i\}_{i\in I}$ is a set of representatives of the equivalence classes of 
fine gradings on a finite-dimensional algebra $\cA$ and $U_i$ is the universal group of $\Gamma_i$, then any $G$-grading $\Gamma$ on $\cA$ is isomorphic to the 
induced grading ${}^\alpha\Gamma_i$ for some $i\in I$ and a group homomorphism 
$\alpha\colon U_i\to G$. However, both $i$ and $\alpha$ are usually far from unique, so we do not easily 
obtain a classification of $G$-gradings up to isomorphism. In this paper, we will show how to extend the class of fine gradings 
and at the same time restrict the homomorphisms $\alpha$ to obtain uniqueness (up to the action of the Weyl group).
This approach may be applied when the fine gradings on a certain algebra are known and one wishes to classify $G$-gradings. 
For example, this is the case for the exceptional simple Lie algebras of types $E_6$, $E_7$, $E_8$ (see \cite{EKmon,Eld16} and the references therein). 

\iffalse 
The purpose of this paper is to introduce a class of gradings in such a way that any $G$-grading is 
induced from one of them in an essentially unique way (see Theorem~\ref{th:1}), and at the same 
time this class is not too far removed from fine gradings (see Proposition~\ref{prop:2} 
and Theorem~\ref{th:3}). We call this class \emph{almost fine gradings}.
\fi 

The paper is structured as follows. After reviewing preliminaries on gradings and algebraic groups in Section~\ref{se:preliminaries}, 
we introduce almost fine gradings on a finite-dimensional algebra $\cA$ in 
Section~\ref{se:def_ex}. The goal of Section~\ref{se:classification_iso} is to prove Theorem~\ref{th:1}, which classifies all $G$-gradings on $\cA$ up to isomorphism if we know almost fine gradings on $\cA$ up 
to equivalence. In Section~\ref{se:fine_to_almost_fine}, we discuss how to obtain almost fine gradings if we know fine gradings  (Proposition~\ref{prop:2}  and Theorem~\ref{th:3}). Finally,
 in Section~\ref{se:Lss}, we apply these ideas to the case of abelian group gradings on semisimple
 Lie algebras that have nontrivial identity component: to any such grading $\Gamma$ on $\cL$, we 
attach a (possibly nonreduced) root system $\Phi$ (Theorem~\ref{th:Lss_rootsystem}) and, in 
the case of simple $\cL$, construct a $\Phi$-grading on $\cL$ adapted to 
$\Gamma$ (Theorem~\ref{th:Lss_structure}).

Except in Section~\ref{se:preliminaries}, we assume that the ground field $\FF$ is 
\emph{algebraically closed}. The characteristic is arbitrary unless stated otherwise.

%--------------------------
\section{Preliminaries on gradings}\label{se:preliminaries}
 
In this section we will briefly review some general facts and terminology concerning gradings on 
algebras, most of which go back to J.~Patera and H.~Zassenhaus \cite{PZ}. 
We will also introduce notation that is used throughout the paper. The reader is referred to Chapter~1 
of the monograph \cite{EKmon} for more details, and to \cite{Hum_lag,Wat} for the background 
on (linear) algebraic groups and (affine) group schemes.
 
 \subsection{Gradings and their universal groups}\label{sse:universal}
 
There is a more general concept of a grading on an algebra $\cA$, namely, a set %$S$ 
of nonzero subspaces of $\cA$, which we write as $\Gamma=\{\cA_s\}_{s\in S}$ for convenience, 
such that $\cA=\bigoplus_{s\in S}\cA_s$ and, for any $n$-ary operation $\varphi$ defined on 
$\cA$ and any $s_1,\ldots,s_n\in S$, there exists $s\in S$ such that 
$\varphi(\cA_{s_1},\ldots,\cA_{s_n})\subset\cA_s$. Any $G$-grading on $\cA$ becomes a 
grading in this sense if we take $S$ to be its \emph{support}: $S=\{g\in G\mid\cA_g\ne 0\}$. 
 
 For a given grading $\Gamma$ on an algebra $\cA$ as above, there may or may not exist a realization 
of $\Gamma$ over a group $G$, by which we mean an injective map $\iota\colon S\to G$ such that assigning 
the nonzero elements of $\cA_s$ degree $\iota(s)\in G$, for all $s\in S$, and taking the 
homogeneous components of degree in $G\smallsetminus\iota(S)$ to be zero defines a 
$G$-grading on $\cA$. If such realizations exist, there is a universal one among them. Indeed, 
let $U=U(\Gamma)$ be the group generated by the set $S$ subject to all relations of the form 
$s_1\cdots s_n=s$ whenever $0\ne\varphi(\cA_{s_1},\ldots,\cA_{s_n})\subset\cA_s$ for an 
$n$-ary operation $\varphi$ on $\cA$. It is easy to see that $\Gamma$ admits a realization over a group (not 
necessarily $U$) if and only if the canonical map $\iota_0\colon S\to U$ is injective. If this is the case, then we will say that 
$\Gamma$ is a \emph{group grading}. Then $(U,\iota_0)$ is universal among all realizations 
$(G,\iota)$ of $\Gamma$ in the sense that there exists a unique group homomorphism 
$\alpha\colon U\to G$ such that $\alpha\iota_0=\iota$. We will call $U$ the \emph{universal group} of 
$\Gamma$. 
 
 Many gradings (for example, all group gradings on simple Lie algebras) can be realized over an 
abelian group. We will call them \emph{abelian group gradings}. Let $U_\ab=U_\ab(\Gamma)$ be 
the abelianization of $U(\Gamma)$, i.e., the \emph{abelian} group generated by $S$ subject to
 the relations above. Then $\Gamma$ has a realization over some abelian group if and only if 
the canonical map $\iota_0\colon S\to U_\ab$ is injective, and in this case $(U_\ab,\iota_0)$ is the 
universal one among such realizations. We will call $U_\ab$ the \emph{universal abelian group} of 
$\Gamma$.
 
 \subsection{Equivalence and automorphisms of gradings}\label{sse:equivalence}
 
 An \emph{equivalence of graded algebras} from $\cA=\bigoplus_{s\in S}\cA_s$ to 
$\cB=\bigoplus_{t\in T}\cB_t$ is an algebra isomorphism $\psi\colon \cA\to\cB$ such that, for any 
$s\in S$, we have $\psi(\cA_s)=\cB_t$ for some $t\in T$. Since we assume that all $\cA_s$ are nonzero, 
$\psi$ defines a bijection $\gamma\colon S\to T$ such that $\psi(\cA_s)=\cB_{\gamma(s)}$ for all $s\in S$. 
If these are group gradings and we realize them over their universal groups, $U$ and $U'$, then 
any equivalence $\psi\colon \cA\to\cB$ leads to an isomorphism: the bijection of the supports 
$\gamma\colon S\to T$ determined by $\psi$ extends to a unique isomorphism of the universal groups, 
which we also denote by $\gamma$, so that $\psi\colon {}^\gamma\cA\to\cB$ is an isomorphism of 
$U'$-graded algebras.
 
 Two gradings, $\Gamma$ and $\Gamma'$, on the same algebra $\cA$ are said to be \emph{equivalent} 
if there exists an equivalence $(\cA,\Gamma)\to(\cA,\Gamma')$ or, in other words, there exists 
an automorphism of the algebra $\cA$ that maps the set of nonzero components of $\Gamma$ to 
that of $\Gamma'$. 
 In particular, we can consider the group $\Aut(\Gamma)$ of all equivalences from the graded algebra 
$(\cA,\Gamma)$ to itself. Applying the above property of universal groups, we see that the permutation 
of the support of $\Gamma$ defined by any element of $\Aut(\Gamma)$ extends to a 
unique automorphism of the universal group $U=U(\Gamma)$. This gives us a group homomorphism 
$\Aut(\Gamma)\to\Aut(U)$, whose kernel is denoted $\Stab(\Gamma) $ and consists of all 
degree-preserving automorphisms, i.e., isomorphisms from the graded algebra $(\cA,\Gamma)$ to 
itself. The image of this homomorphism $\Aut(\Gamma)\to\Aut(U)$ is known as the \emph{Weyl group} 
of the grading $\Gamma$:
 \[
 W(\Gamma)\bydef\Aut(\Gamma)/\Stab(\Gamma)\hookrightarrow\Aut(U(\Gamma)).
 \]
 If we deal with abelian group gradings, the universal abelian groups can be used, and we can 
regard $W(\Gamma)$ as a subgroup of $\Aut(U_\ab(\Gamma))$.
 
 \subsection{Fine gradings}\label{sse:fine}
 
 A grading $\Gamma:\cA=\bigoplus_{s\in S}\cA_s$ is said to be a \emph{refinement} of a grading 
$\Gamma':\cA=\bigoplus_{t\in T}\cA'_t$ (or $\Gamma'$ a \emph{coarsening} of $\Gamma$) if, for 
any $s\in S$, there exists $t\in T$ such that $\cA_s\subset\cA'_t$. If the inclusion is proper for at 
least one $s\in S$, the refinement (or coarsening) is called \emph{proper}. 
 
 For example, if $\Gamma$ is a grading by a group $G$ (so $S\subset G$) and $\alpha\colon G\to H$ is a 
group homomorphism, then ${}^\alpha\Gamma$ is a coarsening of $\Gamma$, which is proper if and 
only if $\alpha|_S$ is not injective. If $G$ is the universal group of $\Gamma$, then any coarsening 
$\Gamma'$ that is a grading by a group $H$ necessarily has the form ${}^\alpha\Gamma$ for a 
unique group homomorphism $\alpha\colon G\to H$. 
 
 A group grading (respectively, abelian group grading) is said to be \emph{fine} if it does not have 
a proper refinement that is itself a group (respectively, abelian group) grading. Note that the concept 
of fine grading is relative to the class that we consider. For example, there is a  $\ZZ^n$-grading 
defined on the matrix algebra $M_n(\FF)$ by declaring the degree of the matrix unit $E_{ij}$ to be 
$\veps_i-\veps_j$, where $\{\veps_1,\ldots,\veps_n\}$ is the standard basis of $\ZZ^n$. This grading 
is fine in the class of group gradings, but has a refinement whose components are the 
one-dimensional subspaces spanned by $E_{ij}$. This latter cannot be realized over a group, although 
it can be realized over a semigroup, for instance, over the semigroup $\{0,\veps_{ij}\mid  1\leq i,j\leq n\}$
with $0\veps_{ij}=\veps_{ij}0=0^2=0$ and $\veps_{ij}\veps_{kl}=\delta_{jk}\veps_{il}$, 
by assigning degree $\veps_{ij}$ to the matrix unit $E_{ij}$. (Note that the group completion 
of this semigroup is trivial.)
 
 \subsection{Gradings and actions}\label{sse:grad_act}
 
 Given a $G$-grading $\Gamma:\cA=\bigoplus_{g\in G}\cA_g$, any group homomorphism 
$\chi\colon G\to\FF^\times$, where $\FF^\times$ denotes the multiplicative group of $\FF$, acts as 
an automorphism of $\cA$ as follows: $\chi\cdot a=\chi(g)a$ for all $a\in\cA_g$ and $g\in G$, which 
is then extended to the whole $\cA$ by linearity. Note that this is actually an automorphism of $\cA$ 
as a graded algebra, as it leaves each component $\cA_g$ invariant --- in fact, acts on it as the 
scalar operator $\chi(g)\,\id_{\cA_g}$. Thus $\Gamma$ defines a group homomorphism 
$\eta_\Gamma$ from the group of (multiplicative) characters $\wh{G}\bydef\Hom(G,\FF^\times)$ 
to the automorphism group $\Aut(\cA)$, which is particularly useful if $G$ is abelian and $\FF$ 
is algebraically closed and of characteristic $0$, because then $\wh{G}$ separates points of $G$ 
and, therefore, the grading $\Gamma$ can be recovered as a simultaneous eigenspace decomposition 
with respect to these automorphisms:
\begin{equation}\label{eq:eigenspace}
 \cA_g=\{a\in\cA\mid\chi\cdot a=\chi(g)a\text{ for all }\chi\in\wh{G}\}.
\end{equation}
 For example, the above $\ZZ^n$-grading on $M_n(\FF)$ corresponds to the homomorphism from 
the (algebraic) torus $(\FF^\times)^n$ to $\Aut(M_n(\FF))$ that sends $(\lambda_1,\ldots,\lambda_n)$ 
to the inner automorphism $\Int\diag(\lambda_1,\ldots,\lambda_n)$. 
 
 If $\cA$ is finite-dimensional and $\FF$ is algebraically closed, then $\Aut(\cA)$ is an algebraic group 
(see, e.g., \cite[Exercise 7.3]{Hum_lag} or \cite[\S 7.6]{Wat}). If $G$ is a finitely generated 
 abelian group, then $\wh{G}$ is a \emph{diagonalizable} algebraic group (\cite[\S 16]{Hum_lag}),
 isomorphic to the 
 direct product of a torus and a finite abelian group whose order is not divisible by $\chr\FF$; 
such groups are often called  \emph{quasitori} (especially in characteristic $0$). For any $G$-grading on 
$\cA$, $\eta_\Gamma$ is a homomorphism of algebraic groups. Conversely, 
 the image of any homomorphism of algebraic groups $\eta\colon \wh{G}\to\Aut(\cA)$ consists of 
commuting diagonalizable operators and, therefore, defines a simultaneous eigenspace decomposition of 
 $\cA$ indexed by the homomorphisms of algebraic groups $\wh{G}\to\FF^\times$, which are 
 canonically identified with the elements of $G$ if $\chr\FF=0$ or $\chr\FF=p$ and $G$ has no $p$-torsion. 
Thus the subspaces $\cA_g$ defined by \eqref{eq:eigenspace}, with respect to the $\wh{G}$-action 
$\chi\cdot a=\eta(\chi)a$, form a 
$G$-grading $\Gamma$ on the algebra $\cA$, and $\eta=\eta_\Gamma$. 
 
 If $\FF$ is not necessarily algebraically closed or $\chr\FF\ne 0$, one can recover the above 
one-to-one correspondence by using group schemes over $\FF$, namely, the automorphism group scheme 
$\AAut(\cA)$, defined by $\AAut(\cA)(\cR)\bydef\Aut_\cR(\cA\ot_\FF\cR)$ for any commutative 
associative unital $\FF$-algebra $\cR$, and the Cartier dual $G^D$ of an abelian group $G$, defined 
by $G^D(\cR)\bydef\Hom(G,\cR^\times)$. Then a $G$-grading $\Gamma$ on $\cA$ corresponds to 
the homomorphism of group schemes $\eta_\Gamma\colon G^D\to\AAut(\cA)$ defined by
 \begin{equation}\label{eq:eta}
 (\eta_\Gamma)_\cR(\chi)\colon a\ot r\mapsto a\ot\chi(g)r\text{ for 
     all }\chi\in\Hom(G,\cR^\times),\,a\in\cA_g,\,g\in G,\,r\in\cR.
 \end{equation}
The homomorphism $\eta_\Gamma\colon \wh{G}\to\Aut(\cA)$ in the previous paragraph is obtained by 
applying this one to $\FF$-points, i.e., taking $\cR=\FF$. 

 The image of the homomorphism $\eta_\Gamma\colon G^D\to\AAut(\cA)$ is contained in the 
following diagonalizable subgroupscheme $\Diags(\Gamma)$ of $\AAut(\cA)$, which can be defined for 
any grading $\cA=\bigoplus_{s\in S}\cA_s$:
 \begin{equation}\label{eq:Diag}
 \Diags(\Gamma)(\cR)\bydef\{\psi\in\Aut_\cR(\cA\ot\cR)\mid\psi|_{\cA_s\ot\cR}\in\cR^\times\,
\id_{\cA_s\ot\cR}\text{ for all }s\in S\}.
 \end{equation}
If $\Gamma$ is an abelian group grading and we take $G=U_\ab(\Gamma)$ in \eqref{eq:eta}, then 
it follows from the defining relations of $U_\ab(\Gamma)$ that 
$\eta_\Gamma\colon U_\ab(\Gamma)^D\to\Diags(\Gamma)$ is an isomorphism. In particular, the group of 
$\FF$-points $\Diag(\Gamma)$ is isomorphic to the group of characters of $U_\ab(\Gamma)$. 
Moreover, the automorphism group scheme $\Stabs(\Gamma)\bydef\AAut(\cA,\Gamma)$ coincides with 
the centralizer of $\Diags(\Gamma)$ in $\AAut(\cA)$.

Since we are going to assume that $\FF$ is algebraically closed, the group schemes that 
are \emph{algebraic} and \emph{smooth} can be identified with algebraic groups, by assigning to 
such a group scheme its group of $\FF$-points. For a finite-dimensional algebra $\cA$, $\AAut(\cA)$ 
is algebraic and, for an abelian group $G$, $G^D$ is algebraic if and only if $G$ is finitely generated. 
The smoothness condition is automatic if $\chr\FF=0$, but not so if $\chr\FF=p$. In particular, $G^D$
 is smooth if and only if $G$ has no $p$-torsion, and $\AAut(\cA)$ is smooth if and only if the tangent 
Lie algebra of the algebraic group $\Aut(\cA)$ coincides with $\Der(\cA)$, which is the tangent Lie 
algebra of the group scheme $\AAut(\cA)$ (in general, the former is contained in the latter). In any 
case, $G^D$ is a diagonalizable group scheme, and the centralizers of diagonalizable subgroupschemes 
in smooth group schemes are known to be smooth (\cite[Exp.~XI, 2.4]{SGA3}, cf. 
\cite[\S 18.4]{Hum_lag}). Hence, if $\Gamma$ is an abelian group grading on $\cA$ and 
$\AAut(\cA)$ is smooth, then so is $\Stabs(\Gamma)$.

\begin{proposition}\label{prop:no_p_torsion}
Let $\cA$ be a finite-dimensional algebra over an algebraically closed field $\FF$ such that 
$\AAut(\cA)$ is smooth. If $\chr\FF=p$, then $U_\ab(\Gamma)$ has no $p$-torsion for any fine 
abelian group grading $\Gamma$ on $\cA$.
\end{proposition}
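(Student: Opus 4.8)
The plan is to translate the statement into the language of group schemes. Put $\mathbf{G}:=\AAut(\cA)$ and $\mathbf{D}:=\Diags(\Gamma)$. Since $\cA$ is finite-dimensional, the support of $\Gamma$ is finite, so $U_\ab(\Gamma)$ is finitely generated and $\mathbf{D}$ is an algebraic group scheme; moreover $\eta_\Gamma$ identifies $U_\ab(\Gamma)^D$ with $\mathbf{D}$, and a Cartier dual $G^D$ of a finitely generated abelian group is smooth exactly when $G$ has no $p$-torsion (as recalled before the proposition). Thus it suffices to prove that $\mathbf{D}$ is smooth. Finally, by the standard correspondence between abelian group gradings on $\cA$ and diagonalizable subgroupschemes of $\mathbf{G}$, under which passing to a refinement corresponds to enlarging the subgroupscheme (see \cite[Ch.~1]{EKmon}), $\Gamma$ being fine means precisely that $\mathbf{D}$ is maximal among the diagonalizable subgroupschemes of $\mathbf{G}$. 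So everything reduces to the purely group-scheme claim: \emph{a maximal diagonalizable subgroupscheme of a smooth affine algebraic group over $\FF$ is smooth.}

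To prove this, I would pass to $\mathbf{C}:=\Stabs(\Gamma)$, the centralizer of $\mathbf{D}$ in $\mathbf{G}$, which is smooth by the result quoted immediately before the proposition. Then $\mathbf{D}$ is a \emph{central} subgroupscheme of $\mathbf{C}$, and maximality forces $\mathbf{D}$ to contain every diagonalizable subgroupscheme $\mathbf{D}'$ of $\mathbf{C}$: the subgroupscheme $\mathbf{D}\mathbf{D}'$ generated by $\mathbf{D}$ and $\mathbf{D}'$ is a quotient of $\mathbf{D}\times\mathbf{D}'$ (the two commute), hence diagonalizable, and it is a subgroupscheme of $\mathbf{G}$ containing $\mathbf{D}$, so $\mathbf{D}\mathbf{D}'=\mathbf{D}$ and $\mathbf{D}'\subseteq\mathbf{D}$. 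In particular $\mathbf{D}$ contains every torus of $\mathbf{C}$; fixing a maximal torus $\mathbf{T}$ of the identity component $\mathbf{C}^\circ$, we get $\mathbf{T}\subseteq\mathbf{D}^\circ$ and $\mathbf{T}$ central in $\mathbf{C}^\circ$. Consequently $\mathbf{C}^\circ$ is nilpotent and splits as $\mathbf{C}^\circ=\mathbf{T}\times\mathbf{U}$ with $\mathbf{U}=R_u(\mathbf{C}^\circ)$ unipotent and smooth: the reductive quotient of $\mathbf{C}^\circ$ has central maximal torus, hence is a torus, so $\mathbf{C}^\circ$ is solvable, and centrality of $\mathbf{T}$ promotes the usual semidirect decomposition to a direct one.

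It remains to show $\mathbf{D}$ is smooth; since the component group of a diagonalizable group scheme over $\FF$ is étale, this amounts to showing that $\mathbf{D}^\circ$ is a torus. Now $\mathbf{D}^\circ$ is a connected diagonalizable subgroupscheme of $\mathbf{T}\times\mathbf{U}$, and its projection to $\mathbf{U}$ is at once a quotient of $\mathbf{D}^\circ$ (hence diagonalizable) and a subgroupscheme of a unipotent group (hence unipotent), so it is trivial. Therefore $\mathbf{D}^\circ\subseteq\mathbf{T}$, and since the torus $\mathbf{T}$ also lies in $\mathbf{D}^\circ$, we conclude $\mathbf{D}^\circ=\mathbf{T}$. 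Hence $\mathbf{D}$ is smooth, and so $U_\ab(\Gamma)$ has no $p$-torsion.

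The main obstacle is the string of structural facts that make a maximal diagonalizable subgroupscheme of a smooth group smooth: above all the deduction that $\mathbf{C}^\circ$ is nilpotent once its maximal torus is central, together with the supporting facts that a commuting product of diagonalizable subgroupschemes is diagonalizable and that a unipotent group scheme admits no nontrivial diagonalizable subgroupscheme. The other point that needs care is the fine $\Leftrightarrow$ maximal translation — in particular, checking that a strictly larger diagonalizable subgroupscheme of $\mathbf{G}$ really yields a proper refinement of $\Gamma$ which is again an abelian group grading, rather than merely a semigroup grading.
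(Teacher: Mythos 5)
Your proposal is correct and follows essentially the same route as the paper's proof: reduce to showing a maximal diagonalizable subgroupscheme is smooth, pass to its (smooth) centralizer $\mathbf{C}$, use maximality (via the quotient-of-a-product trick) to see that the relevant torus is a central maximal torus of $\mathbf{C}^\circ$, deduce nilpotency and the splitting $\mathbf{C}^\circ=\Ts\times\mathbf{U}$, and kill the non-smooth part by projecting to the unipotent factor. The only cosmetic difference is that you work with the full identity component $\Diags(\Gamma)^\circ$ where the paper isolates the infinitesimal factor $\Qs_0$ of the decomposition $\Qs=\Qs_0\times\Qs_1\times\Ts$, which amounts to the same thing.
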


\begin{proof}
This is equivalent to the statement that any maximal diagonalizable subgroupscheme $\Qs$ of 
$\AAut(\cA)$ is smooth. We have $\Qs\simeq G^D$ for some finitely generated abelian group $G$. 
We can write $G$ as the direct product of a finite $p$-group, a finite group of order coprime to $p$, 
and a free abelian group. Consider the corresponding decomposition $\Qs=\Qs_0\times\Qs_1\times\Ts$. 
Let $\mathbf{C}$ be the centralizer of $\Qs$ in $\AAut(\cA)$. Then $\Qs\subset\mathbf{C}$, 
$\mathbf{C}$ is smooth, and $\Ts$ is a maximal torus in $\mathbf{C}$. Indeed, if 
$\Ts\subset\Ts'\subset\mathbf{C}$ for some torus $\Ts'$, then $\Qs\subset\Qs\Ts'$ and $\Qs\Ts'$ 
is diagonalizable (as a homomorphic image of $\Qs\times\Ts'$), so we get $\Ts'\subset\Qs$ by 
maximality of $\Qs$, but then $\Ts=\Ts'$ since $\Ts$ is a maximal torus in $\Qs$. Consider the 
connected component $\mathbf{C}^\circ$ (see e.g. \cite[\S 6.7]{Wat}). It is smooth and contains 
$\Ts$ as its maximal torus. Since $\Ts$ is central in $\mathbf{C}^\circ$, $\mathbf{C}^\circ$ must 
be nilpotent (for example, apply \cite[\S 21.4]{Hum_lag} to the groups of $\FF$-points) and, therefore, 
$\mathbf{C}^\circ=\Ts\times\mathbf{U}$ where $\mathbf{U}$ is unipotent (see e.g. 
\cite[\S 10.4]{Wat}). Now, $\Qs_0$ is connected, so it is contained in $\mathbf{C}^\circ$. But 
its projection to $\mathbf{U}$ must be trivial, since $\mathbf{U}$ does not have nontrivial 
diagonalizable subgroupschemes. Therefore, $\Qs_0\subset\Ts$, which forces $\Qs_0=\mathbf{1}$.
\end{proof}

\begin{corollary}\label{cor:eigen} 
Any fine abelian group grading on $\cA$ is obtained as the eigenspace decomposition with respect to 
a unique maximal diagonalizable subgroup of $\Aut(\cA)$, namely, $\Diag(\Gamma)$.
\end{corollary}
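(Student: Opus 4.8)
The plan is to prove that the subgroupscheme $\Diags(\Gamma)$ is the required object, where $\Gamma$ is realized over its universal abelian group $U\bydef U_\ab(\Gamma)$. (As this is a corollary to Proposition~\ref{prop:no_p_torsion}, I keep the standing hypothesis that $\AAut(\cA)$ is smooth.) As recalled in \S\ref{sse:grad_act}, $\eta_\Gamma\colon U^D\to\Diags(\Gamma)$ is then an isomorphism of group schemes onto a diagonalizable subgroupscheme of $\AAut(\cA)$. By Proposition~\ref{prop:no_p_torsion}, $U$ has no $p$-torsion (vacuously so if $\chr\FF=0$), hence $U^D$, and therefore $\Diags(\Gamma)$, is smooth and may be identified with the algebraic group $\Diag(\Gamma)=\Diags(\Gamma)(\FF)\leq\Aut(\cA)$; moreover $\wh U=\Hom(U,\FF^\times)$ separates the points of $U$, so the simultaneous eigenspace decomposition of $\cA$ under the action of $\Diag(\Gamma)$ via $\eta_\Gamma$ recovers $\Gamma$ precisely, as in \eqref{eq:eigenspace}. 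This already shows that $\Gamma$ is the eigenspace decomposition with respect to the diagonalizable subgroup $\Diag(\Gamma)$.

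Next I would prove that $\Diags(\Gamma)$ is maximal among the diagonalizable subgroupschemes of $\AAut(\cA)$; since $\AAut(\cA)$ is smooth, maximal diagonalizable subgroupschemes are smooth and correspond to maximal diagonalizable subgroups of $\Aut(\cA)$, so this also yields the maximality of $\Diag(\Gamma)$. Suppose $\Diags(\Gamma)$ were not maximal and let $\Qs\supsetneq\Diags(\Gamma)$ be a maximal diagonalizable subgroupscheme. By the argument in the proof of Proposition~\ref{prop:no_p_torsion}, $\Qs$ is smooth, say $\Qs\simeq H^D$ for a finitely generated abelian group $H$ (having no $p$-torsion if $\chr\FF=p$), and the simultaneous eigenspace decomposition of $\cA$ with respect to $\Qs(\FF)$ is an $H$-grading $\Gamma'$. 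Since $\Diag(\Gamma)\subseteq\Qs(\FF)$ and the components of $\Gamma$ are the $\Diag(\Gamma)$-eigenspaces, the components of $\Gamma'$ refine those of $\Gamma$; the refinement is proper, for otherwise $\Gamma$ and $\Gamma'$ have the same set of homogeneous components, so $\Diags(\Gamma')=\Diags(\Gamma)$ by \eqref{eq:Diag}, whereas $\Qs\subseteq\Diags(\Gamma')$ because $\Qs$ acts by scalars on each component of $\Gamma'$, giving $\Qs\subseteq\Diags(\Gamma)$, a contradiction. But then $\Gamma'$ is a proper refinement of $\Gamma$ by an abelian group grading, contradicting the fineness of $\Gamma$. (This maximality is also essentially contained in \cite{EKmon}.)

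For uniqueness, suppose $\Gamma$ is also the eigenspace decomposition with respect to some maximal diagonalizable subgroup $D\leq\Aut(\cA)$. Then every element of $D$ acts as a scalar on each component of $\Gamma$, so $D\subseteq\Diag(\Gamma)$; since $\Diag(\Gamma)$ is itself diagonalizable and $D$ is maximal, $D=\Diag(\Gamma)$. I expect the only delicate point to be the ``properness'' step in the maximality argument --- making sure that a strictly larger diagonalizable subgroupscheme really produces a strictly finer grading --- which is handled by the observation that the subgroupscheme in \eqref{eq:Diag} depends only on the set of homogeneous components; the bookkeeping between subgroups and subgroupschemes is then routine once Proposition~\ref{prop:no_p_torsion} supplies the necessary smoothness.
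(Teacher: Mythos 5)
Your proposal is correct and follows essentially the route the paper intends: the corollary is stated without proof because it is meant to follow from Proposition~\ref{prop:no_p_torsion} together with the facts recalled in \S\ref{sse:grad_act} (smoothness of $U_\ab(\Gamma)^D$ when there is no $p$-torsion, recovery of the grading as in \eqref{eq:eigenspace}, and the standard correspondence between fine abelian group gradings and maximal diagonalizable subgroups from \cite{EKmon}). Your three steps --- recovery of $\Gamma$ from $\Diag(\Gamma)$, maximality via the ``properness'' of the refinement produced by a strictly larger diagonalizable subgroupscheme, and uniqueness from $D\subseteq\Diag(\Gamma)$ --- are exactly the details being delegated, and they are all handled correctly.
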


Thus, if $\AAut(\cA)$ is smooth, then we have a one-to-one correspondence between the 
equivalence classes of fine abelian group gradings on $\cA$ and the conjugacy classes of 
maximal diagonalizable subgroups of $\Aut(\cA)$. 

%--------------------------
\section{Definition and construction of almost fine gradings}\label{se:def_ex}

Let $\cA$ be a finite-dimensional algebra over an algebraically closed field $\FF$. Let 
$\Gamma:\cA=\bigoplus_{s\in S}\cA_s$ be a grading on $\cA$ with nonzero homogeneous components 
$\cA_s$, universal group $U$ and universal abelian group $U_\ab$ (see Subsection \ref{sse:universal}).

\subsection{Toral rank}

It is well known that, over an algebraically closed field, all maximal tori in an algebraic group 
are conjugate (see, e.g., \cite[\S 21.3]{Hum_lag}). In particular, they have the same dimension, which is known 
as the (reductive) rank of the algebraic group. Let $r$ be the rank of the automorphism group 
$\Aut(\cA)$.

\begin{df}
The rank of the algebraic group $\Stab(\Gamma)$ of the automorphisms of the graded algebra $(\cA,\Gamma)$ will be called the \emph{toral rank} of $\Gamma$ and denoted $\trank(\Gamma)$.
\end{df}

Since $\Stab(\Gamma)\subset\Aut(\cA)$, we have $0\le\trank(\Gamma)\le r$. A maximal torus of 
$\Aut(\cA)$ gives a $\ZZ^r$-grading on $\cA$, called its \emph{Cartan grading} (for example, the 
root space decomposition of a semisimple complex Lie algebra), whose toral rank is equal to $r$, since 
$\Stab(\Gamma)$ contains this maximal torus (see Subsection \ref{sse:grad_act}). 

If $\Gamma'$ is a coarsening of $\Gamma$ (see Subsection \ref{sse:fine}), then 
$\Stab(\Gamma)\subset\Stab(\Gamma')$, so $\trank(\Gamma)\le\trank(\Gamma')$. In particular, 
any coarsening of the Cartan grading has toral rank $r$. Those among the coarsenings that are 
themselves abelian group gradings are known as \emph{toral gradings}.

\subsection{Almost fine gradings}\label{sse:almost_fine}

For any grading $\Gamma$, $\Stab(\Gamma)$ contains the quasitorus 
\[
\Diag(\Gamma)\bydef\{\psi\in\Aut(\cA)\mid\psi|_{\cA_s}\in\FF^\times\,\id_{\cA_s}
\text{ for all }s\in S\}
\]
in its center. This quasitorus is isomorphic to the group of characters of the finitely generated 
abelian group $U_\ab=U_\ab(\Gamma)$, so its dimension is equal to the (free) rank of $U_\ab$, i.e., 
the rank of the free abelian group $U_\ab/t(U_\ab)$, where $t(U_\ab)$ denotes the torsion subgroup 
of $U_\ab$. Indeed, for any finitely generated abelian group $A$, the closed subgroup of 
$\wh{A}$ consisting of all characters of $A$ that kill $t(A)$ can be identified with the group of 
characters of $A/t(A)$, so it is a torus of dimension $\rank(A)$. The quotient of $\wh{A}$ by this 
subgroup can be identified with the group of characters of $t(A)$, so it is a finite abelian group 
(whose order is not divisible by $\chr\FF$). Therefore, the connected component of the identity 
$\Diag(\Gamma)^\circ$ is isomorphic to the group of characters of $U_\ab/t(U_\ab)$, a torus of 
dimension $\rank(U_\ab)$. In particular,
\[
\rank(U_\ab(\Gamma))\le\trank(\Gamma).
\]

\begin{df}\label{df:almost_fine}
A grading $\Gamma$ on $\cA$ is \emph{almost fine} if $\rank(U_\ab(\Gamma))=\trank(\Gamma)$ or, 
in other words, $\Diag(\Gamma)^\circ$ is a maximal torus in $\Stab(\Gamma)$.
\end{df}

For example, if $\Gamma$ has toral rank $0$, then $\Gamma$ is almost fine and 
$U_\ab(\Gamma)$ is finite.
Unlike fine gradings, almost fine gradings can have proper refinements, but at least the 
toral rank cannot drop:

\begin{proposition}\label{re:refinement}
 If $\Gamma$ is almost fine, then any refinement of $\Gamma$ is almost fine and has the same 
toral rank.
\end{proposition}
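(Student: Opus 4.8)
The plan is to show that if $\Gamma$ is almost fine and $\Gamma'$ is any refinement of $\Gamma$, then $\trank(\Gamma') = \trank(\Gamma)$; combined with the general inequality $\rank(U_\ab(\Gamma')) \le \trank(\Gamma')$ and the fact that refinement does not decrease the universal abelian group rank, this will force $\Gamma'$ to be almost fine as well. First I would observe the two elementary monotonicity facts. Since $\Gamma'$ is a refinement of $\Gamma$, every homogeneous component of $\Gamma$ is a sum of components of $\Gamma'$, so any degree-preserving automorphism of $(\cA,\Gamma')$ is automatically a degree-preserving automorphism of $(\cA,\Gamma)$; that is, $\Stab(\Gamma')\subset\Stab(\Gamma)$, hence $\trank(\Gamma')\le\trank(\Gamma)$.

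For the reverse inequality, the key point is that refining a grading can only enlarge the associated quasitorus $\Diag$. More precisely, an automorphism that scales each component of $\Gamma'$ certainly scales each component of $\Gamma$ only if the scalars agree on pieces that lie in a common $\Gamma$-component — but in fact we want the other containment: $\Diag(\Gamma)\subset\Diag(\Gamma')$ is false in general, so instead I would argue as follows. Actually the correct observation is that $\Diag(\Gamma)\subset\Stab(\Gamma')$ need not hold either; rather, one uses $\Stab(\Gamma')\supset\Diag(\Gamma')$ and the inclusion $\Stab(\Gamma')\subset\Stab(\Gamma)$. So the cleanest route is: $\Diag(\Gamma')^\circ$ is a torus contained in $\Stab(\Gamma')\subset\Stab(\Gamma)$, and by hypothesis $\Diag(\Gamma)^\circ$ is a \emph{maximal} torus of $\Stab(\Gamma)$, so $\dim\Diag(\Gamma')^\circ\le\dim\Diag(\Gamma)^\circ=\trank(\Gamma)$. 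On the other hand, the natural surjection $U_\ab(\Gamma')\twoheadrightarrow U_\ab(\Gamma)$ induced by the coarsening (each generator of $U_\ab(\Gamma)$ is the image of a sum of generators of $U_\ab(\Gamma')$, so the coarsening homomorphism of universal abelian groups is surjective — this is where I would invoke the discussion of universal groups and coarsenings in \S\ref{sse:universal} and \S\ref{sse:fine}) gives $\rank(U_\ab(\Gamma'))\ge\rank(U_\ab(\Gamma))=\trank(\Gamma)$, the last equality by almost-fineness of $\Gamma$.

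Putting these together: $\trank(\Gamma) = \rank(U_\ab(\Gamma)) \le \rank(U_\ab(\Gamma')) \le \trank(\Gamma') \le \trank(\Gamma)$, so equality holds throughout. In particular $\trank(\Gamma')=\trank(\Gamma)$ and $\rank(U_\ab(\Gamma'))=\trank(\Gamma')$, which is precisely the statement that $\Gamma'$ is almost fine with the same toral rank.

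The main obstacle, and the one step that deserves care, is the surjectivity of the induced map $U_\ab(\Gamma')\to U_\ab(\Gamma)$ and hence the inequality $\rank(U_\ab(\Gamma'))\ge\rank(U_\ab(\Gamma))$. One must check that a coarsening really does induce a group homomorphism between the universal (abelian) groups in the right direction: each defining relation of $U_\ab(\Gamma)$ of the form $t_1\cdots t_n = t$ coming from $0\ne\varphi(\cA'_{t_1},\ldots,\cA'_{t_n})\subset\cA'_t$ pulls back, after choosing for each $t\in T$ a subset of $S$ (the $\Gamma'$-support) whose components sum to $\cA'_t$, to relations already forced in $U_\ab(\Gamma)$; one subtlety is that the multiplication operation in $\cA$ need not be homogeneous on the finer decomposition in a single-valued way, but since $\Gamma'$ \emph{is} a grading, for each tuple of $\Gamma'$-components the product lands in some $\Gamma'$-component, and mapping everything down to $U_\ab(\Gamma)$ is consistent. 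Alternatively, and perhaps more transparently, one can bypass universal groups entirely on this point: $\Diag(\Gamma)$ is contained in $\Diag(\Gamma')$ only if every $\Gamma$-component is a $\Gamma'$-component, which is false, so this shortcut is \emph{not} available, confirming that the universal-group argument (or equivalently a direct dimension count of the torus $\Diag(\Gamma)^\circ$ sitting inside $\Stab(\Gamma')$, which it does \emph{not} in general) is genuinely needed — hence the argument must go through $\Stab(\Gamma')\subset\Stab(\Gamma)$ together with maximality of $\Diag(\Gamma)^\circ$ as above, and the universal-abelian-group surjection is the indispensable ingredient.
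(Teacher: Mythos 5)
Your final chain of inequalities is correct and does prove the proposition, but it takes a different and longer route than the paper, and along the way you assert something false that is in fact the heart of the paper's one-line argument. You claim that $\Diag(\Gamma)\subset\Diag(\Gamma')$ ``is false in general'' and that $\Diag(\Gamma)\not\subset\Stab(\Gamma')$ in general. Both inclusions actually always hold for a refinement $\Gamma'$ of $\Gamma$: an automorphism that acts as a scalar on a component $\cA_s$ of $\Gamma$ acts as that same scalar on every subspace of $\cA_s$, in particular on each component of $\Gamma'$ contained in it. The paper's proof is exactly the chain $\Diag(\Gamma)\subset\Diag(\Gamma')\subset\Stab(\Gamma')\subset\Stab(\Gamma)$: maximality of the torus $\Diag(\Gamma)^\circ$ in $\Stab(\Gamma)$ forces $\Diag(\Gamma')^\circ=\Diag(\Gamma)^\circ$ and shows this is a maximal torus of the intermediate group $\Stab(\Gamma')$, giving both conclusions at once. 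Your route instead goes through the surjection $U_\ab(\Gamma')\twoheadrightarrow U_\ab(\Gamma)$ (which does exist and is surjective, since the support of $\Gamma$ generates $U_\ab(\Gamma)$ and each support element lifts; your discussion of why the relations are compatible is somewhat garbled in direction but the fact is standard, cf.\ \cite[Corollary~1.26]{EKmon}), combined with $\rank(U_\ab(\Gamma'))\le\trank(\Gamma')\le\trank(\Gamma)=\rank(U_\ab(\Gamma))$. This is valid and has the mild virtue of not needing conjugacy of maximal tori, but it requires the extra bookkeeping with universal groups that the paper's inclusion renders unnecessary; the intermediate paragraph about $\dim\Diag(\Gamma')^\circ\le\dim\Diag(\Gamma)^\circ$ is not used in your final chain and can be deleted. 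I would encourage you to revisit the erroneous side remarks, since the inclusion you rejected is the intended (and simplest) proof.
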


\begin{proof}
	If $\Gamma'$ is a refinement of $\Gamma$, then we have 
	\[
	\Diag(\Gamma)\subset\Diag(\Gamma')\subset\Stab(\Gamma')\subset\Stab(\Gamma).
	\]
	By hypothesis, $\Diag(\Gamma)^\circ$ is a maximal torus in $\Stab(\Gamma)$ and, hence, in 
$\Stab(\Gamma')$. 
	But $\Diag(\Gamma')^\circ$ is a torus and contains $\Diag(\Gamma)^\circ$, so 
$\Diag(\Gamma')^\circ=\Diag(\Gamma)^\circ$ by maximality. The result follows.
\end{proof}

In the next subsection, we will see that any grading $\Gamma$ admits an almost fine refinement 
and, moreover, if  $\Gamma$ is a group (respectively, abelian group) grading, then so is this 
refinement. Therefore, any fine grading (in the class of all gradings, group gradings, or abelian 
group gradings) is almost fine.

\begin{lemma}\label{re:stab0}
	If $\Gamma$ is almost fine, then $\Stab(\Gamma)^\circ$ is the direct product of the torus 
$\Diag(\Gamma)^\circ$ and a connected unipotent group (the unipotent radical).
\end{lemma}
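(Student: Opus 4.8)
The plan is to analyze the connected algebraic group $\Stab(\Gamma)^\circ$ using the fact that, by the almost-fine hypothesis, its central torus $\Diag(\Gamma)^\circ$ is a \emph{maximal} torus. First I would recall the structure theory of connected linear algebraic groups over an algebraically closed field: if a connected group $H$ has a maximal torus $T$ that is central, then $H$ is nilpotent. This is exactly the situation here with $H=\Stab(\Gamma)^\circ$ and $T=\Diag(\Gamma)^\circ$, since $\Diag(\Gamma)$ lies in the center of $\Stab(\Gamma)$ (as noted just before Definition~\ref{df:almost_fine}) and is a maximal torus of $\Stab(\Gamma)$ by the almost-fine assumption. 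One can argue nilpotency directly, e.g.\ by invoking the result that a connected linear algebraic group in which every element of a maximal torus is central must be nilpotent (this is the content of \cite[\S 21.4]{Hum_lag} and the surrounding discussion, already used in the proof of Proposition~\ref{prop:no_p_torsion}).

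Next I would invoke the structure theorem for connected nilpotent linear algebraic groups: such a group is the direct product $H = H_s \times H_u$ of its subgroup of semisimple elements and its subgroup of unipotent elements, where $H_s$ is a torus (the unique maximal torus, which is central) and $H_u = R_u(H)$ is the unipotent radical (see e.g.\ \cite[\S 10.4]{Wat} or \cite[\S 19.2]{Hum_lag}). Since $\Diag(\Gamma)^\circ$ is a maximal torus of $\Stab(\Gamma)^\circ$, it must coincide with $H_s$, and therefore
\[
\Stab(\Gamma)^\circ \;=\; \Diag(\Gamma)^\circ \times \mathbf{U},
\]
with $\mathbf{U}$ connected unipotent, as claimed. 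The identification of $\mathbf{U}$ with the unipotent radical is immediate once we know the decomposition is direct and the first factor is a torus.

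The only point requiring genuine care — and the main obstacle — is the passage to nilpotency in arbitrary characteristic. In characteristic $0$ everything is classical, but in characteristic $p$ one must be a little cautious. However, the argument only uses $\Stab(\Gamma)^\circ$ as a \emph{smooth connected algebraic group} (we are identifying it with its group of $\FF$-points over the algebraically closed field $\FF$), and the cited results of Humphreys and Waterhouse hold in all characteristics for linear algebraic groups; the decomposition of a nilpotent connected linear algebraic group into its semisimple and unipotent parts is characteristic-free. One should also note that $\Diag(\Gamma)^\circ$ need not be the full center of $\Stab(\Gamma)^\circ$, but this is harmless: all we need is that it is central (so that the maximal-torus-central criterion applies) and that it is a maximal torus (so that it is the whole $H_s$). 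Thus the proof reduces to citing the two standard structure results in the right order.
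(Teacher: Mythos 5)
Your proof is correct and follows essentially the same route as the paper: centrality of $\Diag(\Gamma)^\circ$ plus its maximality as a torus gives nilpotency of $\Stab(\Gamma)^\circ$ via \cite[\S 21.4]{Hum_lag}, and then the structure theorem for connected nilpotent linear algebraic groups (\cite[\S 19.2]{Hum_lag}) yields the direct product decomposition into the unique maximal torus and the unipotent radical. The paper's proof is exactly this two-step citation, so no further comparison is needed.
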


\begin{proof}
	Since $\Diag(\Gamma)^\circ$ is central in $\Stab(\Gamma)^\circ$ and is a maximal torus
 by hypothesis, the connected algebraic group $\Stab(\Gamma)^\circ$ is nilpotent (see e.g. 
\cite[\S 21.4]{Hum_lag}) and, hence, the direct product of its (unique) maximal torus and 
unipotent radical (see e.g. \cite[\S 19.2]{Hum_lag}).
\end{proof}

In characteristic $0$, almost fine gradings can be characterized in terms of derivations of $\cA$. Let 
$\cD=\Der(\cA)$, which is the Lie algebra of the algebraic group $\Aut(\cA)$, and let
\begin{equation}\label{eq:def_L_e}
\cD_e=\{\delta\in\Der(\cA)\mid\delta(\cA_s)\subset\cA_s\text{ for all }s\in S\},
\end{equation}
which is the Lie algebra of $\Stab(\Gamma)$. Note that, if $\Gamma$ can be realized as a $G$-grading 
for a group $G$, then the associative algebra $\End(\cA)$ has an induced $G$-grading with the 
following components:
\[
\End(\cA)_g\bydef\{f\in\End(\cA)\mid f(\cA_h)\subset\cA_{gh}\text{ for all }h\in G\},
\]
and $\cD_e=\cD\cap\End(\cA)_e$, where $e$ denotes the identity element of $G$. Moreover, if $G$ 
is abelian, then $\cD$ has an induced $G$-grading: $\cD=\bigoplus_{g\in G}\cD_g$ where 
$\cD_g=\cD\cap\End(\cA)_g$.

\begin{proposition}
	Assume $\chr\FF=0$. A grading $\Gamma:\cA=\bigoplus_{s\in S}\cA_s$ is almost fine if and only 
if, for any element $\delta\in\cD_e$, each of the restrictions $\delta|_{\cA_s}$, $s\in S$, has a 
unique eigenvalue.
\end{proposition}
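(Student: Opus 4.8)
The plan is to translate the property ``almost fine'' --- that $\Diag(\Gamma)^\circ$ is a maximal torus of $\Stab(\Gamma)$ --- into a statement about the Lie algebra $\cD_e=\mathrm{Lie}(\Stab(\Gamma))$, using the semisimple--nilpotent decomposition of derivations together with the fact that $\Diag(\Gamma)$ acts on every homogeneous component $\cA_s$ by scalars. Set $\mathfrak d_\Gamma\bydef\{\delta\in\cD_e\mid\delta|_{\cA_s}\in\FF\,\id_{\cA_s}\text{ for all }s\in S\}$. Differentiating the scalar action of $\Diag(\Gamma)$ on the subspaces $\cA_s$ shows $\mathrm{Lie}(\Diag(\Gamma))=\mathrm{Lie}(\Diag(\Gamma)^\circ)\subseteq\mathfrak d_\Gamma$. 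Conversely, a family of scalars $(\lambda_s)_{s\in S}$ defines (via $\delta|_{\cA_s}=\lambda_s\,\id_{\cA_s}$) a well-defined element $\delta\in\mathfrak d_\Gamma$ precisely when $\lambda_s=\lambda_{s_1}+\cdots+\lambda_{s_n}$ for every operation $\varphi$ with $0\ne\varphi(\cA_{s_1},\dots,\cA_{s_n})\subseteq\cA_s$; these are exactly the additive analogues of the defining relations of $U_\ab(\Gamma)$, so $\mathfrak d_\Gamma\cong\Hom(U_\ab(\Gamma),\FF^+)$, where $\FF^+$ is the additive group of $\FF$. Since $\chr\FF=0$, the torsion of $U_\ab(\Gamma)$ contributes nothing and this has dimension $\rank(U_\ab(\Gamma))=\dim\Diag(\Gamma)^\circ$; combined with the inclusion just noted, this gives $\mathfrak d_\Gamma=\mathrm{Lie}(\Diag(\Gamma)^\circ)$.

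For the ``only if'' direction, assume $\Gamma$ is almost fine. By Lemma~\ref{re:stab0}, $\Stab(\Gamma)^\circ=\Diag(\Gamma)^\circ\times\mathbf U$ with $\mathbf U$ connected unipotent, so $\cD_e=\mathrm{Lie}(\Diag(\Gamma)^\circ)\oplus\mathrm{Lie}(\mathbf U)$. Write $\delta\in\cD_e$ as $\delta=\delta_0+\delta_1$ along this decomposition. Then $\delta_0\in\mathfrak d_\Gamma$, so $\delta_0|_{\cA_s}=\lambda_s\,\id_{\cA_s}$ for some scalar $\lambda_s$, while $\delta_1$ lies in the Lie algebra of the unipotent group $\mathbf U$ and is therefore a nilpotent endomorphism of $\cA$ (unipotent linear groups are conjugate into the strictly upper triangular subgroup), hence so is its restriction to the invariant subspace $\cA_s$. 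As $\lambda_s\,\id_{\cA_s}$ is central in $\End(\cA_s)$, every eigenvalue of $\delta|_{\cA_s}=\lambda_s\,\id_{\cA_s}+\delta_1|_{\cA_s}$ equals $\lambda_s$, so $\delta|_{\cA_s}$ has a unique eigenvalue.

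For the ``if'' direction, assume each restriction $\delta|_{\cA_s}$ ($\delta\in\cD_e$, $s\in S$) has a unique eigenvalue, and let $T'$ be a maximal torus of $\Stab(\Gamma)$; since $\Diag(\Gamma)^\circ$ is a central torus of $\Stab(\Gamma)$, it is contained in $T'$. Every $\delta\in\mathrm{Lie}(T')$ is a semisimple endomorphism of $\cA$, so $\delta|_{\cA_s}$ is semisimple; having a single eigenvalue, it is a scalar. Thus $\mathrm{Lie}(T')\subseteq\mathfrak d_\Gamma=\mathrm{Lie}(\Diag(\Gamma)^\circ)$, which forces $\dim T'\le\dim\Diag(\Gamma)^\circ$ and hence $T'=\Diag(\Gamma)^\circ$. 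Therefore $\Diag(\Gamma)^\circ$ is a maximal torus of $\Stab(\Gamma)$, i.e., $\Gamma$ is almost fine.

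The routine parts are the differentiation of the scalar action and the standard facts that restrictions of semisimple (resp. nilpotent) operators to invariant subspaces are again semisimple (resp. nilpotent). The step I expect to need the most care is the identity $\mathfrak d_\Gamma=\mathrm{Lie}(\Diag(\Gamma)^\circ)$ --- concretely, the inequality $\dim\mathfrak d_\Gamma\le\rank(U_\ab(\Gamma))$ invoked in the converse --- which genuinely relies on $\chr\FF=0$: in positive characteristic $\Hom(U_\ab(\Gamma),\FF^+)$ can be strictly larger than $\rank(U_\ab(\Gamma))$, consistent with the hypothesis of the proposition. (One can also bypass this dimension count: if a torus $T'$ has $\mathrm{Lie}(T')\subseteq\mathfrak d_\Gamma$, then $T'$ must act by scalars on each $\cA_s$, since in characteristic zero distinct characters of a torus have distinct differentials, whence $T'\subseteq\Diag(\Gamma)$.)
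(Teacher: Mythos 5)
Your proof is correct and follows essentially the same route as the paper's: the forward direction uses the decomposition $\Stab(\Gamma)^\circ=\Diag(\Gamma)^\circ\times\mathbf{U}$ from Lemma~\ref{re:stab0} to split $\cD_e$ into scalar and nilpotent parts, and the converse uses that elements of the Lie algebra of a torus are semisimple, hence scalars on each $\cA_s$. The only difference is that you carefully justify the final step $T'=\Diag(\Gamma)^\circ$ (via $\mathfrak d_\Gamma\cong\Hom(U_\ab(\Gamma),\FF^+)$, or the distinct-differentials argument), which the paper leaves implicit in the phrase ``This implies $T=\Diag(\Gamma)^\circ$''; your justification is valid and correctly isolates where $\chr\FF=0$ is used.
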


\begin{proof}
	If $\Gamma$ is almost fine, then the decomposition 
$\Stab(\Gamma)^\circ=\Diag(\Gamma)^\circ\times R_u$ of Lemma \ref{re:stab0} gives 
$\cD_e=\cM\oplus\cN$ where any element of $\cM$ acts as a scalar on each $\cA_s$ and any element of 
$\cN$ is nilpotent. It follows that any element of $\cD_e$ has a unique eigenvalue on each
 $\cA_s$. Conversely, suppose $\Diag(\Gamma)^\circ\subset T\subset\Stab(\Gamma)$ where $T$ is 
a torus. Then every element of the Lie algebra of $T$ is semisimple, so it must act as a scalar on each 
$\cA_s$ by hypothesis. This implies $T=\Diag(\Gamma)^\circ$, proving the maximality of 
$\Diag(\Gamma)^\circ$.
\end{proof}

There is a simpler characterization in the case of abelian group gradings if we assume that $\Aut(\cA)$ 
is a reductive algebraic group, by which we mean that its unipotent radical is trivial, but do not
 assume connectedness (contrary to the convention in \cite{Hum_lag}).  
It turns out that this characterization holds in prime characteristic as well if we assume that the 
group scheme $\AAut(\cA)$ is reductive, by which we mean that it is smooth and its group of 
$\FF$-points, $\Aut(\cA)$, is reductive.
The following result is probably known, but we could not find a reference:

\begin{lemma}\label{lm:red_cent}
	Let $G$ be a reductive algebraic group over an algebraically closed field. Then the centralizer 
$\Cent_G(Q)$ of any diagonalizable subgroup $Q\subset G$ is reductive.
\end{lemma}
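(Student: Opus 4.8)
The plan is to reduce to the connected case and then use the classification of connected reductive groups. First I would recall that a (possibly disconnected) algebraic group $G$ is reductive, in the sense used here, precisely when its identity component $G^\circ$ is reductive, i.e. $R_u(G^\circ)=\mathbf 1$. Similarly, $\Cent_G(Q)^\circ = \Cent_{G^\circ}(Q)$, so it suffices to prove the lemma for connected $G$: once we know $\Cent_{G^\circ}(Q)$ is reductive, the full centralizer $\Cent_G(Q)$ has reductive identity component and hence is reductive. Thus from now on I would assume $G$ connected reductive and aim to show $C\bydef\Cent_G(Q)$ has trivial unipotent radical.

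The key observation is that, since $Q$ is diagonalizable, it is contained in some maximal torus $T$ of $G$ (every diagonalizable subgroup of a connected group lies in a maximal torus), and then $T\subset C$, so $C$ is a connected (by the standard fact that centralizers of diagonalizable — or even just tori — subgroups in connected groups are connected, \cite[\S 22.3]{Hum_lag}) algebraic group containing a maximal torus $T$ of $G$, which is therefore also a maximal torus of $C$. Hence $\rank C = \rank G$. Now I would invoke the description of $C$ via root subgroups: writing $\Phi=\Phi(G,T)$ for the root system and $U_\alpha$ for the root subgroup associated to $\alpha\in\Phi$, one has $C = \langle T,\ U_\alpha : \alpha\in\Phi,\ \alpha|_Q = 1\rangle$. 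The set $\Psi\bydef\{\alpha\in\Phi : \alpha|_Q=1\}$ is a closed subsystem of $\Phi$ (it is the intersection of $\Phi$ with the subgroup of $X(T)$ of characters trivial on $Q$, hence closed under the addition that stays in $\Phi$), and $C$ is generated by $T$ together with the $U_\alpha$ for $\alpha\in\Psi$, with $\Psi$ its root system relative to $T$.

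From here reductivity follows from a standard criterion: a connected algebraic group $H$ with maximal torus $T$ such that $H=\langle T, U_\alpha : \alpha\in\Psi\rangle$ for a subset $\Psi\subset X(T)$ with $\Psi=-\Psi$, each $U_\alpha$ a one-dimensional $T$-root subgroup, is reductive — because $R_u(H)$ is a connected unipotent normal subgroup, hence $T$-stable, hence a product of root subgroups $U_\beta$; but any $U_\beta\subset R_u(H)$ would be normalized by $U_{-\beta}\subset H$, and $\langle U_\beta, U_{-\beta}\rangle$ is not solvable, contradicting that $R_u(H)$ is solvable and normal. (Equivalently: $R_u(H)$ meets no $U_\alpha$, so $R_u(H)\cap B = R_u(H)\cap B^-$ lies in $T$, forcing $R_u(H)\subset T$, whence $R_u(H)=\mathbf 1$.) Applying this with $H=C$ and $\Psi$ as above — which is symmetric since $\Phi=-\Phi$ and the condition $\alpha|_Q=1$ is preserved under $\alpha\mapsto-\alpha$ — gives $R_u(C)=\mathbf 1$, so $C$ is reductive.

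The main obstacle is making the structural input precise and correctly attributed: namely the fact that $\Cent_G(Q) = \langle T, U_\alpha : \alpha|_Q = 1\rangle$ for a diagonalizable subgroup $Q$ of a maximal torus $T$ (rather than just for a subtorus, the case most textbooks state), and the connectedness of this centralizer. Both are classical — Borel and Springer treat centralizers of diagonalizable subgroups — but one should either cite them carefully or, alternatively, handle the torsion of $Q$ by noting $Q = Q^\circ\times F$ and centralizing in stages: $\Cent_G(Q^\circ)$ is a Levi subgroup, hence connected reductive with root system $\{\alpha : \alpha|_{Q^\circ}=1\}$, and then $\Cent_{\Cent_G(Q^\circ)}(F)$ is the centralizer of a finite group of semisimple elements in a connected reductive group, which is reductive by the same root-subgroup argument (Steinberg's theorem on centralizers of semisimple elements, with the subtlety that such a centralizer may be disconnected, which is harmless here). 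Either route works; the rest is routine.
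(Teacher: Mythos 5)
Your argument breaks at its first substantive step: it is \emph{not} true that every diagonalizable subgroup of a connected (reductive) group lies in a maximal torus. This holds for tori and for single semisimple elements, but it fails already for finite diagonalizable subgroups. In $G=\mathrm{PGL}_2(\FF)$ with $\chr\FF\ne 2$, the images of $\diag(1,-1)$ and $\matr{0&1\\1&0}$ generate a Klein four-group $Q$ of commuting semisimple elements with $\Cent_G(Q)=Q$ finite; since any maximal torus containing $Q$ would lie in $\Cent_G(Q)$, no such torus exists. (This is precisely the phenomenon of non-toral quasitori and non-toral gradings that the paper is concerned with --- e.g.\ the fine $\ZZ_2^2$-grading on $\Sl_2$, or Example~\ref{ex:B2}; if your claim were true, every abelian group grading on a simple Lie algebra would be toral.) Consequently the description $\Cent_G(Q)=\langle T,\;U_\alpha : \alpha|_Q=1\rangle$, on which your entire reductivity argument rests, is not available, and the parenthetical assertion that centralizers of diagonalizable subgroups of connected groups are connected is likewise false (same example: that centralizer is finite of order $4$, hence has four components). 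Only the torus case of both statements is standard.

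The fallback you sketch in the last paragraph inherits the same gap. After passing to the Levi subgroup $L=\Cent_G(Q^\circ)$, the finite part $F$ still need not be toral in $L$, so ``the same root-subgroup argument'' does not apply to it; and if you instead centralize the generators of $F$ one at a time, the intermediate centralizers become disconnected, so at the next stage you must control the fixed points of a semisimple automorphism that need not be realized inside any maximal torus of the identity component. That is exactly the hard point, and it is where the paper's proof does its real work: it reduces (via the decomposition $G^\circ=HZ$ and finite-index arguments) to a connected semisimple group acted on by a cyclic group of semisimple automorphisms, passes to the simply connected cover, and invokes Steinberg's Theorem~8.1; in characteristic $0$ it can instead use the nondegeneracy of the Killing form on the fixed subalgebra. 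Your write-up correctly identifies this as ``the main obstacle,'' but the proposal does not overcome it.
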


\begin{proof}
	Without loss of generality, we assume that $Q$ is Zariski closed, so it is the product of a torus and 
a finite abelian group whose order is not divisible by $\chr\FF$. Since $\Cent_{G^\circ}(Q)$ has 
finite index in $\Cent_{G}(Q)$, it suffices to prove that $\Cent_{G^\circ}(Q)$ is reductive.
	Let $H$ and $Z$ be, respectively, the derived group $[G^\circ,G^\circ]$ and the connected 
component of the center $Z(G^\circ)^\circ$. Then  $Z$ is a torus and the radical of $G^\circ$, 
$H\cap Z$ is finite (see e.g. \cite[\S 19.5]{Hum_lag}), $H$ is connected semisimple, and  
$G^\circ=HZ$ (see e.g. \cite[\S 27.5]{Hum_lag}). 
	We claim that it suffices to prove that $\Cent_H(Q)$ is reductive. Indeed, let $C_H=\Cent_H(Q)$ 
and $C_Z=\Cent_Z(Q)$. Then
	\[
	C_H C_Z\subset\Cent_{G^\circ}(Q)\subset\tilde{C}_H\tilde{C}_Z,
	\]
	where $\tilde{C}_H\bydef\{h\in H\mid [h,q]\in H\cap Z\;\forall q\in Q\}$ and similarly for 
$\tilde{C}_Z$. Since $H\cap Z$ is finite, we have $\tilde{C}_H^\circ\subset C_H$ and 
$\tilde{C}_Z^\circ\subset C_Z$, so $C_H C_Z$ has finite index in $\tilde{C}_H\tilde{C}_Z$, and 
the claim follows.
	
	Replacing $G$ by $H$ and $Q$ by its image in the automorphism group of $H$ (see e.g. 
\cite[\S 27.4]{Hum_lag}), we arrive at the following setting: $G$ is a connected semisimple 
algebraic group, $Q$ is a closed diagonalizable subgroup of $\Aut(G)$, and we have to prove 
that the group of fixed points $G^Q$ is reductive. 
	Now, $Q$ defines a grading on the Lie algebra $\frg$ of $G$, and the Lie algebra $\frg_0$ of 
$G^Q$ is the identity component of this grading. 
	If $\chr\FF=0$, then a standard argument shows that the restriction of the Killing form of $\frg$ 
to $\frg_0$ is nondegenerate and, hence, $G^Q$ is reductive (see e.g. 
\cite[Prop.~6.2 of Chap.~1, Prop.~3.6 of Chap.~3]{OVG}, cf. \cite[Lemma~6.9]{EKmon}).
	Unfortunately, this approach does not work if $\chr\FF=p$, so we will make some further reductions. 
	
	First, we may suppose that $Q$ is finite (of order not divisible by $\chr\FF$), because $Q^\circ$ is 
a torus and the centralizers of tori in reductive algebraic groups are reductive 
(see e.g. \cite[\S 26.2]{Hum_lag}). By induction on $|Q|$, we may further suppose that 
$Q$ is cyclic: $Q=\langle s\rangle$.
	
	Second, we may assume that $G$ is simply connected, because a $Q$-equivariant isogeny 
$f\colon G\to H$ restricts to an isogeny $f^{-1}(H^Q)\to H^Q$ and $G^Q$ is a subgroup of finite index 
in $f^{-1}(H^Q)$, by the same argument as in the first paragraph. 

Now the result follows from \cite[Theorem 8.1]{Steinberg}.
\end{proof}

\begin{corollary}\label{cor:reductive_fine}
	Assume that $\AAut(\cA)$ is reductive and let $\Gamma$ be a fine abelian group grading on $\cA$. 
If $\chr\FF=0$, then $\Stab(\Gamma)=\Diag(\Gamma)$. If $\chr\FF=p$, then the index 
$[\Stab(\Gamma):\Diag(\Gamma)]$ is a power of $p$.
\end{corollary}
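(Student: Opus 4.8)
The plan is to realize $\Stab(\Gamma)$ as a centralizer and exploit its reductivity. Since $\AAut(\cA)$ is reductive, it is in particular smooth, and $\Stabs(\Gamma)$ — being the centralizer of the diagonalizable subgroupscheme $\Diags(\Gamma)$ in $\AAut(\cA)$ (see Subsection~\ref{sse:grad_act}) — is smooth as well, so $\Stab(\Gamma)$ is an algebraic group and equals $\Cent_{\Aut(\cA)}(\Diag(\Gamma))$; by Lemma~\ref{lm:red_cent} it is reductive. Because $\Gamma$ is fine, $\Diag(\Gamma)$ is a maximal diagonalizable subgroup of $\Aut(\cA)$ (Corollary~\ref{cor:eigen}) and it sits in the center of $\Stab(\Gamma)$ (Subsection~\ref{sse:almost_fine}); moreover $\Diag(\Gamma)^\circ$ is a maximal torus of $\Stab(\Gamma)$, which is just the statement that $\Gamma$ is almost fine and can also be seen directly, since for any torus $T'$ with $\Diag(\Gamma)^\circ\subseteq T'\subseteq\Stab(\Gamma)$ the product $\Diag(\Gamma)T'$ is a diagonalizable subgroup containing $\Diag(\Gamma)$, hence equals it. As $\Diag(\Gamma)^\circ$ is then a central maximal torus of the reductive group $\Stab(\Gamma)$ and $\Cent_G(S)=S$ for a maximal torus $S$ of a connected reductive group $G$, I conclude that $\Stab(\Gamma)^\circ=\Diag(\Gamma)^\circ=:T$, a central torus, and that $F:=\Stab(\Gamma)/T$ is finite.

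The heart of the argument is the following claim: every element $\psi\in\Stab(\Gamma)$ of finite order prime to $\chr\FF$ lies in $\Diag(\Gamma)$. Such a $\psi$ is semisimple and commutes with $\Diag(\Gamma)$, so $M:=\Diag(\Gamma)\langle\psi\rangle$ is a commutative subgroup, and it is Zariski closed, being a finite union of cosets of $\Diag(\Gamma)$. Its identity component is $T$, and its component group is a homomorphic image of $(\Diag(\Gamma)/\Diag(\Gamma)^\circ)\times\langle\psi\rangle$, hence of order prime to $\chr\FF$ (recall from Subsection~\ref{sse:almost_fine} that $\Diag(\Gamma)/\Diag(\Gamma)^\circ$ has order prime to $\chr\FF$). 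Therefore $M$ has no nontrivial unipotent elements (in characteristic $0$ such an element would generate a connected subgroup lying in the torus $T$; in characteristic $p$ it would have $p$-power order, hence trivial image in the prime-to-$p$ group $M/T$, so again it would lie in $T$), so every element of $M$ equals its semisimple part. A commuting family of semisimple operators on the finite-dimensional space $\cA$ is simultaneously diagonalizable, so $M$ is a diagonalizable group, and maximality of $\Diag(\Gamma)$ forces $M=\Diag(\Gamma)$, i.e. $\psi\in\Diag(\Gamma)$.

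Next I would transfer this to $F$ using divisibility of the central torus $T$: given $\bar g\in F$ of order $n$ with $\chr\FF\nmid n$, choose any lift $g\in\Stab(\Gamma)$; then $g^n\in T$, pick $t\in T$ with $t^n=g^n$ (the $n$-th power map is surjective on the torus $T$), and observe that $g':=gt^{-1}$ is a lift of $\bar g$ with $(g')^n=1$ since $t$ is central. By the claim, $g'\in\Diag(\Gamma)$, hence $\bar g\in\Diag(\Gamma)/T$. If $\chr\FF=0$ this holds for every $\bar g\in F$, so $F=\Diag(\Gamma)/T$ and $\Stab(\Gamma)=\Diag(\Gamma)$. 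If $\chr\FF=p$, the subgroup $F_{p'}\leq F$ generated by all elements of order prime to $p$ is then contained in $\Diag(\Gamma)/T$; and writing each $f\in F$ as $f=f_{p'}f_p$ (the decomposition of $f$ into its prime-to-$p$ and $p$-parts inside the cyclic group $\langle f\rangle$, with $f_{p'}\in F_{p'}$), one sees that every element of $F/F_{p'}$ is a $p$-element, so $F/F_{p'}$ is a $p$-group and $[F:F_{p'}]$ is a power of $p$. Hence $[\Stab(\Gamma):\Diag(\Gamma)]=[F:\Diag(\Gamma)/T]$ divides $[F:F_{p'}]$ and is a power of $p$.

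The main obstacle is the positive-characteristic case: one has to control the finite group $F$, and the feature that makes the combinatorial step work is exactly the coincidence, in characteristic $p$, between the semisimple/unipotent dichotomy for elements and the prime-to-$p$/$p$-power dichotomy for their orders, combined with the reductivity of $\Stab(\Gamma)$ from Lemma~\ref{lm:red_cent} — without reductivity, $\Stab(\Gamma)^\circ$ need not be a torus and the whole scheme breaks down. A secondary technical point is the identification $\Stab(\Gamma)=\Cent_{\Aut(\cA)}(\Diag(\Gamma))$ together with the compatibility of passing to $\FF$-points for the smooth group schemes involved; this is where smoothness of $\AAut(\cA)$ (part of the reductivity hypothesis) enters.
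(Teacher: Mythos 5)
Your proof is correct and follows essentially the same route as the paper's: identify $\Stab(\Gamma)$ with the centralizer of the maximal diagonalizable subgroup $\Diag(\Gamma)$ (Corollary~\ref{cor:eigen}), invoke Lemma~\ref{lm:red_cent} to get reductivity and hence $\Stab(\Gamma)^\circ=\Diag(\Gamma)^\circ$, and then use maximality of $\Diag(\Gamma)$ to absorb the semisimple (equivalently, prime-to-$p$ order) part of each element of the component group, leaving only $p$-power order in the quotient. Your reduction to finite-order representatives via divisibility of the central torus is a minor elaboration of the paper's direct argument with $s^n\in\Diag(\Gamma)$, not a different method.
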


\begin{proof}
	Let $Q=\Diag(\Gamma)$. Then, by Corollary~\ref{cor:eigen}, $\Gamma$ is the eigenspace 
decomposition of $\cA$ with respect to $Q$, so $\Stab(\Gamma)$ is the centralizer of $Q$ in 
$\Aut(\cA)$, which is reductive by Lemma~\ref{lm:red_cent}. By Lemma~\ref{re:stab0}, we then get 
$\Stab(\Gamma)^\circ=Q^\circ$. Hence, for any $s\in\Stab(\Gamma)$, we have $s^n\in Q$ for 
some $n>0$. If $\chr\FF=0$, it follows that $s$ is semisimple and, therefore, $\langle Q,s\rangle$ 
is diagonalizable, which forces $s\in Q$ by maximality of $Q$. If $\chr\FF=p$, choose $n$ 
minimal possible and write $n=mp^k$ where $p\nmid m$. Applying the above argument to $s^{p^k}$, 
we see that $s^{p^k}\in Q$. 
\end{proof}

\begin{proposition}\label{prop:reductive}
	Assume that $\AAut(\cA)$ is reductive. Then, for an abelian group grading $\Gamma$ on $\cA$, 
the following conditions are equivalent:
	\begin{enumerate}
		\item[(i)] $\Gamma$ is almost fine;
		\item[(ii)] $\Diag(\Gamma)^\circ=\Stab(\Gamma)^\circ$;
		\item[(iii)] $\rank(U_\ab(\Gamma))=\dim\cD_e$ where $\cD_e\subset\Der(\cA)$ is defined 
by \eqref{eq:def_L_e}. 
	\end{enumerate}
	If these conditions hold, $\dim\cD_e=\trank(\Gamma)$ and the elements of $\cD_e$ act as scalars
on each component of $\Gamma$.
\end{proposition}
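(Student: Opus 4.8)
The plan is to establish the equivalences (ii) $\Leftrightarrow$ (iii), (ii) $\Rightarrow$ (i) and (i) $\Rightarrow$ (ii), isolating the last one as the only place where reductivity of $\AAut(\cA)$ is used in an essential way, and then to read off the two supplementary statements. I would begin by recording the standing facts, all consequences of $\AAut(\cA)$ being reductive (hence smooth): by the discussion preceding Proposition~\ref{prop:no_p_torsion}, $\Stabs(\Gamma)$ is smooth, so $\Stab(\Gamma)$ is an algebraic group whose Lie algebra is $\cD_e$ (as one sees from the dual-numbers description of $\Stabs(\Gamma)$), and in particular $\dim\cD_e=\dim\Stab(\Gamma)=\dim\Stab(\Gamma)^\circ$; and, as recalled in \S\ref{sse:almost_fine}, $\Diag(\Gamma)^\circ$ is a torus of dimension $\rank(U_\ab(\Gamma))$ contained in the connected group $\Stab(\Gamma)^\circ$.

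Granting these, (ii) $\Leftrightarrow$ (iii) is a dimension count: condition (iii) asserts $\dim\Diag(\Gamma)^\circ=\dim\Stab(\Gamma)^\circ$, and for a connected subgroup of a connected algebraic group equality of dimensions is equivalent to equality of the groups, which is (ii). The implication (ii) $\Rightarrow$ (i) is immediate: if $\Stab(\Gamma)^\circ$ equals the torus $\Diag(\Gamma)^\circ$, then any torus of $\Stab(\Gamma)$, being connected, lies in $\Stab(\Gamma)^\circ=\Diag(\Gamma)^\circ$, so $\Diag(\Gamma)^\circ$ is a maximal torus of $\Stab(\Gamma)$, i.e.\ $\Gamma$ is almost fine. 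Together with (i) $\Rightarrow$ (ii) below this gives (i) $\Leftrightarrow$ (ii) $\Leftrightarrow$ (iii).

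The substance is the implication (i) $\Rightarrow$ (ii). Assuming $\Gamma$ almost fine, Lemma~\ref{re:stab0} gives $\Stab(\Gamma)^\circ=\Diag(\Gamma)^\circ\times R_u$ with $R_u$ a connected, smooth, unipotent group, so it is enough to show $R_u=\mathbf{1}$, equivalently that $\Stab(\Gamma)$ is reductive. I would derive this from the identification, recalled in \S\ref{sse:grad_act}, of $\Stabs(\Gamma)$ with the centralizer of $\Diags(\Gamma)\cong U_\ab(\Gamma)^D$ in $\AAut(\cA)$. If $\chr\FF=0$, then $\Diags(\Gamma)$ is smooth, so passing to $\FF$-points identifies $\Stab(\Gamma)$ with $\Cent_{\Aut(\cA)}(\Diag(\Gamma))$, the centralizer of a closed diagonalizable subgroup of the reductive group $\Aut(\cA)$, which is reductive by Lemma~\ref{lm:red_cent}. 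If $\chr\FF=p$, the same statement must be established at the level of group schemes, since $\Diags(\Gamma)$ may now be non-smooth: writing $U_\ab(\Gamma)=\ZZ^k\times F_p\times F'$ with $F_p$ a finite $p$-group and $F'$ finite of order coprime to $p$, and correspondingly $\Diags(\Gamma)=\Ts\times\Qs_0\times\Qs_1$, one first passes to $\Cent_{\AAut(\cA)}(\Ts)$ (reductive, as centralizers of tori in reductive group schemes are reductive) and then takes the centralizer there of the finite diagonalizable subgroupscheme $\Qs_0\times\Qs_1$; that this remains reductive follows by rerunning the reductions in the proof of Lemma~\ref{lm:red_cent} in the scheme setting --- reducing to the cyclic, respectively infinitesimal, case and invoking Steinberg's theorem \cite{Steinberg}, respectively its counterpart for diagonalizable subgroupschemes of reductive group schemes (\cite{SGA3}). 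In either case $R_u=\mathbf{1}$, so $\Stab(\Gamma)^\circ=\Diag(\Gamma)^\circ$.

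Finally, under (i)--(iii) we have $\Stab(\Gamma)^\circ=\Diag(\Gamma)^\circ$, which is a torus; hence $\Stab(\Gamma)$ is a quasitorus and $\trank(\Gamma)=\rank(\Stab(\Gamma))=\dim\Stab(\Gamma)^\circ=\dim\cD_e$. Moreover $\cD_e$ is the Lie algebra of $\Stab(\Gamma)^\circ=\Diag(\Gamma)^\circ$, and since every element of $\Diag(\Gamma)^\circ\subset\Diag(\Gamma)$ acts as a scalar on each component $\cA_s$, so does every element of its Lie algebra $\cD_e$. The main obstacle is exactly the characteristic-$p$ reductivity of $\Stab(\Gamma)$: because $\Diags(\Gamma)$ need not be smooth there, one cannot simply quote Lemma~\ref{lm:red_cent} but must instead push its argument through for a possibly infinitesimal diagonalizable subgroupscheme; everything else is bookkeeping with dimensions and connected components.
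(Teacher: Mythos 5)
Your treatment of (ii)$\Leftrightarrow$(iii), of (ii)$\Rightarrow$(i), and of the two supplementary statements coincides with the paper's (smoothness of $\Stabs(\Gamma)$ gives $\dim\Stab(\Gamma)=\dim\cD_e$, and the rest is a dimension count plus Lemma~\ref{re:stab0}). The divergence is in (i)$\Rightarrow$(ii) when $\chr\FF=p$. The paper does \emph{not} confront a possibly non-smooth $\Diags(\Gamma)$: it forward-references Corollary~\ref{cor:no_p_torsion} (which rests on Proposition~\ref{prop:no_p_torsion} for fine gradings together with Proposition~\ref{prop:2}, and involves no circularity) to conclude that $U_\ab(\Gamma)$ has no $p$-torsion for an almost fine $\Gamma$. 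Hence $\Diag(\Gamma)$ is an honest closed diagonalizable \emph{algebraic group}, $\Gamma$ is its eigenspace decomposition, $\Stab(\Gamma)=\Cent_{\Aut(\cA)}(\Diag(\Gamma))$ is reductive by the already-proved Lemma~\ref{lm:red_cent}, and Lemma~\ref{re:stab0} kills the unipotent factor --- exactly the argument of Corollary~\ref{cor:reductive_fine}, reused verbatim. This buys a proof entirely at the level of algebraic groups.

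Your alternative --- rerunning Lemma~\ref{lm:red_cent} at the scheme level for a possibly infinitesimal diagonalizable $\Diags(\Gamma)$ --- is where the gap sits. The target statement (reductivity of the identity component of the centralizer of a multiplicative-type subgroupscheme of a reductive group) is true, but your sketch does not establish it: the induction in Lemma~\ref{lm:red_cent} reduces to a \emph{cyclic étale} $Q$ and then invokes \cite[Theorem~8.1]{Steinberg}, which concerns fixed points of semisimple automorphisms; there is no ``counterpart for infinitesimal diagonalizable subgroupschemes'' in that reference, and the $\mu_{p^k}$-factors of $\Qs_0$ cannot be handled by induction on order of a cyclic group of automorphisms. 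You would have to import a genuinely different result (e.g.\ SGA3, Exp.~XIX, or Conrad--Gabber--Prasad A.8.10--A.8.12) rather than ``push the same argument through.'' The cleaner repair is the paper's: observe that almost fineness already forces $U_\ab(\Gamma)$ to be $p$-torsion-free, so the infinitesimal case never arises.
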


\begin{proof}
	We will see later (Corollary~\ref{cor:no_p_torsion}) that if $\Gamma$ is almost fine, 
then $U_\ab(\Gamma)$ has no $p$-torsion in the case $\chr\FF=p$. Then the  argument in the proof 
of Corollary~\ref{cor:reductive_fine} shows that (i) $\Rightarrow$ (ii). The converse is trivial.
	
	We always have $\rank(U_\ab(\Gamma))=\dim\Diag(\Gamma)$ and 
$\dim\Stab(\Gamma)\le\dim\cD_e$, since $\cD_e$ is the Lie algebra of $\Stabs(\Gamma)$. But being reductive, 
$\AAut(\cA)$ is in particular smooth, so $\Stabs(\Gamma)$, as  the centralizer
of a diagonalizable group scheme, is smooth, too, and this means 
$\dim\Stab(\Gamma)=\dim\cD_e$. It is now clear that (ii) $\Leftrightarrow$ (iii).
\end{proof}

In particular, if $\chr\FF=0$ and $\Gamma$ is an abelian group grading on a semisimple Lie algebra 
$\cL$, then $\ad\colon \cL\to\cD=\Der(\cL)$ is an isomorphism of graded algebras (for any realization of 
$\Gamma$ over an abelian group), so $\cL_e\simeq\cD_e$. Thus, $\Gamma$ is almost fine if and only 
if the quasitorus $Q=\Diag(\Gamma)$ satisfies $\dim Q=\dim\cL_e$, which is \emph{condition (*)} of 
Jun Yu \cite{YuS,YuE}, who studied such quasitori in the automorphism groups of simple complex 
Lie algebras. At the extreme values of toral rank for these almost fine gradings, we have the 
Cartan grading for which $\cL_e$ is a Cartan subalgebra of $\cL$ and \emph{special gradings} of 
Wim Hesselink \cite{Hesselink} for which $\cL_e=0$.

We note that, in general, if a grading $\Gamma$ on $\cA$ satisfies $\cD_e=0$ then $\Gamma$ is 
almost fine of toral rank $0$.

\begin{example}\label{ex:B2}
	Let $\HH$ be the split quaternion algebra over $\FF$, $\chr\FF\ne 2$, with basis 
$\{\hat{1},\hat{\imath},\hat{\jmath},\hat{k}\}$ and multiplication defined by 
$\hat{\imath}^2=\hat{\jmath}^2=1$ and 
$\hat{\imath}\hat{\jmath}=-\hat{\jmath}\hat{\imath}=\hat{k}$. We have a grading on $\HH$ by 
the Klein group $\ZZ_2^2$: $\deg\hat{1}=(\zero,\zero)$, $\deg\hat{\imath}=(\one,\zero)$, 
$\deg\hat{\jmath}=(\zero,\one)$, and $\deg\hat{k}=(\one,\one)$, so we can define a 
$\ZZ_2^3$-grading on $M_2(\HH)\simeq M_2(\FF)\ot\HH$ by setting 
$\deg(E_{ij}\ot d)=(\bar{i}-\bar{j},\deg d)\in\ZZ_2\times\ZZ_2^2$ for any nonzero 
homogeneous $d\in\HH$. 
	
	Denote by bar the standard involution of $\HH$, which maps $\hat{1}\mapsto\hat{1}$, 
$\hat{\imath}\mapsto-\hat{\imath}$,  $\hat{\jmath}\mapsto-\hat{\jmath}$, 
$\hat{k}\mapsto-\hat{k}$. The corresponding involution $*$ on $M_2(\HH)$, 
$E_{ij}\ot d\mapsto E_{ji}\ot\bar{d}$, preserves degrees, so the Lie algebra of skew elements 
	\[
	\cL=\{X\in M_2(\HH)\mid X^*=-X\}
	\]
	becomes $\ZZ_2^3$-graded. This is a simple Lie algebra of type $B_2$, which is isomorphic to 
the algebra of derivations of either itself or the associative algebra with involution $M_2(\HH)$. Since 
$\cL_e=0$, we have almost fine gradings of toral rank $0$ on $\cL$ and  $M_2(\HH)$. However, 
these gradings are not fine, because they can be refined to $\ZZ_2^4$-gradings. Indeed, transporting 
the $\ZZ_2^2$-grading via the isomorphism  $\HH\to M_2(\FF)$ defined by
	\[
	\hat{\imath}\mapsto\matr{1&0\\0&-1},\;\hat{\jmath}\mapsto\matr{0&1\\1&0},
	\]
	we obtain a $\ZZ_2^2$-grading on $M_2(\FF)$, which is a refinement of the original 
$\ZZ_2$-grading $\deg E_{ij}=\bar{i}-\bar{j}$. Consequently,  we obtain a 
$\ZZ_2^2\times\ZZ_2^2$-grading on $M_2(\HH)\simeq M_2(\FF)\ot\HH$, which is a refinement of 
the original $\ZZ_2\times\ZZ_2^2$-grading and is still preserved by the involution $*$ 
(cf. \cite[Theorem~3.30 and Remark~6.60]{EKmon}).
\end{example}

\subsection{Canonical almost fine refinement}

Given a grading $\Gamma$ on $\cA$, pick a maximal torus $T$ in $\Stab(\Gamma)$. Then the 
eigenspace decomposition of each homogeneous component $\cA_s$ with respect to the action of 
$T$ yields a refinement of $\Gamma$:
\begin{equation}\label{eq:def_GT}
	\cA=\kern -5pt\bigoplus_{(s,\lambda)\in S\times\frX(T)}\kern -10pt\cA_{(s,\lambda)}\quad
	\text{ with } \cA_{(s,\lambda)}\bydef\{a\in\cA_s\mid\tau(a)=\lambda(\tau)a\;\forall\tau\in T\},
\end{equation}
where $\frX(T)$ denotes the group of characters of $T$, i.e., the algebraic group homomorphisms from 
$T$ to the multiplicative group $\FF^\times$. We will denote this refinement by $\Gamma^*_T$. 
Clearly, if $\Gamma$ is a $G$-grading for some group $G$, then $\Gamma^*_T$ is a 
$G\times\frX(T)$-grading.

\begin{lemma}\label{lm:indep_of_torus}
	If $T$ and $T'$ are maximal tori of $\Stab(\Gamma)$, then the gradings $\Gamma^*_T$ and 
$\Gamma^*_{T'}$ are equivalent.
\end{lemma}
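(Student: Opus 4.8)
The plan is to reduce the statement to the conjugacy of maximal tori and then track carefully how the conjugating automorphism relabels the new degrees. Since $\Stab(\Gamma)$ is a (possibly disconnected) algebraic group and both $T$ and $T'$ are tori, each is contained in the identity component $\Stab(\Gamma)^\circ$, and all maximal tori of $\Stab(\Gamma)^\circ$ are conjugate (see e.g. \cite[\S 21.3]{Hum_lag}). Hence there is $g\in\Stab(\Gamma)$ with $T'=gTg^{-1}$. The key observation is that $g$, being an element of $\Stab(\Gamma)\subset\Aut(\cA)$, is an automorphism of $\cA$ that fixes each homogeneous component $\cA_s$ of $\Gamma$; I claim that $g$ carries the refinement $\Gamma^*_T$ onto $\Gamma^*_{T'}$, which is exactly an equivalence $(\cA,\Gamma^*_T)\to(\cA,\Gamma^*_{T'})$.

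To verify the claim, fix $s\in S$ and $\lambda\in\frX(T)$ and let $\cA_{(s,\lambda)}$ be the corresponding component of $\Gamma^*_T$ as in \eqref{eq:def_GT}. For $a\in\cA_{(s,\lambda)}$ we have $g(a)\in g(\cA_s)=\cA_s$, and for $\tau'\in T'$, writing $\tau'=g\tau g^{-1}$ with $\tau\in T$, we get $\tau'(g(a))=g(\tau(a))=\lambda(\tau)\,g(a)$. Defining $\lambda'\bydef\lambda\circ c_{g^{-1}}$, where $c_{g^{-1}}\colon T'\to T$, $\tau'\mapsto g^{-1}\tau'g$, is an isomorphism of algebraic groups, we obtain a character $\lambda'\in\frX(T')$ with $\tau'(g(a))=\lambda'(\tau')\,g(a)$ for all $\tau'\in T'$. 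Thus $g$ maps $\cA_{(s,\lambda)}$ into the $(s,\lambda')$-component of $\Gamma^*_{T'}$. Applying the same argument to $g^{-1}$, which conjugates $T'$ back to $T$, gives the reverse inclusion, so $g$ maps $\cA_{(s,\lambda)}$ \emph{onto} the $(s,\lambda')$-component of $\Gamma^*_{T'}$.

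Since $g$ is a bijection, it therefore maps the set of nonzero homogeneous components of $\Gamma^*_T$ onto that of $\Gamma^*_{T'}$, so the two gradings are equivalent. (If $\Gamma$ is a $G$-grading, then $\lambda\mapsto\lambda'$ is the isomorphism $\frX(T)\to\frX(T')$ dual to conjugation by $g$, and $g$ realizes the isomorphism of $(G\times\frX(T))$-graded algebras ${}^\gamma(\cA,\Gamma^*_T)\to(\cA,\Gamma^*_{T'})$ with $\gamma=\id_G\times(\lambda\mapsto\lambda')$.) The only point requiring a little care is the appeal to conjugacy of maximal tori in the non-connected group $\Stab(\Gamma)$ — handled by passing to $\Stab(\Gamma)^\circ$, which still contains every maximal torus — together with the observation that the conjugating element can be, and here automatically is, chosen inside $\Stab(\Gamma)$, so that it preserves $\Gamma$ itself and merely permutes the extra degrees contributed by $\frX(T)$.
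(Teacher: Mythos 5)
Your proof is correct and takes essentially the same approach as the paper: conjugate $T$ to $T'$ by an element $\vphi\in\Stab(\Gamma)$ (the paper likewise notes that such an element exists because $\FF$ is algebraically closed) and check that $\vphi$ sends $\cA_{(s,\lambda)}$ onto the component of $\Gamma^*_{T'}$ indexed by $(s,\lambda\circ\Int\vphi^{-1})$. Your extra remarks about passing to $\Stab(\Gamma)^\circ$ and the dual isomorphism of character groups are just a more explicit rendering of the same argument.
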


\begin{proof}
	Since $\FF$ is assumed to be algebraically closed, there exists $\vphi\in\Stab(\Gamma)$ such that 
$\vphi T\vphi^{-1}=T'$ or, in other words, $T'=(\Int\vphi)(T)$, where $\Int\vphi$ is the 
inner automorphism determined by $\vphi$. Thus we get an isomorphism 
$\hat{\vphi}\colon \frX(T')\to\frX(T)$ sending $\lambda'\mapsto\lambda'\circ\Int\vphi$, and it follows from
 the definition that $\vphi\big(\cA_{(s,\lambda)}\big)=\cA'_{(s,\hat{\vphi}^{-1}(\lambda))}$ for all 
$s\in S$, $\lambda\in\frX(T)$.
\end{proof}

\begin{lemma}\label{lm:GT}
	$\Diag(\Gamma^*_T)^\circ=T$ is a maximal torus in $\Stab(\Gamma^*_T)$. In particular, 
$\Gamma^*_T$ is an almost fine grading and $\trank(\Gamma^*_T)=\trank(\Gamma)$.
\end{lemma}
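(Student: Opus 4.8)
The plan is to prove the two inclusions $T\subseteq\Diag(\Gamma^*_T)^\circ$ and $\Diag(\Gamma^*_T)^\circ\subseteq T$, deduce from them that $T$ is a maximal torus of $\Stab(\Gamma^*_T)$, and then read off almost fineness and the equality of toral ranks. The first inclusion is immediate from the construction~\eqref{eq:def_GT}: by definition each $\tau\in T$ acts on the component $\cA_{(s,\lambda)}$ as the scalar $\lambda(\tau)$, so $\tau\in\Diag(\Gamma^*_T)$; since $T$ is connected, $T\subseteq\Diag(\Gamma^*_T)^\circ$.

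For the reverse inclusion, note that any $\psi\in\Diag(\Gamma^*_T)$ scales each $\cA_{(s,\lambda)}$ and therefore preserves every sum $\cA_s=\bigoplus_{\lambda\in\frX(T)}\cA_{(s,\lambda)}$; hence $\Diag(\Gamma^*_T)\subseteq\Stab(\Gamma)$. As recalled in \S\ref{sse:almost_fine}, $\Diag(\Gamma^*_T)$ is a quasitorus, so $\Diag(\Gamma^*_T)^\circ$ is a torus; being contained in $\Stab(\Gamma)$ and containing the maximal torus $T$, it must equal $T$. To see that $T$ is moreover a maximal torus of $\Stab(\Gamma^*_T)$, recall that $\Gamma^*_T$ is a refinement of $\Gamma$, so $\Stab(\Gamma^*_T)\subseteq\Stab(\Gamma)$, while $T\subseteq\Diag(\Gamma^*_T)\subseteq\Stab(\Gamma^*_T)$ by the above; then any torus $T'$ with $T\subseteq T'\subseteq\Stab(\Gamma^*_T)$ also satisfies $T\subseteq T'\subseteq\Stab(\Gamma)$, so $T'=T$ by maximality of $T$ in $\Stab(\Gamma)$.

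The remaining assertions are now formal. Since $\Diag(\Gamma^*_T)^\circ=T$ is a maximal torus of $\Stab(\Gamma^*_T)$, Definition~\ref{df:almost_fine} says exactly that $\Gamma^*_T$ is almost fine; and since all maximal tori of an algebraic group share the same dimension, $\trank(\Gamma^*_T)=\rank\Stab(\Gamma^*_T)=\dim T=\rank\Stab(\Gamma)=\trank(\Gamma)$. There is no real obstacle here beyond keeping the chain $\Diag(\Gamma^*_T)^\circ=T\subseteq\Diag(\Gamma^*_T)\subseteq\Stab(\Gamma^*_T)\subseteq\Stab(\Gamma)$ straight --- passing to a refinement can only shrink the stabilizer while enlarging $\Diag$ --- together with the standard fact that the identity component of a diagonalizable group is a torus; the one substantive point is that $T$ genuinely lies inside $\Diag(\Gamma^*_T)$, which is precisely what makes $\Gamma^*_T$, rather than merely $\Gamma$, almost fine.
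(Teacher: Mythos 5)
Your proof is correct and follows essentially the same route as the paper's: establish the chain $T\subseteq\Diag(\Gamma^*_T)^\circ\subseteq\Stab(\Gamma^*_T)\subseteq\Stab(\Gamma)$ and invoke the maximality of $T$ in $\Stab(\Gamma)$ to get both the equality $\Diag(\Gamma^*_T)^\circ=T$ and its maximality in the smaller group. The extra detail you supply (connectedness of $T$, the direct verification that $\Diag(\Gamma^*_T)$ preserves each $\cA_s$) is sound but not a different argument.
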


\begin{proof}
	By definition, every element $\tau\in T$ acts as the scalar $\lambda(\tau)$ on 
$\cA_{(s,\lambda)}$, so we have $T\subset\Diag(\Gamma^*_T)^\circ\subset\Stab(\Gamma^*_T)$. 
Since $\Gamma^*_T$ is a refinement of $\Gamma$, we also have 
$\Stab(\Gamma^*_T)\subset\Stab(\Gamma)$. By the maximality of the torus $T$ in $\Stab(\Gamma)$, 
we conclude that $T=\Diag(\Gamma^*_T)^\circ$ is a maximal torus in $\Stab(\Gamma^*_T)$.
\end{proof}

\begin{corollary}\label{co:trank_max}
	For any grading $\Gamma$, $\trank(\Gamma)$ is the maximum of $\dim\Diag(\Gamma')$'s over 
all refinements $\Gamma'$ of $\Gamma$. If $\Gamma$ is a group (respectively, abelian group) 
grading, then this maximum is attained among group (respectively, abelian group) gradings.
\end{corollary}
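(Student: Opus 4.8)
The plan is to combine a straightforward upper bound, valid for every refinement, with the explicit construction $\Gamma^*_T$ of Lemma~\ref{lm:GT}, which attains it.

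\emph{Upper bound.} First I would show $\dim\Diag(\Gamma')\le\trank(\Gamma)$ for an arbitrary refinement $\Gamma'$ of $\Gamma$. Since each homogeneous component of $\Gamma$ is a direct sum of components of $\Gamma'$, any degree-preserving automorphism of $(\cA,\Gamma')$ preserves each component of $\Gamma$ as well; that is, $\Stab(\Gamma')\subset\Stab(\Gamma)$. Now $\Diag(\Gamma')^\circ$ is a torus contained in $\Stab(\Gamma')$, hence in $\Stab(\Gamma)$, so its dimension is at most the rank of $\Stab(\Gamma)$. Since a quasitorus has the dimension of its identity component, $\dim\Diag(\Gamma')=\dim\Diag(\Gamma')^\circ\le\rank\Stab(\Gamma)=\trank(\Gamma)$.

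\emph{Attainment.} Next I would exhibit a refinement realizing the bound. Fix a maximal torus $T$ of $\Stab(\Gamma)$, so $\dim T=\trank(\Gamma)$. By Lemma~\ref{lm:GT}, the refinement $\Gamma^*_T$ of $\Gamma$ defined in~\eqref{eq:def_GT} satisfies $\Diag(\Gamma^*_T)^\circ=T$, whence $\dim\Diag(\Gamma^*_T)=\dim T=\trank(\Gamma)$. This establishes the first assertion. For the parenthetical refinements: if $\Gamma$ is a group grading, realize it over a group $G$; as observed just before Lemma~\ref{lm:GT}, $\Gamma^*_T$ is then a $G\times\frX(T)$-grading, and $G\times\frX(T)$ is a group, so $\Gamma^*_T$ is a group grading attaining the maximum. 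If moreover $G$ is abelian, then so is $G\times\frX(T)$ (note $\frX(T)$ is free abelian of rank $\dim T$), and $\Gamma^*_T$ is an abelian group grading attaining the maximum.

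This is essentially a bookkeeping corollary: it assembles Lemma~\ref{lm:GT} with the monotonicity $\Stab(\Gamma')\subset\Stab(\Gamma)$ of stabilizers under refinement. There is no real obstacle; the only point that wants a word of care is the chain of (in)equalities $\dim\Diag(\Gamma')=\dim\Diag(\Gamma')^\circ\le\rank\Stab(\Gamma)=\trank(\Gamma)$, which relies on the facts that a quasitorus and its identity component have the same dimension and that a torus sitting inside an algebraic group has dimension at most that group's rank.
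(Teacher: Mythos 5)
Your proof is correct and follows essentially the same route as the paper: the containment $\Diag(\Gamma')\subset\Stab(\Gamma')\subset\Stab(\Gamma)$ gives the upper bound, and Lemma~\ref{lm:GT} applied to $\Gamma^*_T$ shows it is attained. You simply spell out the group/abelian-group refinement via the $G\times\frX(T)$-grading, which the paper leaves implicit from the remark preceding Lemma~\ref{lm:GT}.
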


\begin{proof}
	If $\Gamma'$ is a refinement of $\Gamma$, then 
$\Diag(\Gamma')\subset\Stab(\Gamma')\subset\Stab(\Gamma)$. Since $\Diag(\Gamma')^\circ$ is a 
torus, we get $\dim\Diag(\Gamma')\le\trank(\Gamma)$. The result now follows by Lemma~\ref{lm:GT}.
\end{proof}

The last two lemmas justify the following terminology:

\begin{df}
	For any maximal torus $T\subset\Stab(\Gamma)$, the refinement $\Gamma^*_T$ will be called 
the \emph{canonical almost fine refinement of $\Gamma$}.
\end{df}

The following is an abstract characterization of $\Gamma^*_T$ among refinements of $\Gamma$.

\begin{proposition}\label{prop:characterize_can_refinement}
	The following are equivalent for a refinement $\Gamma'$ of $\Gamma$:
	\begin{enumerate}
		\item[(i)] $\Gamma'$ is almost fine and $\trank(\Gamma')=\trank(\Gamma)$;
		\item[(ii)] $\Gamma'$ is a refinement of $\Gamma^*_T$ for some $T$.
	\end{enumerate}
\end{proposition}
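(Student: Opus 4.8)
The plan is to prove both implications directly, using the structural results on canonical almost fine refinements from Lemmas~\ref{lm:indep_of_torus} and \ref{lm:GT} together with the monotonicity of $\Stab$ and $\trank$ under refinement.

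First I would prove (ii)~$\Rightarrow$~(i). Suppose $\Gamma'$ is a refinement of $\Gamma^*_T$ for some maximal torus $T$ of $\Stab(\Gamma)$. By Lemma~\ref{lm:GT}, $\Gamma^*_T$ is almost fine with $\trank(\Gamma^*_T)=\trank(\Gamma)$. Since $\Gamma'$ refines an almost fine grading, Proposition~\ref{re:refinement} gives that $\Gamma'$ is almost fine and $\trank(\Gamma')=\trank(\Gamma^*_T)=\trank(\Gamma)$. This direction is immediate.

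For (i)~$\Rightarrow$~(ii), assume $\Gamma'$ is a refinement of $\Gamma$ that is almost fine with $\trank(\Gamma')=\trank(\Gamma)$. The key point is to locate a suitable maximal torus $T$ inside $\Stab(\Gamma)$. Since $\Gamma'$ is almost fine, $\Diag(\Gamma')^\circ$ is a maximal torus in $\Stab(\Gamma')$; call it $T$. Because $\Gamma'$ refines $\Gamma$, we have $\Stab(\Gamma')\subset\Stab(\Gamma)$, so $T$ is a torus in $\Stab(\Gamma)$, and
\[
\dim T=\rank(U_\ab(\Gamma'))=\trank(\Gamma')=\trank(\Gamma),
\]
which is the maximal dimension of a torus in $\Stab(\Gamma)$. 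Hence $T$ is a maximal torus of $\Stab(\Gamma)$, and $\Gamma^*_T$ is defined. It remains to show $\Gamma'$ is a refinement of $\Gamma^*_T$, i.e., each homogeneous component of $\Gamma'$ is contained in a component of $\Gamma^*_T$. Fix a nonzero homogeneous component $\cA'_{t}$ of $\Gamma'$. Since $\Gamma'$ refines $\Gamma$, we have $\cA'_t\subset\cA_s$ for some $s\in S$. Since $T=\Diag(\Gamma')^\circ\subset\Stab(\Gamma')$ acts on $\cA'_t$ by scalars (each element of $\Diag(\Gamma')$ acts by a scalar on every component of $\Gamma'$, in particular on $\cA'_t$), the subspace $\cA'_t$ lies in a single eigenspace of $T$, hence in a single component $\cA_{(s,\lambda)}$ of $\Gamma^*_T$ as defined by \eqref{eq:def_GT}. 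Therefore $\Gamma'$ refines $\Gamma^*_T$, proving (ii).

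The main obstacle is purely bookkeeping: making sure that the torus $T$ extracted from $\Gamma'$ is genuinely a maximal torus of $\Stab(\Gamma)$ (not just of $\Stab(\Gamma')$), which is exactly where the hypothesis $\trank(\Gamma')=\trank(\Gamma)$ is used, together with the fact established in Corollary~\ref{co:trank_max} that $\trank(\Gamma)$ is the maximal dimension of a torus in $\Stab(\Gamma)$. Once that is in place, the containment $\cA'_t\subset\cA_{(s,\lambda)}$ follows from the observation that $T$ acts by scalars on each $\Gamma'$-component because $T\subset\Diag(\Gamma')$.
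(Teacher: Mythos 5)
Your proof is correct and follows essentially the same route as the paper's: (ii)$\Rightarrow$(i) via Lemma~\ref{lm:GT} and Proposition~\ref{re:refinement}, and (i)$\Rightarrow$(ii) by taking $T=\Diag(\Gamma')^\circ$, using the equality of toral ranks to see it is a maximal torus of $\Stab(\Gamma)$, and then noting that $T$ acts by scalars on each component of $\Gamma'$. (The only quibble is that the maximality of $\dim T$ among tori in $\Stab(\Gamma)$ is just the definition of $\trank(\Gamma)$ as the rank of $\Stab(\Gamma)$, rather than Corollary~\ref{co:trank_max}.)
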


\begin{proof}
	If $\Gamma'$ is a refinement of $\Gamma^*_T$, then $\Gamma'$ satisfies (i) by 
Lemma~\ref{lm:GT} and Proposition~\ref{re:refinement}. Conversely, if $\Gamma'$ satisfies (i), 
then $T\bydef\Diag(\Gamma')^\circ$ is a maximal torus in $\Stab(\Gamma')$ and, hence, in 
$\Stab(\Gamma)$, since $\Stab(\Gamma)$ and its subgroup $\Stab(\Gamma')$ have the same rank 
by hypothesis. But the elements of $T$ act as scalars on each component of $\Gamma'$, so
 $\Gamma'$ must be a refinement of $\Gamma^*_T$.
\end{proof}

%--------------------------
\section{Classification of group gradings up to isomorphism}\label{se:classification_iso}

We will now show how a classification of almost fine group gradings on $\cA$ up to equivalence can 
be used to obtain, for any group $G$, a classification of $G$-gradings on $\cA$ up to isomorphism. 

Let $\Gamma$ be a $G$-grading on $\cA$. As discussed in the introduction, $\Gamma$ can be 
obtained from a fine group grading $\Delta$ by a homomorphism $\alpha\colon U(\Delta)\to G$, but neither 
$\Delta$ nor $\alpha$ is unique. To remedy the situation, we restrict the class of homomorphisms 
$\alpha$ that we are going to use, and this forces us to extend the class of gradings from which we 
will take $\Delta$ by allowing $\Delta$ to be almost fine.

\begin{df}\label{df:admissible}
	Let $\Delta$ be an almost fine group grading on $\cA$, $U=U(\Delta)$, $U_\ab=U_\ab(\Delta)$, 
and let $\pi_\Delta\colon U\to U_\ab/t(U_\ab)$ be the composition of the natural homomorphisms 
$U\to U_\ab\to U_\ab/t(U_\ab)$ . A group homomorphism $\alpha\colon U\to G$ is said to be \emph{admissible} if the restriction of the homomorphism 
$(\alpha,\pi_\Delta)\colon U\to G\times U_\ab/t(U_\ab)$ to the support of $\Delta$ is injective.
\end{df}

In the abelian case, i.e., if $G$ is an abelian group and $\Delta$ is an abelian group grading, 
the restriction of the natural homomorphism  $\pi_\ab\colon U\to U_\ab$ to the support of $\Delta$ is 
injective and any homomorphism $\alpha\colon U\to G$ is the composition of  $\pi_\ab$ and a 
(unique) homomorphism $\alpha'\colon U_\ab\to G$. 
Hence, the condition in Definition~\ref{df:admissible} reduces to the following: the restriction of 
$(\alpha',\pi'_\Delta)$ to the support of $\Delta$ is injective, where $\pi'_\Delta$ is the 
natural homomorphism $U_\ab\to U_\ab/t(U_\ab)$. We will say that $\alpha'$ is \emph{admissible} 
if this is satisfied.

\begin{lemma}\label{lm:admissible}
	Let $\Delta$ be an almost fine group grading (respectively, abelian group grading) on $\cA$ and 
let $G$ be a group (respectively, abelian group).
	Denote $T=\Diag(\Delta)^\circ$. Let $\Gamma={}^\alpha\Delta$ be the $G$-grading induced by 
a homomorphism $\alpha\colon U(\Delta)\to G$ (respectively, $\alpha\colon U_\ab(\Delta)\to G$) . Then the 
following are equivalent: 
	\begin{enumerate}
		\item[(i)] $\alpha$ is admissible;
		\item[(ii)] $T$ is a maximal torus in $\Stab(\Gamma)$ and the set of nonzero
 homogeneous components of $\Gamma^*_T$ coincides with that of $\Delta$ (in particular, these 
gradings are equivalent).
	\end{enumerate}
\end{lemma}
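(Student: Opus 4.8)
The plan is to prove the two implications separately, working throughout with the refinement $\Gamma^*_T$ attached to the torus $T=\Diag(\Delta)^\circ$, and to keep the abelian case in mind as a notational simplification of the general one (where $\alpha$ factors through $U_\ab$ and admissibility becomes injectivity of $(\alpha',\pi'_\Delta)$ on the support). The key observation to set up first is the following description of $\Gamma^*_T$: since $\Delta$ is almost fine, $T$ is a maximal torus in $\Stab(\Delta)$ and, by definition of $\Diag(\Delta)$, every element $\tau\in T$ acts as a scalar $\mu(\tau)$ on the $\Delta$-component $\cA_u$ of degree $u$, where $\mu=\mu(u)\in\frX(T)$ is the image of $u$ under the natural map $U(\Delta)\to U_\ab(\Delta)\to U_\ab/t(U_\ab)\cong\frX(T)$ — i.e.\ $\mu(u)=\pi_\Delta(u)$ after identifying $\frX(T)$ with $\Hom(U_\ab/t(U_\ab),\FF^\times)^\vee$. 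So on the component $\Gamma_g=\bigoplus_{u\in\alpha^{-1}(g)}\cA_u$ of $\Gamma={}^\alpha\Delta$, the torus $T$ acts with eigenspace decomposition $\Gamma_g=\bigoplus_\lambda\big(\bigoplus_{u\in\alpha^{-1}(g),\,\pi_\Delta(u)=\lambda}\cA_u\big)$. Consequently the $\Gamma^*_T$-component of degree $(g,\lambda)$ is $\bigoplus\{\cA_u : \alpha(u)=g,\ \pi_\Delta(u)=\lambda\}$, and this is exactly the sum of those $\Delta$-components whose degree $u$ maps to $(g,\lambda)$ under $(\alpha,\pi_\Delta)$.

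For (i) $\Rightarrow$ (ii): assume $(\alpha,\pi_\Delta)$ is injective on the support $S$ of $\Delta$. By the description above, each nonzero $\Gamma^*_T$-component is a single $\cA_u$ (the fibers of $(\alpha,\pi_\Delta)|_S$ are singletons), so the nonzero components of $\Gamma^*_T$ coincide with those of $\Delta$; in particular $\Gamma^*_T$ and $\Delta$ are equivalent gradings. It remains to check that $T$ is a maximal torus in $\Stab(\Gamma)$. We always have $T=\Diag(\Delta)^\circ\subset\Stab(\Delta)\subset\Stab(\Gamma)$ since $\Gamma$ is a coarsening of $\Delta$ (recall $\Stab$ is inclusion-reversing for refinements). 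Now $\Stab(\Gamma^*_T)=\Stab(\Delta)$ because the two gradings have the same components, and by Lemma~\ref{lm:GT} $T=\Diag(\Gamma^*_T)^\circ$ is a maximal torus in $\Stab(\Gamma^*_T)=\Stab(\Delta)$. Finally, any torus $T'$ with $T\subset T'\subset\Stab(\Gamma)$ acts on each $\Gamma$-component and hence refines $\Gamma$ to some grading $\Gamma'$ with $T'\subset\Diag(\Gamma')^\circ$; since $T\subset T'$ acts by scalars on each $\cA_u$, the grading $\Gamma'$ is coarser than $\Delta$, so $\Stab(\Delta)\subset\Stab(\Gamma')$ and $T'\subset\Stab(\Delta)$, whence $T'=T$ by maximality of $T$ in $\Stab(\Delta)$. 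Thus $T$ is maximal in $\Stab(\Gamma)$, giving (ii).

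For (ii) $\Rightarrow$ (i): assume $T$ is a maximal torus in $\Stab(\Gamma)$ and the nonzero components of $\Gamma^*_T$ are exactly those of $\Delta$. Since $T$ is a maximal torus of $\Stab(\Gamma)$, the grading $\Gamma^*_T$ is (by definition) the canonical almost fine refinement of $\Gamma$ built from this particular torus, and its nonzero components are the $T$-eigenspaces inside the $\Gamma$-components, which by the computation above are the nonzero subspaces $\bigoplus\{\cA_u : (\alpha,\pi_\Delta)(u)=(g,\lambda)\}$. The hypothesis that these coincide with the single $\Delta$-components $\cA_u$ forces each such fiber of $(\alpha,\pi_\Delta)|_S$ to be a singleton — indeed if $u\ne u'$ lay in the same fiber, the corresponding $\Gamma^*_T$-component would contain $\cA_u\oplus\cA_{u'}$ and hence not be a $\Delta$-component. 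Therefore $(\alpha,\pi_\Delta)$ is injective on $S$, i.e.\ $\alpha$ is admissible.

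I expect the main obstacle to be the bookkeeping identification of $\frX(T)$ with $U_\ab(\Delta)/t(U_\ab(\Delta))$ and the verification that the $T$-eigenvalue of the component $\cA_u$ is precisely the image $\pi_\Delta(u)$: this uses that $\Delta$ is almost fine (so $T=\Diag(\Delta)^\circ$ really is the full maximal torus, not a proper subtorus), together with the isomorphism $\eta_\Delta\colon U_\ab(\Delta)^D\to\Diags(\Delta)$ from \S\ref{sse:grad_act} restricted to the connected (torus) parts. Everything else is formal manipulation of refinements, coarsenings, and the inclusion-reversing behaviour of $\Stab$, plus the maximality-of-torus argument, which is the same kind of reasoning already used in the proofs of Proposition~\ref{re:refinement} and Lemma~\ref{lm:GT}.
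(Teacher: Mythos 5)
Your proposal follows essentially the same route as the paper: identify the $T$-eigenvalue on $\cA_u$ with $\pi_\Delta(u)$ via $\frX(T)\cong U_\ab/t(U_\ab)$, observe that the $T$-eigenspace refinement of $\Gamma$ is exactly ${}^{(\alpha,\pi_\Delta)}\Delta$ (the paper calls this $\Gamma'$ and only rebaptizes it $\Gamma^*_T$ once maximality of $T$ is known), and read off both implications from whether the fibers of $(\alpha,\pi_\Delta)|_S$ are singletons. The direction (ii)~$\Rightarrow$~(i) and the first half of (i)~$\Rightarrow$~(ii) are fine. However, your final maximality step is written backwards and, as stated, does not close. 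For a torus $T'$ with $T\subset T'\subset\Stab(\Gamma)$, the $T'$-eigenspace decomposition $\Gamma'$ of the $\Gamma$-components is a \emph{refinement} of $\Delta$, not a coarsening: $T'$ is abelian, hence commutes with $T$ and preserves the $T$-eigenspaces inside each $\Gamma$-component, and under (i) these are precisely the $\cA_u$. Consequently the correct inclusion is $\Stab(\Gamma')\subset\Stab(\Delta)$, and it is \emph{this} inclusion that yields $T'\subset\Diag(\Gamma')\subset\Stab(\Gamma')\subset\Stab(\Delta)$, whence $T'=T$ by maximality of $T=\Diag(\Delta)^\circ$ in $\Stab(\Delta)$ (almost fineness of $\Delta$). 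Your written chain ``$\Gamma'$ is coarser than $\Delta$, so $\Stab(\Delta)\subset\Stab(\Gamma')$ and $T'\subset\Stab(\Delta)$'' is a non sequitur: the displayed inclusion points the wrong way and does not imply $T'\subset\Stab(\Delta)$. The paper packages the same idea as the identity $\Stab(\Gamma')=\Cent_{\Stab(\Gamma)}(T)$, in which $T$ is maximal; with the direction corrected, your argument coincides with it.
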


\begin{proof}
	Recall from Subsection \ref{sse:almost_fine} that the torus $T=\Diag(\Delta)^\circ$ is isomorphic 
to the group of characters of $U_\ab/t(U_\ab)$ where $U_\ab=U_\ab(\Delta)$. Hence, we obtain 
an evaluation homomorphism $\veps\colon U=U(\Delta)\to\frX(T)$, which is  the composition of 
$\pi_\Delta$ and the isomorphism $U_\ab/t(U_\ab)\to\frX(T)$. Explicitly, for any $s$ in the support of 
$\Delta$, $\veps(s)$ is the character of $T$ that maps each $\tau\in T$ to the scalar by which $\tau$ 
acts on the component $\cA_s$ of $\Delta$. It follows that the induced $G\times\frX(T)$-grading 
${}^{(\alpha,\veps)}\Delta$ coincides with the $G\times\frX(T)$-grading $\Gamma'$ obtained from 
$\Gamma$ by decomposing each of its components into eigenspaces with respect to the action 
of $T\subset\Stab(\Gamma)$.
	
	Now, if (ii) holds, then $\Gamma'=\Gamma^*_T$ by definition and, hence, the coarsening 
${}^{(\alpha,\veps)}\Delta$ of $\Delta$ is not proper, so the restriction of $(\alpha,\veps)$ to 
the support of $\Delta$ is injective. Since $\veps$ is the composition of $\pi_\Delta$ and an 
isomorphism, we get (i).
	
	Conversely, assume (i). Then the restriction of $(\alpha,\veps)$ to the support of $\Delta$ is 
injective, so $\Gamma'={}^{(\alpha,\veps)}\Delta$ has the same nonzero homogeneous components as 
$\Delta$ and, hence, $\Stab(\Gamma')=\Stab(\Delta)$. But 
$\Stab(\Gamma')=\Cent_{\Stab(\Gamma)}(T)$, so $T$ is a maximal torus in 
$\Cent_{\Stab(\Gamma)}(T)$, since $\Delta$ is almost fine. It follows that $T$ is a maximal torus in 
$\Stab(\Gamma)$. Since $\Gamma'=\Gamma^*_T$, we see that (ii) holds.
\end{proof}

\begin{theorem}\label{th:1}
Let $\{\Gamma_i\}_{i\in I}$ be a set of representatives of the equivalence classes of almost fine 
group (respectively, abelian group) gradings on $\cA$. For any group (respectively, abelian group) $G$ 
and a $G$-grading $\Gamma$ on $\cA$, there exists a unique $i\in I$ such that $\Gamma$ is 
isomorphic to the induced grading ${}^\alpha\Gamma_i$ for some admissible homomorphism 
$\alpha\colon U(\Gamma_i)\to G$ (respectively, $\alpha\colon U_\ab(\Gamma_i)\to G$). Moreover, two 
such homomorphisms, $\alpha$ and $\alpha'$, induce isomorphic $G$-gradings if and only if there 
exists $w\in W(\Gamma_i)$ such that $\alpha=\alpha'\circ w$.
\end{theorem}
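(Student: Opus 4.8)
The plan is to prove the three assertions of Theorem~\ref{th:1} in turn: (1) existence of $i$ and an admissible $\alpha$; (2) uniqueness of $i$; (3) the Weyl-group parametrization of admissible homomorphisms inducing isomorphic gradings. Throughout I would work with the canonical almost fine refinement $\Gamma^*_T$ of the given $G$-grading $\Gamma$, pinning down a maximal torus $T$ of $\Stab(\Gamma)$, and use Lemma~\ref{lm:admissible} as the bridge between admissibility of a homomorphism and the statement that $\Gamma^*_T$ recovers a fixed almost fine grading.

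For \textbf{existence}: given a $G$-grading $\Gamma$ on $\cA$, choose a maximal torus $T\subset\Stab(\Gamma)$ and form $\Gamma^*_T$, which by Lemma~\ref{lm:GT} is almost fine (and a group, resp.\ abelian group, grading if $\Gamma$ is). Hence $\Gamma^*_T$ is equivalent to $\Gamma_i$ for a unique $i\in I$. Via this equivalence I get an isomorphism $\gamma$ of universal groups $U(\Gamma_i)\to U(\Gamma^*_T)$ (cf.\ \S\ref{sse:equivalence}), and the natural projection $U(\Gamma^*_T)\to G$ (which exists because $\Gamma$ is a coarsening of $\Gamma^*_T$, and $U(\Gamma^*_T)$ is universal) composed with $\gamma$ gives $\alpha\colon U(\Gamma_i)\to G$ with ${}^\alpha\Gamma_i\cong\Gamma$. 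That $\alpha$ is admissible is exactly the content of (ii)$\Rightarrow$(i) in Lemma~\ref{lm:admissible}: $T=\Diag(\Gamma_i)^\circ$ transported to $\Stab(\Gamma)$ is maximal and the nonzero components of $(\Gamma)^*_T$ match those of $\Gamma_i$. In the abelian case one replaces $U$ by $U_\ab$ everywhere, as in Definition~\ref{df:admissible}.

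For \textbf{uniqueness of $i$} and the \textbf{Weyl-group statement}, which I expect to be the main obstacle: suppose ${}^\alpha\Gamma_i\cong\Gamma\cong{}^{\alpha'}\Gamma_j$ with $\alpha,\alpha'$ admissible. Composing with the algebra automorphism realizing the isomorphism $\Gamma\to{}^{\alpha'}\Gamma_j$, I may assume $\Gamma={}^{\alpha'}\Gamma_j$ on the nose, so $\Gamma_j$ is literally a refinement of $\Gamma$. By Lemma~\ref{lm:admissible}(i)$\Rightarrow$(ii), $T_j:=\Diag(\Gamma_j)^\circ$ is a maximal torus of $\Stab(\Gamma)$ and $\Gamma^*_{T_j}$ has the same nonzero components as $\Gamma_j$; likewise, after applying the automorphism that intertwines ${}^\alpha\Gamma_i$ with $\Gamma$, we get $T_i:=\Diag(\Gamma_i)^\circ$ maximal in $\Stab(\Gamma)$ with $\Gamma^*_{T_i}$ having the same nonzero components as $\Gamma_i$. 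Since all maximal tori of $\Stab(\Gamma)$ are conjugate, Lemma~\ref{lm:indep_of_torus} gives an equivalence $\Gamma^*_{T_i}\sim\Gamma^*_{T_j}$, hence $\Gamma_i\sim\Gamma_j$, hence $i=j$ by the choice of representatives. With $i=j$ fixed, any two admissible $\alpha,\alpha'\colon U(\Gamma_i)\to G$ inducing isomorphic $G$-gradings differ as follows: let $\psi\in\Aut(\cA)$ satisfy $\psi\colon{}^\alpha\Gamma_i\to{}^{\alpha'}\Gamma_i$ isomorphism of $G$-graded algebras. Then $\psi$ permutes the nonzero components of $\Gamma_i$ (these coincide with those of $\Gamma^*_{T}$ for the relevant torus, independent of the labels), so $\psi\in\Aut(\Gamma_i)$ and induces $w\in W(\Gamma_i)\subset\Aut(U(\Gamma_i))$; tracking degrees, the condition ``$\psi$ respects the $G$-degree'' translates precisely into $\alpha=\alpha'\circ w$. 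Conversely, if $\alpha=\alpha'\circ w$ for $w\in W(\Gamma_i)$, lift $w$ to $\psi\in\Aut(\Gamma_i)$; then $\psi$ carries ${}^\alpha\Gamma_i$ to ${}^{\alpha'}\Gamma_i$ degree by degree, giving the required $G$-graded isomorphism.

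The delicate point — and where I would be most careful — is the bookkeeping in the last paragraph: one must check that the automorphism $\psi$ realizing the $G$-graded isomorphism ${}^\alpha\Gamma_i\cong{}^{\alpha'}\Gamma_i$ genuinely lies in $\Aut(\Gamma_i)$ (i.e.\ permutes the \emph{fine-level} components, not merely the $G$-components), which relies on admissibility of both $\alpha$ and $\alpha'$ forcing the components of $\Gamma_i$ to be recoverable intrinsically from $\Gamma$ together with the $T$-eigenspace decomposition, via Lemma~\ref{lm:admissible}; and then that the induced automorphism $w$ of $U(\Gamma_i)$ satisfies the degree-compatibility $\alpha=\alpha'\circ w$ rather than some twisted version. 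The abelian case is handled by the same argument with $U_\ab$ in place of $U$, using the remark after Definition~\ref{df:admissible} that in that setting every homomorphism out of $U$ factors uniquely through $U_\ab$.
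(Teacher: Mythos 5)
Your overall strategy coincides with the paper's: construct the canonical almost fine refinement $\Gamma^*_T$, use Lemma~\ref{lm:GT} and the equivalence with some $\Gamma_i$ to produce $\alpha$, certify admissibility via Lemma~\ref{lm:admissible}, and get uniqueness of $i$ from the conjugacy of maximal tori of $\Stab(\Gamma)$ together with Lemma~\ref{lm:indep_of_torus}. The existence and uniqueness-of-$i$ parts are correct as you outline them (modulo the harmless abuse of writing $T_i=\Diag(\Gamma_i)^\circ$ for what is really a conjugate of it sitting inside $\Stab(\Gamma)$), and your explicit converse direction of the Weyl-group statement is fine.

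The gap is exactly at the point you flagged, and your proposed way through it does not work as stated. You assert that the automorphism $\psi$ realizing ${}^\alpha\Gamma_i\cong{}^{\alpha'}\Gamma_i$ ``permutes the nonzero components of $\Gamma_i$'' because those components are recoverable from the $G$-grading together with the $T$-eigenspace decomposition. But that recovery depends on the choice of maximal torus: admissibility of $\alpha$ and $\alpha'$ gives you the \emph{same} torus $T=\Diag(\Gamma_i)^\circ$ on both sides, whereas $\psi$ carries $T$-eigenspaces to $\psi T\psi^{-1}$-eigenspaces. Hence $\psi$ maps the components of $({}^\alpha\Gamma_i)^*_T$ (which are those of $\Gamma_i$) onto the components of $({}^{\alpha'}\Gamma_i)^*_{\psi T\psi^{-1}}$, which are only \emph{equivalent} to, not equal to, the components of $\Gamma_i$; so the given $\psi$ need not lie in $\Aut(\Gamma_i)$ at all. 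The missing step (which is how the paper proceeds) is to note that $T$ and $\psi^{-1}T\psi$ are both maximal tori of $\Stab({}^\alpha\Gamma_i)$, hence conjugate by some $\sigma\in\Stab({}^\alpha\Gamma_i)$, and to replace $\psi$ by $\psi\sigma$; this does not change the induced $G$-graded isomorphism ${}^\alpha\Gamma_i\to{}^{\alpha'}\Gamma_i$ but makes the new automorphism normalize $T$, and only then does it permute the components of $\Gamma_i$ and define $w\in W(\Gamma_i)$. After that correction, your degree bookkeeping yielding $\alpha=\alpha'\circ w$ is exactly the paper's.
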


\begin{proof}
Consider the canonical almost fine refinement $\Gamma^*=\Gamma^*_T$, for some maximal 
torus $T\subset\Stab(\Gamma)$. Since $\Gamma^*$ is equivalent to some $\Gamma_i$, there exists 
an automorphism $\vphi$ of $\cA$ that moves the set of nonzero homogeneous components of 
$\Gamma^*$ onto that of $\Gamma_i$. Hence, $\varphi(\Gamma)$ is a coarsening of $\Gamma_i$, 
so there exists a homomorphism $\alpha\colon U(\Gamma_i)\to G$ such that 
$\vphi(\Gamma)={}^\alpha\Gamma_i$. Since $T=\Diag(\Gamma^*)^\circ$ by Lemma~\ref{lm:GT}, 
we have $\vphi T\vphi^{-1}=\Diag(\Gamma_i)^\circ$ and can apply Lemma~\ref{lm:admissible}, with 
$\Delta=\Gamma_i$, to conclude that $\alpha$ is admissible.

Now, suppose that $\alpha\colon U(\Gamma_i)\to G$ and $\alpha'\colon U(\Gamma_j)\to G$ are 
admissible homomorphisms such that the induced $G$-gradings ${}^{\alpha}\Gamma_i$ and 
${}^{\alpha'}\Gamma_j$ are isomorphic, i.e., there exists $\vphi\in\Aut(\cA)$ such that 
$\vphi({}^{\alpha}\Gamma_i)={}^{\alpha'}\Gamma_j$. In particular, we have  
$\vphi\Stab({}^{\alpha}\Gamma_i)\vphi^{-1}=\Stab({}^{\alpha'}\Gamma_j)$. 
Let $T=\Diag(\Gamma_i)^\circ$ and $T'=\vphi^{-1}\Diag(\Gamma_j)^\circ\vphi$. 
Applying Lemma~\ref{lm:admissible} to $\Delta=\Gamma_i$ and to $\Delta=\Gamma_j$, we see that 
$T$ and $T'$ are maximal tori of $\Stab({}^{\alpha}\Gamma_i)$, $\Gamma_i$ is equivalent to 
$({}^{\alpha}\Gamma_i)^*_T$, and $\Gamma_j$ is equivalent to 
$({}^{\alpha}\Gamma_i)^*_{T'}$. But $({}^{\alpha}\Gamma_i)^*_T$ and 
$({}^{\alpha}\Gamma_i)^*_{T'}$ are equivalent by Lemma~\ref{lm:indep_of_torus}, so 
$\Gamma_i$ and $\Gamma_j$ are equivalent, which forces $i=j$.

Finally, since $T$ and $T'=\vphi^{-1} T\vphi$ are maximal tori of $\Stab({}^{\alpha}\Gamma_i)$, 
there exists $\psi\in\Stab({}^{\alpha}\Gamma_i)$ such that $T'=\psi T\psi^{-1}$. Replacing $\vphi$ 
by the composition $\vphi\psi$, we get  $T=\vphi^{-1} T\vphi$ and still 
$\vphi({}^{\alpha}\Gamma_i)={}^{\alpha'}\Gamma_i$. But then $\vphi$ moves the set of 
nonzero homogeneous components of $({}^{\alpha}\Gamma_i)^*_T$ onto that of 
$({}^{\alpha'}\Gamma_i)^*_T$. Since, by Lemma~\ref{lm:admissible}, these sets are both equal to 
the set of nonzero homogeneous components of $\Gamma_i$, we see that $\vphi\in\Aut(\Gamma_i)$ 
and, hence, $\vphi$ determines an element $w\in W(\Gamma_i)$ by $\vphi(\cA_s)=\cA_{w(s)}$ for all 
$s$ in the support $S$ of $\Gamma_i$. Now, the homogeneous component of degree $g\in G$ in 
the grading ${}^{\alpha}\Gamma_i$ is, by definition, the direct sum 
$\bigoplus_{s\in\alpha^{-1}(g)}\cA_s$, whereas in ${}^{\alpha'}\Gamma_i$ it is 
$\bigoplus_{s'\in\alpha'^{-1}(g)}\cA_{s'}$. Since $\vphi$ moves the former to the latter, we 
conclude that, for any $s\in S$, $\vphi(\cA_s)\subset\bigoplus_{s'\in\alpha'^{-1}(\alpha(s))}\cA_{s'}$ 
and, hence, $w(s)\in\alpha'^{-1}(\alpha(s))$. This implies $\alpha'(w(s))=\alpha(s)$ for all $s\in S$, 
so $\alpha=\alpha'\circ w$.
\end{proof}

\begin{remark}\label{re:af_weyl}
Theorem \ref{th:1} reduces the problem of classification of $G$-gradings up to isomorphism to 
the problems
of classifying almost fine gradings up to equivalence and of describing their Weyl groups, as subgroups
of automorphisms of their universal groups.
Weyl groups are important invariants reflecting the symmetries of gradings, so their computation 
is of independent interest
and usually far from trivial (see \cite{EKmon} and references therein).
\end{remark}

%--------------------------
\section{From fine to almost fine gradings}\label{se:fine_to_almost_fine}

As we have seen in the previous section, the knowledge of almost fine group gradings on $\cA$ up 
to equivalence, together with their universal and Weyl groups, yields a classification of all $G$-gradings 
on $\cA$ up to isomorphism, for any group $G$. We will now discuss, in the abelian case, how
 to determine almost fine gradings if fine gradings are known, which can then be used to classify all 
$G$-gradings for abelian $G$.

Recall from Section~\ref{se:preliminaries} that, if $\AAut(\cA)$ is smooth (as is always the case
 in characteristic $0$), then fine abelian group gradings on $\cA$ are classified by 
the conjugacy classes of maximal diagonalizable subgroups of $\Aut(\cA)$, which can be studied using 
the tools of the theory of algebraic groups or, in characteristic $0$,  of compact Lie groups, since in 
that case the problem reduces to the field of complex numbers (see \cite{Eld16}).  For example, 
fine gradings on exceptional simple Lie algebras and superalgebras over an algebraically closed field 
of characteristic $0$ were classified in this way (see \cite{DEM,EKmon,YuE} and the references therein).
Also note that, for a simple Lie (super)algebra, the universal group of any grading is abelian
 (see, e.g., \cite[Proposition 1.12]{EKmon}).

\begin{proposition}\label{prop:2}
Let $\Delta$ be a fine group (respectively, abelian group) grading on $\cA$ and let $\Gamma$ be 
a coarsening of $\Delta$. Then $\Gamma$ is almost fine if and only if the kernel of the quotient 
map $U_\ab(\Delta)\to U_\ab(\Gamma)$ is finite and $\trank(\Gamma)=\trank(\Delta)$.
\end{proposition}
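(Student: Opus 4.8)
The plan is to reduce the whole statement to a comparison of ranks of universal abelian groups, using Definition~\ref{df:almost_fine}, the fact (noted just after Proposition~\ref{re:refinement}) that a fine grading is almost fine, and Proposition~\ref{re:refinement} itself.

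First I would make the ``quotient map'' precise. Since $\Gamma$ is a coarsening of $\Delta$, each homogeneous component of $\Delta$ lies in a unique component of $\Gamma$, which defines a map from the support of $\Delta$ to the support of $\Gamma$; every defining relation of $U_\ab(\Delta)$ then yields a valid relation among the corresponding components of $\Gamma$, so this map extends to a group homomorphism $\beta\colon U_\ab(\Delta)\to U_\ab(\Gamma)$ (equivalently: realize $\Delta$ over $U(\Delta)$, use the universal property of \S\ref{sse:fine} to write $\Gamma={}^\alpha\Delta$ for a unique $\alpha\colon U(\Delta)\to U(\Gamma)$, and abelianize; in the abelian case one works directly with $U_\ab$). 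Because the support of $\Gamma$ generates $U_\ab(\Gamma)$, the map $\beta$ is surjective, so it is the quotient map of the statement. Next comes the elementary observation: $\ker\beta$ is a subgroup of the finitely generated abelian group $U_\ab(\Delta)$, hence finitely generated, and since rank is additive in the short exact sequence $1\to\ker\beta\to U_\ab(\Delta)\to U_\ab(\Gamma)\to 1$, the kernel $\ker\beta$ is finite if and only if $\rank U_\ab(\Delta)=\rank U_\ab(\Gamma)$.

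With these reductions the equivalence is a short chain of equalities. Recall that $\Gamma$ is almost fine if and only if $\rank U_\ab(\Gamma)=\trank(\Gamma)$, and that $\Delta$, being fine, is almost fine, so $\rank U_\ab(\Delta)=\trank(\Delta)$. For the direct implication, assume $\Gamma$ is almost fine; since $\Delta$ is a refinement of $\Gamma$, Proposition~\ref{re:refinement} gives $\trank(\Delta)=\trank(\Gamma)$, which is one of the two asserted conditions, and then $\rank U_\ab(\Delta)=\trank(\Delta)=\trank(\Gamma)=\rank U_\ab(\Gamma)$, so $\ker\beta$ is finite. Conversely, if $\ker\beta$ is finite and $\trank(\Gamma)=\trank(\Delta)$, then $\rank U_\ab(\Gamma)=\rank U_\ab(\Delta)=\trank(\Delta)=\trank(\Gamma)$, i.e., $\Gamma$ is almost fine.

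I do not expect any real obstacle here; the argument is essentially bookkeeping with ranks once the canonical surjection $\beta$ is in place. The only slightly delicate point is to justify, in the non-abelian situation, that abelianizing the surjection $U(\Delta)\to U(\Gamma)$ induced by the coarsening gives precisely the map $U_\ab(\Delta)\to U_\ab(\Gamma)$ attached to the coarsening, but this is routine from the universal properties recalled in \S\ref{sse:universal} and \S\ref{sse:fine}.
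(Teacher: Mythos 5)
Your proof is correct and follows essentially the same route as the paper: both reduce the statement to the rank computation $\rank U_\ab(\Gamma)=\rank U_\ab(\Delta)-\rank(\ker\beta)$ together with the facts that $\Delta$, being fine, is almost fine and that almost fineness means $\rank U_\ab=\trank$. The only cosmetic difference is that the paper closes the argument with the general inequality $\trank(\Delta)\le\trank(\Gamma)$ for coarsenings, whereas you invoke Proposition~\ref{re:refinement} in the forward direction; both are valid and equivalent bookkeeping.
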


\begin{proof}
Denote $U=U_\ab(\Delta)$ and let $E\subset U$ be the above kernel. 
Since the canonical almost fine refinement of $\Delta$ cannot be proper, $\Delta$ is almost fine, so 
we have $\rank(U)=\trank(\Delta)$. Now, tensoring the short exact sequence of abelian groups 
$0\to E\to U\to U/E\to 0$ by $\QQ$ over $\ZZ$ 
yields $\rank(U/E)=\rank(U)-\rank(E)$. Since $\Gamma$ is a coarsening of $\Delta$, we also have 
$\trank(\Delta)\le\trank(\Gamma)$. Therefore, $\rank(U/E)=\trank(\Gamma)$ if and only if 
$\rank(E)=0$ and $\trank(\Delta)=\trank(\Gamma)$.
\end{proof}

Now let $\Gamma$ be an almost fine abelian group grading. Then $\Gamma$ is a coarsening of some 
fine abelian group grading $\Delta$, hence $\Gamma$ is defined by the quotient map 
$U_\ab(\Delta)\to U_\ab(\Gamma)$, whose kernel must be finite by Proposition~\ref{prop:2}. 
It follows that Proposition~\ref{prop:no_p_torsion} and Corollary~\ref{cor:eigen} extend to 
almost fine gradings:

\begin{corollary}\label{cor:no_p_torsion}
Assume $\AAut(\cA)$ is smooth. If $\Gamma$ is an almost fine abelian group grading on $\cA$, 
then $U_\ab(\Gamma)$ has no $p$-torsion in the case $\chr\FF=p$ and, hence, $\Gamma$ is 
the eigenspace decomposition 
with respect to $\Diag(\Gamma)$ in any characteristic.
\end{corollary}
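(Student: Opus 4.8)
The plan is to reduce Corollary~\ref{cor:no_p_torsion} to the already-established Proposition~\ref{prop:no_p_torsion} and Corollary~\ref{cor:eigen} by exhibiting any almost fine abelian group grading $\Gamma$ as a coarsening of a suitable fine one with finite kernel, and then transferring the absence of $p$-torsion across a finite-kernel quotient. First I would recall the structural fact emphasized just before the statement: every grading is a coarsening of a fine grading (in the appropriate class), so in particular $\Gamma$ is a coarsening of some fine abelian group grading $\Delta$ on $\cA$, realized via the quotient map $\varpi\colon U_\ab(\Delta)\to U_\ab(\Gamma)$. Since $\Gamma$ is almost fine and $\Delta$, being fine, is also almost fine (as observed in Section~\ref{se:def_ex}, its canonical almost fine refinement cannot be proper), Proposition~\ref{prop:2} applies and tells us that the kernel $E:=\ker\varpi$ is \emph{finite}.

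The next step is the arithmetic core: deduce that $U_\ab(\Gamma)$ has no $p$-torsion from the facts that $U_\ab(\Delta)$ has no $p$-torsion (this is exactly Proposition~\ref{prop:no_p_torsion}, using that $\AAut(\cA)$ is smooth) and that $E$ is finite. Here I would argue as follows. Suppose $\bar{u}\in U_\ab(\Gamma)$ satisfies $p\bar{u}=0$ with $\bar{u}\neq 0$. Lift $\bar{u}$ to some $u\in U_\ab(\Delta)$; then $pu\in E$, so $u$ generates a finitely generated abelian subgroup $\langle u\rangle$ meeting $E$ in a subgroup of finite index, whence $\langle u\rangle$ is finite, i.e. $u$ is a torsion element of $U_\ab(\Delta)$. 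Write the order of $u$ as $p^a m$ with $p\nmid m$; since $U_\ab(\Delta)$ has no $p$-torsion we must have $a=0$, so $u$ has order coprime to $p$. But then $pu$ also has order coprime to $p$, and $pu$ lies in $\ker\varpi$ while its image $\varpi(pu)=p\bar u=0$ forces nothing directly --- instead I should run the cleaner version: the torsion subgroup $t(U_\ab(\Delta))$ has order prime to $p$ (no $p$-torsion plus finite generation), its image under $\varpi$ contains all torsion of $U_\ab(\Gamma)$ that comes from torsion upstairs, and the only possible new torsion in $U_\ab(\Gamma)$ would arise from the finite group $E$ interacting with free generators; a short exact sequence chase $0\to E\to U_\ab(\Delta)\to U_\ab(\Gamma)\to 0$ together with $E$ finite shows $t(U_\ab(\Gamma))$ is a quotient of $t(U_\ab(\Delta))\cdot(\text{something killed by }|E|)$, hence has order dividing a power of $|t(U_\ab(\Delta))|\cdot|E|$; since I may choose $\Delta$ so that $|E|$ is coprime to $p$ --- indeed $E$ is a subgroup of the torsion of $U_\ab(\Delta)$ after all, wait, it need not be --- I'll instead invoke directly the group-scheme characterization below.

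Actually the smoothest route, which I would adopt to avoid the delicate arithmetic, is to argue entirely on the group-scheme side, paralleling the proof of Proposition~\ref{prop:no_p_torsion}. By Section~\ref{sse:grad_act}, $\Diags(\Gamma)\simeq U_\ab(\Gamma)^D$, so $U_\ab(\Gamma)$ has no $p$-torsion if and only if $\Diags(\Gamma)$ is smooth. Now $\Diags(\Gamma)$ is a diagonalizable subgroup scheme of $\AAut(\cA)$ sitting inside $\Diags(\Delta)$ with $\Diags(\Delta)$ a \emph{maximal} diagonalizable subgroup scheme (as $\Delta$ is fine), and $\Diags(\Delta)$ is smooth by Proposition~\ref{prop:no_p_torsion}. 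The quotient $\Diags(\Delta)/\Diags(\Gamma)\simeq E^D$ is a finite diagonalizable group scheme because $E$ is finite by Proposition~\ref{prop:2}; and a diagonalizable group scheme $E^D$ with $E$ finite is smooth precisely when $|E|$ is prime to $p$. So smoothness of $\Diags(\Gamma)$ does not come for free, and I realize the correct and clean argument is: write $\Diags(\Gamma)\simeq H^D$ for $H:=U_\ab(\Gamma)$ finitely generated, and decompose $H=H_{p}\times H'$ with $H_p$ its $p$-torsion; then $\Diags(\Gamma)=H_p^D\times (H')^D$ where $H_p^D$ is infinitesimal; since $\Diags(\Gamma)\subset\Diags(\Delta)$ and $\Diags(\Delta)$ is smooth hence reduced, every subgroupscheme of $\Diags(\Delta)$ over our perfect (algebraically closed) field that is a \emph{closed} subgroupscheme need not be reduced, but an infinitesimal subgroupscheme of a smooth diagonalizable group (a quasitorus) must be trivial --- because a quasitorus over an algebraically closed field of characteristic $p$ is $(\FF^\times)^r\times(\text{finite abelian }p'\text{-group})^D$ and has no nontrivial infinitesimal subgroupschemes. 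Hence $H_p=\mathbf{1}$, i.e. $U_\ab(\Gamma)$ has no $p$-torsion. The second assertion --- that $\Gamma$ is then the eigenspace decomposition with respect to $\Diag(\Gamma)$ --- follows verbatim from the no-$p$-torsion conclusion exactly as in Corollary~\ref{cor:eigen}, since with $U_\ab(\Gamma)$ free of $p$-torsion the group scheme $\Diags(\Gamma)$ is smooth and equals its group of $\FF$-points $\Diag(\Gamma)$, so \eqref{eq:eigenspace} recovers $\Gamma$.

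The main obstacle is precisely the middle step: one must rule out infinitesimal (non-smooth) parts of $\Diags(\Gamma)$, and the cheap idea ``$E$ finite, done'' is not quite enough on its own because a finite-kernel quotient can in principle introduce $p$-torsion. The key insight that makes it work is that $\Diags(\Gamma)$ is sandwiched inside the \emph{smooth} quasitorus $\Diags(\Delta)$ (smoothness of the latter being the content of Proposition~\ref{prop:no_p_torsion}), and a smooth diagonalizable group over an algebraically closed field has no nontrivial infinitesimal subgroupschemes; equivalently, on the dual side, a finite-index subgroup of a finitely generated abelian group with no $p$-torsion again has no $p$-torsion --- which is the elementary statement that if $E\subset U$ is finite and $U$ has no $p$-torsion then $U/E$ has no $p$-torsion, valid because $t(U)$ is finite of order prime to $p$, so $t(U/E)$ is a subquotient of the $p'$-group $t(U)\oplus E$ after clearing denominators, hence has order prime to $p$ provided $E$ itself is prime-to-$p$; and $E$, being a subgroup of the $p'$-group $t(U)$ (as $E$ is finite and $U$ torsion-free modulo $t(U)$, so $E\subset t(U)$), is indeed prime to $p$. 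This last observation $E\subset t(U_\ab(\Delta))$ --- forced by $E$ being finite inside a finitely generated abelian group --- closes the argument cleanly.
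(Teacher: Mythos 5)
Your closing argument is correct and is essentially the paper's: $\Gamma$ is a coarsening of some fine abelian group grading $\Delta$ whose kernel $E\subset U_\ab(\Delta)$ is finite by Proposition~\ref{prop:2}; $U_\ab(\Delta)$ has no $p$-torsion by Proposition~\ref{prop:no_p_torsion}; and since the finite subgroup $E$ necessarily lies in the $p'$-group $t(U_\ab(\Delta))$, the quotient $U_\ab(\Gamma)\cong U_\ab(\Delta)/E$ again has no $p$-torsion, after which the eigenspace statement follows exactly as in Corollary~\ref{cor:eigen}. One caution before you keep the intermediate material: the group-scheme detour rests on the claim that a smooth quasitorus has no nontrivial infinitesimal subgroupschemes, which is false --- $\mu_p\subset\mathbb{G}_m$ is a counterexample (dually, $\ZZ/p\ZZ$ is a \emph{quotient} of $\ZZ$, and subgroupschemes of $\Diags(\Delta)$ correspond to quotients of $U_\ab(\Delta)$, not subgroups) --- so the finiteness of $E$ supplied by Proposition~\ref{prop:2} is genuinely indispensable and cannot be replaced by the containment $\Diags(\Gamma)\subset\Diags(\Delta)$ alone. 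Discard the two false starts and retain only the final elementary argument.
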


\begin{corollary}
Any fine abelian group grading admits only finitely many almost fine coarsenings that are 
themselves abelian group gradings.
\end{corollary}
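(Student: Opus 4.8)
The plan is to reduce the statement, via Proposition~\ref{prop:2}, to the elementary fact that a finitely generated abelian group has only finitely many finite subgroups. Write $U=U_\ab(\Delta)$. Since $\cA$ is finite-dimensional and all homogeneous components are nonzero, the support $S$ of $\Delta$ is finite, hence $U$ is a finitely generated abelian group. As $\Delta$ is an abelian group grading, we may realize it over $U$ and thus regard $S$ as a subset of $U$.

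First I would describe how an abelian group coarsening of $\Delta$ is encoded by a subgroup of $U$. If $\Gamma$ is a coarsening of $\Delta$ that is itself an abelian group grading, then, as recalled in Section~\ref{se:preliminaries} (universal property of $U=U_\ab(\Delta)$), we have $\Gamma={}^\alpha\Delta$ for a unique homomorphism $\alpha\colon U\to U_\ab(\Gamma)$; moreover $\alpha$ is surjective, since $U_\ab(\Gamma)$ is generated by the image of $S$. Put $E_\Gamma\bydef\ker\alpha$, so that $U_\ab(\Gamma)\cong U/E_\Gamma$ and $\alpha$ is, up to isomorphism, the quotient map $U\to U/E_\Gamma$. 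Because $U$ is abelian, for $s,t\in S$ the components $\cA_s$ and $\cA_t$ lie in the same homogeneous component of $\Gamma$ if and only if $\alpha(s)=\alpha(t)$, i.e.\ $st^{-1}\in E_\Gamma$. Hence the partition of $S$ determined by $\Gamma$ consists of the nonempty intersections of $S$ with the cosets of $E_\Gamma$, and therefore the grading $\Gamma$ is recovered from the subgroup $E_\Gamma\subset U$; in particular the assignment $\Gamma\mapsto E_\Gamma$ is injective.

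Next, Proposition~\ref{prop:2} tells us that if $\Gamma$ is in addition almost fine, then $E_\Gamma$ (the kernel of the quotient map $U_\ab(\Delta)\to U_\ab(\Gamma)$) is finite; we use only this implication. Thus $\Gamma\mapsto E_\Gamma$ maps the set of almost fine abelian group coarsenings of $\Delta$ injectively into the set of finite subgroups of $U$. Finally, since $U$ is a finitely generated abelian group, every one of its finite subgroups is contained in the torsion subgroup $t(U)$, which is itself finite; hence $U$ has only finitely many finite subgroups, and the desired finiteness follows.

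The argument is essentially bookkeeping once Proposition~\ref{prop:2} is available, so there is no genuine obstacle. The one point that deserves a little care is the identification of a coarsening of $\Delta$ with a partition of the finite set $S$ together with the verification that, for an abelian group coarsening, this partition is exactly the one cut out on $S$ by the cosets of $E_\Gamma$ — after which finiteness is immediate from the structure of finitely generated abelian groups.
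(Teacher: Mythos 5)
Your proof is correct and follows essentially the same route as the paper, which derives the corollary from the observation (stated just before it) that an almost fine coarsening is determined by the quotient map $U_\ab(\Delta)\to U_\ab(\Gamma)$, whose kernel is finite by Proposition~\ref{prop:2}, combined with the fact that a finitely generated abelian group has only finitely many finite subgroups. Your additional bookkeeping (identifying the coarsening with the partition of the support by cosets of the kernel, to get injectivity of $\Gamma\mapsto E_\Gamma$) is a sound elaboration of the step the paper leaves implicit.
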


Enumerating almost fine coarsenings is helped by the fact that the subgroups of $U_\ab(\Delta)$ 
that lie in the same $W(\Delta)$-orbit correspond to equivalent coarsenings.

\begin{theorem}\label{th:3}
Assume $\AAut(\cA)$ is reductive. Let $\Delta$ be a fine abelian group grading on 
$\cA$, $U=U_\ab(\Delta)$, and $\Sigma$ 
be the support of the induced $U$-grading on the Lie algebra $\cD=\Der(\cA)$. Let $\Gamma$ be 
an abelian group grading that is a coarsening of $\Delta$ and let $E$ be the kernel of the quotient 
map $U\to U_\ab(\Gamma)$.
Then $\Gamma$ is almost fine if and only if $E\subset t(U)$ and $E\cap\Sigma\subset\{e\}$. 
\end{theorem}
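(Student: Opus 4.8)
The plan is to reduce the statement to the dimension count furnished by Proposition~\ref{prop:reductive}, applied to both $\Delta$ and $\Gamma$. First I would realize $\Delta$ over $U = U_\ab(\Delta)$, so that $\cD = \Der(\cA)$ carries the induced $U$-grading $\cD = \bigoplus_{u\in U}\cD_u$ with support $\Sigma$. Since $\Gamma$ is an abelian group grading that coarsens $\Delta$, it equals ${}^\alpha\Delta$ for the surjective quotient map $\alpha\colon U\to U_\ab(\Gamma)$ with $\ker\alpha = E$; because passing to this coarsening merges the homogeneous components of the induced grading on $\End(\cA)$, and hence on $\cD$, exactly along the fibres of $\alpha$, the degree-$e$ component of the induced grading on $\cD$ is
\[
\cD_e(\Gamma) \;=\; \bigoplus_{u\in E}\cD_u \;=\; \bigoplus_{u\in E\cap\Sigma}\cD_u \;\supseteq\; \cD_e(\Delta),
\]
so that
\[
\dim\cD_e(\Gamma) - \dim\cD_e(\Delta) \;=\; \sum_{u\in(E\cap\Sigma)\smallsetminus\{e\}}\dim\cD_u \;\ge\; 0,
\]
which vanishes precisely when $E\cap\Sigma\subseteq\{e\}$.

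Next I would bring in the hypothesis that $\AAut(\cA)$ is reductive. Since $\Delta$ is fine, it is almost fine, so Proposition~\ref{prop:reductive} gives $\dim\cD_e(\Delta) = \trank(\Delta) = \rank\bigl(U_\ab(\Delta)\bigr) = \rank U$. Separately, tensoring $0\to E\to U\to U_\ab(\Gamma)\to 0$ with $\QQ$ over $\ZZ$ shows $\rank\bigl(U_\ab(\Gamma)\bigr) = \rank U - \rank E \le \rank U$, with equality if and only if $\rank E = 0$, i.e.\ (as $U$ is finitely generated) if and only if $E\subseteq t(U)$. Combining the two gives the chain
\[
\rank\bigl(U_\ab(\Gamma)\bigr) \;\le\; \rank U \;=\; \dim\cD_e(\Delta) \;\le\; \dim\cD_e(\Gamma).
\]

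Finally, I would apply Proposition~\ref{prop:reductive} once more, now to $\Gamma$: it is almost fine if and only if $\rank\bigl(U_\ab(\Gamma)\bigr) = \dim\cD_e(\Gamma)$. By the displayed chain this holds exactly when both inequalities are equalities; the first is equivalent to $E\subseteq t(U)$ by the rank computation, and the second, by the first paragraph, to $E\cap\Sigma\subseteq\{e\}$. This is the asserted equivalence.

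I do not expect a real obstacle: the argument is just Proposition~\ref{prop:reductive} combined with the explicit description of $\cD_e(\Gamma)$ in terms of the $U$-grading on $\cD$. The two points that want some care are the identity $\cD_e(\Gamma) = \bigoplus_{u\in E}\cD_u$, which rests on the fact that for abelian groups the coarsening ${}^\alpha\Delta$ merges components exactly along the fibres of $\alpha$, and checking that Proposition~\ref{prop:reductive} is legitimately applicable to both $\Delta$ (fine, hence almost fine) and $\Gamma$ --- which is exactly where reductivity of $\AAut(\cA)$ enters.
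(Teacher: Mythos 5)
Your proof is correct and follows essentially the same route as the paper's: both apply Proposition~\ref{prop:reductive} to $\Delta$ and to $\Gamma$, identify the identity component of the induced $U/E$-grading on $\cD$ with $\bigoplus_{u\in E}\cD_u$, and conclude via the rank computation $\rank(U/E)=\rank(U)-\rank(E)$ borrowed from the proof of Proposition~\ref{prop:2}. Your write-up just makes explicit the dimension bookkeeping that the paper leaves to the reader.
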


\begin{proof}
Under the additional assumption, we have $\rank(U)=\dim\cD_e$ by 
Proposition~\ref{prop:reductive}. With respect to the $U/E$-grading on $\cD$ induced by 
$\Gamma$, the identity component is $\cD_E\bydef\bigoplus_{g\in E}\cD_g$. Hence, $\Gamma$ 
is almost fine if and only if $\rank(U/E)=\dim\cD_E$ (again by Proposition~\ref{prop:reductive}). 
Then we proceed as in the proof of  Proposition~\ref{prop:2}, but the condition 
$\trank(\Gamma)=\trank(\Delta)$ is replaced with $\dim\cD_E=\dim\cD_e$, which is equivalent 
to $E\cap\Sigma\subset\{e\}$. 
\end{proof}

\begin{example}\label{ex:A3}
If $\chr\FF\ne 2$, the Lie algebra $\cL=\Sl_4(\FF)$ is simple of type $A_3$ and has a fine 
$\ZZ_2^4$-grading $\Delta$ obtained by refining, by means of the outer automorphism 
$X\mapsto-X^T$, the $\ZZ_2^3$-grading induced from the Cartan grading by the ``mod $2$'' map 
$\ZZ^3\to\ZZ_2^3$ (see e.g. \cite[Example 3.60]{EKmon}). Explicitly, $\Delta$ is the restriction to 
$\Sl_4(\FF)$ of the $\ZZ_2\times(\ZZ_2^4)_0$-grading on $\Gl_4(\FF)$ defined by setting 
$\deg(E_{ij}-E_{ji})=(\zero,\veps_i-\veps_j)$ and $\deg(E_{ij}+E_{ji})=(\one,\veps_i-\veps_j)$ where 
$\{\veps_1,\veps_2,\veps_3,\veps_4\}$ is the standard basis of $\ZZ_2^4$ and 
$(\ZZ_2^4)_0\simeq\ZZ_2^3$ is the span of $\veps_i-\veps_j$. We have $\cL\simeq\Der(\cL)$, 
$\cL_e=0$ and, moreover, $\cL_g=0$ for $g\in\ZZ_2\times\{(\one,\one,\one,\one)\}$. It follows 
that, for each of the two possible values of $g$, the $\ZZ_2^3$-grading on $\cL$ induced by the 
natural homomorphism $\ZZ_2^4\to\ZZ_2^4/\langle g\rangle$ is almost fine. In fact, these two 
almost fine gradings are equivalent, because the two values of $g$ are in the same 
$W(\Delta)$-orbit (see \cite[Example 3.63]{EKmon}).
\end{example}

%-------------------------------
\section{Root systems associated to non-special gradings on semisimple Lie algebras}\label{se:Lss}

In this section $\cL$ will be a semisimple finite-dimensional Lie algebra over an algebraically closed
field $\FF$ of characteristic $0$. The aim is to show that a (possibly nonreduced) root system of rank $r$ 
can be attached canonically to any abelian group grading $\Gamma$ on $\cL$ of toral rank $r\neq 0$, 
i.e., to any non-special $\Gamma$. We will take advantage of the results in \cite{Eld_root} that deal 
with the case when $\Gamma$ is fine. 
For the definition of possibly nonreduced root systems, see e.g. \cite[Ch.~VI,\S 1]{BouVI}.

Let $G$ be an abelian group and let $\Gamma:\cL=\bigoplus_{g\in G}\cL_g$ be a $G$-grading on $\cL$ 
with $\trank(\Gamma)\geq 1$. Let $T$ be a maximal torus in $\Stab(\Gamma)$. It induces a weight
space decomposition:
\begin{equation}\label{eq:Lss_weights}
\cL=\bigoplus_{\lambda\in \frX(T)}\cL(\lambda)
\end{equation}
where $\cL(\lambda)=\{x\in\cL\mid \tau(x)=\lambda(\tau)x\;\forall \tau\in T\}$. 

Let $\Gamma_T^*$ be the associated almost fine grading, as in \eqref{eq:def_GT}:
\begin{equation}\label{eq:Lss_GT}
\Gamma_T^*:\cL
=\kern -10pt\bigoplus_{(g,\lambda)\in G\times\frX(T)} \kern -10pt\cL_g\cap \cL(\lambda).
\end{equation}
Let $\cH$ be the Lie algebra of $T$ inside $\cL\simeq\Der(\cL)$, so
$\cH$ is a Cartan subalgebra of the reductive Lie subalgebra $\cL_e$. 
The adjoint action of $\cH$ on any weight space $\cL(\lambda)$ is given by the differential 
$\textup{d}\lambda\in\cH^*$, 
which is therefore a weight of the adjoint action of $\cH$ on $\cL$. To simplify notation, we will use 
$\lambda$ to denote its differential, too, and thus identify $\frX(T)$ with a subgroup of $\cH^*$. 

Denote by $\Phi$ the set of nonzero weights of $\cH$ on $\cL$: 
\[
\Phi=\{\lambda\in\cH^*\smallsetminus\{0\} \mid \cL(\lambda)\neq 0\}.
\] 
Under the above identification, we have $\ZZ\Phi=\frX(T)$ and also 
$\cH\subset\cL_{(e,0)}=\cL_e\cap\cL(0)$, where $0$ is (the differential of) the trivial character on $T$.
Since $\Gamma_T^*$ is almost fine, we have equality: $\cH=\cL_e\cap\cL(0)$ 
by Proposition~\ref{prop:reductive}, as the connected component of $\Aut(\cL)$ is semisimple and, hence,
$\AAut(\cL)$ is reductive.

\begin{theorem}\label{th:Lss_rootsystem}
With the hypotheses above, $\Phi$ is a (possibly nonreduced) root system in $\RR\otimes_\ZZ\ZZ\Phi$. 
If $\cL$ is simple, then $\Phi$ is an irreducible root system.
\end{theorem}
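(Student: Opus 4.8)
The plan is to verify the axioms of a (possibly nonreduced) root system for $\Phi$, the set of nonzero weights of $\cH$ on $\cL$, working inside the real vector space $V\bydef\RR\otimes_\ZZ\ZZ\Phi$. Recall $\cH$ is a Cartan subalgebra of the reductive Lie algebra $\cL_e$, so the pair $(\cL_e,\cH)$ already carries a root space decomposition; its nonzero roots, call them $\Phi_e$, form a reduced root system in a subspace of $V$. The weights in $\Phi$ that do \emph{not} lie in $\Phi_e$ come from the components $\cL_g$ with $g\neq e$, and the key structural input is that $\cL$ is a module for the reductive group generated by $T$ together with the $\mathfrak{sl}_2$-triples coming from $\Phi_e$; finite-dimensional representation theory of reductive Lie algebras then constrains $\Phi$.

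First I would record the basic symmetry: the involution $x\mapsto -x$ on $\cL$ compatible with the grading, or more simply the fact that $[\cL(\lambda),\cL(\mu)]\subset\cL(\lambda+\mu)$ together with nondegeneracy of the Killing form restricted to $\cL_e$ (hence to $\cH$), shows that $-\Phi=\Phi$ and that for each $\lambda\in\Phi$ there is a nonzero pairing between $\cL(\lambda)$ and $\cL(-\lambda)$, producing a coroot $\lambda^\vee\in\cH$ with $\lambda(\lambda^\vee)=2$ after suitable normalization --- this requires checking that $\cL(\lambda)$ together with $\cH'=[\cL(\lambda),\cL(-\lambda)]$ and $\cL(-\lambda)$ generates a copy of $\mathfrak{sl}_2$, which follows because $\cL$ is finite-dimensional and the $\lambda$-weight string must terminate on both sides. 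Next I would establish that $V$ is spanned by $\Phi$ (immediate, since $\ZZ\Phi=\frX(T)$ spans $V$ by construction) and that $\Phi$ is finite (immediate, finite-dimensionality of $\cL$). Then, for the reflection axiom, for $\lambda\in\Phi$ define $s_\lambda(\mu)=\mu-\mu(\lambda^\vee)\lambda$; applying $\exp(\ad e_\lambda)\exp(-\ad f_\lambda)\exp(\ad e_\lambda)$ inside $\Aut(\cL)$ (the $\mathfrak{sl}_2$ attached to $\lambda$) gives an automorphism normalizing $T$ and inducing $s_\lambda$ on $\frX(T)$, hence on $V$; this automorphism permutes the weight spaces $\cL(\mu)$, so $s_\lambda(\Phi)=\Phi$. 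Integrality, $\mu(\lambda^\vee)\in\ZZ$, is the standard weight-of-$\mathfrak{sl}_2$-representation fact applied to $\cL$ viewed as a module over this $\mathfrak{sl}_2$. This gives all the root system axioms; nonreducedness is allowed, so no further work is needed there.

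For the second assertion, suppose $\cL$ is simple and $\Phi$ decomposes as $\Phi=\Phi^{(1)}\sqcup\Phi^{(2)}$ with $\Phi^{(1)}\perp\Phi^{(2)}$ (orthogonal with respect to any Weyl-invariant inner product on $V$) and both nonempty. Orthogonality of $\lambda$ and $\mu$ forces $\lambda(\mu^\vee)=\mu(\lambda^\vee)=0$, hence $\mu\pm\lambda\notin\Phi$ whenever $\lambda\in\Phi^{(1)}$, $\mu\in\Phi^{(2)}$ (the $\lambda$-string through $\mu$ has length one); combined with $[\cL(\lambda),\cL(\mu)]\subset\cL(\lambda+\mu)$ this shows the two root-space sums $\cL^{(k)}\bydef\bigoplus_{\lambda\in\Phi^{(k)}}\cL(\lambda)$ commute with each other modulo $\cL(0)$, and one checks that $[\cL^{(1)},\cL^{(2)}]=0$ by pairing against $\cH$. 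Taking the ideal of $\cL$ generated by $\cL^{(1)}$, one sees it is contained in $\cL^{(1)}\oplus\cH$ and is proper (it misses $\cL(\mu)$ for $\mu\in\Phi^{(2)}$, and it misses part of $\cL(0)$ too since $\cH$ acts with the weights of $\Phi^{(1)}$ only on it), contradicting simplicity of $\cL$. Here I should also rule out the degenerate possibility that $V$ itself is zero, but that is excluded by the hypothesis $\trank(\Gamma)=r\neq 0$, which gives $\dim T=\rank(U_\ab)\geq 1$ and a nonzero weight on $\cL$ since $\cH$ acts nontrivially.

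I expect the main obstacle to be the careful verification that each $\lambda\in\Phi$ yields a genuine coroot with $\lambda(\lambda^\vee)=2$ and an $\mathfrak{sl}_2$-triple inside $\cL$ acting on all of $\cL$ with integral weights --- in other words, that the abstract weight-space data behaves exactly like a root space decomposition. The subtlety is that $\cL(\lambda)$ need not be one-dimensional and $\cH$ is only a Cartan subalgebra of $\cL_e$, not of $\cL$, so I cannot simply quote the standard structure theory of semisimple Lie algebras; instead I would lean on the result of \cite{Eld_root} for the fine grading $\Gamma_T^*$ (whose weights on $\cL$ are exactly $\Phi$, since $\cH=\cL_e\cap\cL(0)$ is its identity-and-zero component by Proposition~\ref{prop:reductive}), which already identifies this weight set as a root system, and then note that $\Phi$ depends only on $T$, not on the finer grading. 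If invoking \cite{Eld_root} directly is not clean, the fallback is the self-contained $\mathfrak{sl}_2$-string argument sketched above, for which the genuinely new point is finiteness of the string on both sides of $\lambda$, guaranteed by $\dim\cL<\infty$.
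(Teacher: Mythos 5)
Your final route --- quoting \cite[Theorem~4.4]{Eld_root} for a fine grading and observing that $\Phi$ depends only on $T$ --- is essentially the paper's proof, but with one imprecision to fix: $\Gamma_T^*$ is only \emph{almost} fine (Example~\ref{ex:B2} shows such gradings can fail to be fine), so \cite{Eld_root} does not apply to it directly. The paper passes to a further refinement $\Gamma'$ of $\Gamma_T^*$ that is a fine abelian group grading and then uses Lemma~\ref{lm:GT} together with Proposition~\ref{re:refinement} to conclude that $\Diag(\Gamma')^\circ=\Diag(\Gamma_T^*)^\circ=T$ is still a maximal torus in $\Stab(\Gamma')$, with $U/t(U)\simeq\frX(T)$ for $U=U_\ab(\Gamma')$; this is what guarantees that the root system produced by \cite[Theorem~4.4]{Eld_root} for $\Gamma'$ is computed from the same torus and hence coincides with your $\Phi$. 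Your parenthetical ``$\Phi$ depends only on $T$, not on the finer grading'' is exactly the right bridge, but it needs Proposition~\ref{re:refinement} to know that $T$ survives the refinement as the relevant maximal torus.

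The self-contained fallback, as sketched, has a genuine gap at the coroot step, and it is not a technicality. You have $[\cL(\lambda),\cL(-\lambda)]\subset\cL(0)=\Cent_\cL(\cH)$, and $\cL(0)$ is in general strictly larger than $\cH$ (in the $E_8$ example of Section~\ref{se:Lss}, $\cL(0)=\cH\oplus\Der(\OO)$). The Killing form $\kappa$ only gives you that the orthogonal projection of $[e,f]$ onto $\cH$ equals $\kappa(e,f)\,t_\lambda$, where $t_\lambda\in\cH$ represents $\lambda|_\cH$; it does not put $[e,f]$ itself inside $\cH$, so you do not obtain an $\Sl_2$-triple with semisimple element in $\cH$ for free, and without that the reflection and integrality axioms do not follow from weight-string arguments. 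A telling symptom: your verification of the axioms never uses the maximality of $T$ in $\Stab(\Gamma)$, yet the statement is false without it --- the nonzero weights of an arbitrary toral subalgebra of $\cL$ are just the nonzero restrictions of the roots of $\cL$ to that subalgebra, and these generally satisfy neither integrality nor closure under reflections. Making the maximality of $T$ do its work is precisely the content of \cite[Theorem~4.4]{Eld_root} (or \cite[Theorem~6.61]{EKmon}), so deferring to that reference, after the fine-refinement step above, is the right call rather than merely a convenience.
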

\begin{proof}
 Let $\Gamma'$ be a 
refinement of $\Gamma_T^*$ that is a fine abelian group grading, and let $U$ be its universal 
abelian group. 
Lemma \ref{lm:GT} and Proposition \ref{re:refinement}
%, and Corollary \ref{co:trank_max},  - I think it is not needed here
show that $T=\Diag(\Gamma_T^*)^\circ=\Diag(\Gamma')^\circ$ is a maximal torus in $\Stab(\Gamma')$, in particular 
$U/t(U)\simeq\frX(T)$. Now \cite[Theorem~4.4]{Eld_root} (or \cite[Theorem~6.61]{EKmon}) 
gives the result.
\end{proof}

Abelian group gradings on semisimple Lie algebras have been reduced to gradings on simple Lie
algebras in \cite{CE}. For simple Lie algebras, Theorem \ref{th:Lss_rootsystem} implies that any 
non-special abelian group grading is related to a grading by a root system. These gradings were first
studied by S.~Berman and R.V.~Moody \cite{BermanMoody}.

\begin{df}\label{df:reduced_root_graded}
A Lie algebra $\cL$ over $\FF$ is \emph{graded by the reduced root system $\Phi$}, or  
\emph{$\Phi$-graded}, if the following conditions are satisfied:
\begin{romanenumerate}
\item $\cL$ contains as a subalgebra a finite-dimensional simple Lie algebra 
whose root system relative to a Cartan subalgebra $\frh=\frg_0$ is $\Phi$:  
$\frg=\frh\oplus\bigl(\bigoplus_{\alpha\in\Phi}\frg_\alpha\bigr)$;
\item $\cL=\bigoplus_{\alpha\in\Phi\cup\{0\}}\cL(\alpha)$, where $\cL(\alpha)
=\{x\in\cL\mid [h,x]=\alpha(h)x\ \text{for all}\ h\in\frh\}$; 
\item $\cL(0)=\sum_{\alpha\in\Phi}[\cL(\alpha),\cL(-\alpha)]$.
\end{romanenumerate}
The subalgebra $\frg$ is said to be a \emph{grading subalgebra} of $\cL$.
\end{df}

The simply laced case (i.e., types $A_r$, $D_r$ and $E_r$) was studied in \cite{BermanMoody}, 
and G.~Benkart and E.~Zelmanov  considered the remaining  cases in \cite{BZ}.

As to nonreduced root systems, the definition works as follows (see \cite{ABG}):

\begin{df}\label{df:nonreduced_root_graded}
Let $\Phi$ be the nonreduced root system $BC_r$ ($r\geq 1$). A Lie algebra $\cL$ over $\FF$ 
is \emph{graded by $\Phi$}, or \emph{$\Phi$-graded}, if the following conditions are satisfied:
\begin{romanenumerate}
\item $\cL$ contains as a subalgebra a finite-dimensional simple Lie algebra 
$\frg=\frh\oplus\bigl(\bigoplus_{\alpha\in\Phi'}\frg_\alpha\bigr)$ whose root system $\Phi'$ relative to 
a Cartan subalgebra $\frh=\frg_0$ is the reduced subsystem of type $B_r$, $C_r$ or $D_r$ contained in 
$\Phi$;
\item $\cL=\bigoplus_{\alpha\in\Phi\cup\{0\}}\cL(\alpha)$, where 
$\cL(\alpha)=\{x\in\cL\mid [h,x]=\alpha(h)x\,\ \text{for all}\ h\in\frh\}$;
\item $\cL(0)=\sum_{\alpha\in\Phi}[\cL(\alpha),\cL(-\alpha)]$.
\end{romanenumerate}
Again, the subalgebra $\frg$ is said to be a \emph{grading subalgebra} of $\cL$, and $\cL$ is said to 
be \emph{$BC_r$-graded with grading subalgebra of type $X_r$}, where $X_r$ is the type of $\frg$.
\end{df}

Assume from now on that $\cL$ is a finite-dimensional simple Lie algebra over our algebraically 
closed field 
$\FF$ of characteristic $0$ and let $\Gamma:\cL=\bigoplus_{g\in G}\cL_g$ be a non-special grading 
by an abelian group $G$. As in the proof of Theorem \ref{th:Lss_rootsystem}, let $\Gamma_T^*$ be 
the canonical almost fine refinement of $\Gamma$, and refine $\Gamma_T^*$ to a fine abelian 
group grading $\Gamma'=\bigoplus_{u\in U}\cL'_u$, where $U=U_\ab(\Gamma')$ (which coincides 
with $U(\Gamma')$ since $\cL$ is simple \cite[Corollary~1.21]{EKmon}). Then, for any $u$ in the 
support, there
is a unique $\alpha\in\Phi\cup\{0\}$ such that $\cL'_u\subset \cL(\alpha)$, and this induces a 
surjective group homomorphism $\pi\colon U\rightarrow\ZZ\Phi$ with kernel $t(U)$. 

Fix a system $\Delta$ of simple roots of $\Phi$. Then $\Phi=\Phi^+\cup\Phi^-$, with 
$\Phi^+\subset\sum_{\alpha\in\Delta}\ZZ_{\geq 0}\alpha$ and $\Phi^-=-\Phi^+$. 
As in \cite[\S 5]{Eld_root}, we choose, for any $\alpha\in\Delta$, an element $u_\alpha\in U$ such 
that $\pi(u_\alpha)=\alpha$. This gives us a section of the homomorphism $\pi$, so $U$ becomes 
the direct product of the free abelian group $U'$ generated by the elements $u_\alpha$, 
$\alpha\in \Delta$, and its torsion subgroup $t(U)$. For any $\lambda\in\ZZ\Phi$, we will denote 
by $u_\lambda$ the unique element of $U'$ such that $\pi(u_\lambda)=\lambda$.

Now, \cite[Theorem 5.1]{Eld_root} (or \cite[Theorem~6.62]{EKmon}) shows that
\[
\frg\bydef \bigoplus_{u\in U'}\cL'_u 
\]
is a simple Lie algebra with Cartan subalgebra $\frh=\cH$ and a root system $\Phi'\subset\Phi$ such 
that $\Delta$ is a system of simple roots. Moreover, $\cL$ is graded by the irreducible root system 
$\Phi$ with grading subalgebra $\frg$,  and if $\Phi$ is nonreduced (type $BC_r$), then $\frg$ is simple 
of type $B_r$. By its construction, not only $\frg$ but also the components of its triangular 
decomposition $\frg=\frg_-\oplus\frh\oplus\frg_+$ associated to $\Delta$ are graded subalgebras of 
$\cL$ with respect to the fine grading $\Gamma'$ and, hence, also for $\Gamma_T^*$ and $\Gamma$.

In this situation, the adjoint action of $\frg$ on $\cL$ decomposes $\cL$ into a direct sum of 
irreducible submodules of only a few isomorphism classes. Collecting isomorphic submodules, we get 
the corresponding isotypic decomposition (see \cite{ABG}):
\begin{itemize}
\item If $\Phi$ is reduced, then the isotypic decomposition is 
\[
\cL=(\frg\otimes\cA)\oplus(\cW\otimes\cC)\oplus\cD,
\]
where $\cW=0$ if $\Phi$ is simply laced, and otherwise it is the irreducible module whose highest weight, 
relative to $\cH$ and $\Delta$, is the highest short root in $\Phi$. The component $\cD$ is the
sum of trivial one-dimensional modules, so $\cD=\Cent_\cL(\frg)$ is a subalgebra of $\cL$.
Note that $\cA$ contains a distinguished element $1$ that identifies the subalgebra $\frg$ with 
$\frg\otimes 1$.
In this case $\fra\bydef \cA\oplus\cC$ becomes the \emph{coordinate algebra} with identity $1$, 
whose product is determined by the bracket in $\cL$. Depending on the type of $\Phi$, different 
classes of algebras (associative, alternative, Jordan) may appear as coordinate algebras.

\item If $\Phi$ is of type $BC_r$ with grading subalgebra of type $B_r$ and $r\geq 2$, then the isotypic
decomposition is
\[
\cL=(\frg\otimes\cA)\oplus(\frs\otimes\cB)\oplus(\cW\otimes\cC)\oplus\cD,
\]
where $\cW$ is the natural module, of dimension $2r+1$, for the simple Lie algebra 
$\frg\simeq\frso_{2r+1}(\FF)$, 
so $\cW$ is endowed with an invariant symmetric nondegenerate bilinear form $(\cdot\mid\cdot)$. 
Then 
\[
\frs=\{f\in\End_\FF(\cW)\mid (f(v)\mid w)=(v\mid f(w))\;\;\forall u,v\in\cW,\;\tr(f)=0\}.
\]
The subalgebra $\cD$ is again the centralizer of $\frg$, and the \emph{coordinate algebra} is 
$\fra\bydef\cA\oplus\cB\oplus \cC$.

\item If $\Phi$ is of type $BC_1$ with grading subalgebra of type $B_1$, then the adjoint module 
is isomorphic to the natural module and the isotypic decomposition reduces to
\[
\cL=(\frg\otimes\cA)\oplus(\frs\otimes\cB)\oplus\cD,
\]
with \emph{coordinate algebra} $\fra\bydef\cA\oplus\cB$.
\end{itemize}

To simplify notation, we will write $\fra=\cA\oplus\cB\oplus\cC$ in all cases, with the understanding that 
$\cB$ or $\cC$ may be $0$.

It is clear that $\cD=\Cent_{\cL}(\frg)$ is a graded subalgebra with respect to $\Gamma'$. 
%Note that $\cD\subset\cL(0)=\Cent_\cL(\cH)$ and, hence, for the original grading $\Gamma$, we get 
%$\cD_e\subset\cL_e\cap\cL(0)\cap\cD=\cH\cap\cD\subset\frg\cap\cD=0$. In other words, the 
%restriction of $\Gamma$ to $\cD$ is a special grading.
If $\lambda$ is the highest root of $\frg$ with respect to $\Delta$, then 
$\frg_\lambda\otimes\cA=\{x\in\cL(\lambda)\mid [\frg_+,x]=0\}$ is a graded subspace of $\cL$ 
with respect to $\Gamma'$. Since $\dim\frg_\lambda=1$, this allows us to define 
a grading on $\cA$ by the torsion subgroup $t(U)$ as follows: $\cA=\bigoplus_{u\in t(U)}\cA'_u$ where 
$\frg_\lambda\otimes\cA'_u=(\frg_\lambda\otimes\cA)\cap\cL'_{u_\lambda u}$. Since  $\frg\otimes\cA$ 
is the 
$\frg$-submodule of $\cL$ generated by $\frg_\lambda\otimes\cA$, it follows that the isotypic 
component $\frg\otimes\cA$ 
is graded and $\frg_\mu\otimes\cA'_u = (\frg\otimes\cA)\cap\cL'_{u_\mu u}$ for all $\mu\in\ZZ\Phi$ 
and $u\in t(U)$. 
The same argument applies to the other possible isotypic components $\frs\otimes\cB$ and 
$\cW\otimes\cC$, 
substituting for $\lambda$ the highest weight of $\frs$ or $\cW$. 
It follows that the coordinate algebra $\fra$ inherits a grading by $t(U)$. 

Now let $\delta\colon U\rightarrow G$ be the group homomorphism obtained from the fact that $\Gamma'$ 
is a refinement of 
$\Gamma$: $\cL_g=\bigoplus_{u\in\delta^{-1}(g)}\cL'_u$ for any $g\in G$. Then $\frg$ and the 
isotypic components are 
graded subspaces of $\cL$ with respect to $\Gamma$, with the $G$-gradings induced by $\delta$. 
We also define a $G$-grading on $\fra$ (and its pieces) via $\delta$. For the (reductive) subalgebra 
$\cL(0)=(\frg_0\otimes\cA)\oplus(\frs_0\otimes\cB)\oplus(\cW_0\otimes\cC)\oplus\cD$, 
the identity component with respect to $\Gamma$ is 
$\cH=\cL(0)_e=(\frg_0\otimes\cA_e)\oplus(\frs_0\otimes\cB_e)\oplus(\cW_0\otimes\cC_e)\oplus\cD_e$.
But $\frg_0=\cH$, so we conclude that $\cA_e=\FF 1$ and $\cB_e=\cC_e=\cD_e=0$. 
On the other hand, since $\cL(0)=\Cent_{\cL}(\cH)$, we have $\cH=\cL(0)_e\subset Z(\cL(0))$. 
Therefore, the restriction of $\Gamma$ to the semisimple Lie algebra $[\cL(0),\cL(0)]$ is a special grading.

The fine grading $\Gamma'$ is not uniquely determined by $\Gamma$. In fact, 
by Proposition~\ref{prop:characterize_can_refinement}, we can take as $\Gamma'$ any fine 
refinement of $\Gamma$ that has the same toral rank. We have obtained the following result:

\begin{theorem}\label{th:Lss_structure}
Let $\cL$ be a finite-dimensional simple Lie algebra over an algebraically closed field $\FF$ 
of characteristic $0$ and let $\Gamma:\cL=\bigoplus_{g\in G}\cL_g$ be a non-special grading on $\cL$ by an abelian group 
$G$. Then there exists a fine refinement $\Gamma'$ of $\Gamma$ whose identity component is a 
Cartan subalgebra $\cH$ of the reductive subalgebra $\cL_e$. Moreover, for any such $\Gamma'$, 
let $U=U_\ab(\Gamma')$ and let $\pi\colon U\to\ZZ\Phi$ and $\delta\colon U\to G$ be homomorphisms defined by 
$\cL'_u\subset\cL(\pi(u))$ and $\cL'_u\subset\cL_{\delta(\alpha)}$, where $\Phi$ is the root system 
as in Theorem~\ref{th:Lss_rootsystem}, associated to the decomposition 
$\cL=\bigoplus_{\alpha\in\Phi\cup\{0\}}\cL(\alpha)$ with respect to the adjoint action of $\cH$. Then 
\begin{enumerate}
\item[(i)] $\pi$ is surjective with kernel $t(U)$.
\item[(ii)]
Any homomorphism $\lambda\mapsto u_\lambda$ splitting $\pi$ defines
a $\Phi$-grading on $\cL$ with the grading subalgebra $\frg:=\bigoplus_\alpha\cL'_{u_\alpha}$ 
(of type $B_r$ if $\Phi$ is $BC_r$) and
a $G$-grading on the coordinate algebra $\fra=\cA\oplus\cB\oplus\cC$, such that the isotypic
components of $\cL$ for the adjoint action of $\frg$ are $G$-graded subspaces, with the grading on
$\frg\otimes\cA$ given by $\deg(\frg_\alpha\otimes\cA_g)=g_\alpha g$ for all $g\in G$, and similarly
for $\frs\otimes\cB$ and $\cW\otimes\cC$ (if applicable), where $g_\alpha=\delta(u_\alpha)$.
\item[(iii)] 
For the $G$-gradings on $\fra$ and $\cL(0)$ as in (ii), the supports are contained in $t(G)$, 
the identity component of $\fra$ is $\FF1$, and the gradings on the subalgebras $\cD$ and 
$[\cL(0),\cL(0)]$ of $\cL(0)$ are special.\qed 
\end{enumerate}
\end{theorem}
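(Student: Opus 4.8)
\emph{Overall approach.} The statement essentially packages the construction developed in the paragraphs immediately preceding it, so the plan is threefold: (a) produce one fine refinement $\Gamma'$ of $\Gamma$ whose identity component is a Cartan subalgebra of $\cL_e$; (b) show that \emph{every} such $\Gamma'$ is a refinement of the canonical almost fine refinement $\Gamma^*_T$ for a suitable maximal torus $T\subseteq\Stab(\Gamma)$, so that the analysis preceding the theorem applies to it verbatim; (c) read off (i)--(iii) from that analysis.

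\emph{Steps (a) and (b).} First I would fix a maximal torus $T\subseteq\Stab(\Gamma)$ and refine $\Gamma^*_T$ to a fine abelian group grading $\Gamma'$. By Lemma~\ref{lm:GT} and Proposition~\ref{re:refinement}, $T=\Diag(\Gamma')^\circ$ is a maximal torus of $\Stab(\Gamma')$; since $\chr\FF=0$ and $\Gamma'$ is fine, Corollary~\ref{cor:reductive_fine} gives $\Stab(\Gamma')=\Diag(\Gamma')$, so the identity component of $\Gamma'$ is the Lie algebra $\cH$ of $T$, and $\cH$ is a Cartan subalgebra of $\cL_e$ because $\Stab(\Gamma)$ is reductive (Lemma~\ref{lm:red_cent}) with maximal torus $T$; this settles existence. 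Conversely, if $\Gamma'$ is \emph{any} fine refinement of $\Gamma$ with identity component $\cL'_e$ a Cartan subalgebra of $\cL_e$, then using $\cL\simeq\Der(\cL)$ via $\ad$ and, once more, $\Stab(\Gamma')=\Diag(\Gamma')$, the Lie algebra of the quasitorus $\Diag(\Gamma')$ is $\cL'_e$, which is toral, so $\dim\Diag(\Gamma')^\circ=\dim\cL'_e=\rank(\Stab(\Gamma))=\trank(\Gamma)$; hence $\Gamma'$ is almost fine with $\trank(\Gamma')=\trank(\Gamma)$, and Proposition~\ref{prop:characterize_can_refinement} shows $\Gamma'$ is a refinement of $\Gamma^*_{T'}$ with $T'=\Diag(\Gamma')^\circ$. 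I expect this equivalence — ``fine refinement with Cartan identity component'' $\Leftrightarrow$ ``fine refinement of the same toral rank'' — to be the main obstacle: it is what legitimizes the ``for any such $\Gamma'$'' clause and makes the construction independent of the chosen refinement; everything downstream is bookkeeping.

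\emph{Step (c), parts (i) and (ii).} By Theorem~\ref{th:Lss_rootsystem}, $\Phi$ is a root system and, after identifying a character of $T$ with its differential, $\ZZ\Phi=\frX(T)$; then $\pi\colon U\to\ZZ\Phi$ is by construction the natural map $U\to U/t(U)$ followed by the canonical isomorphism $U/t(U)\simeq\frX(T)=\ZZ\Phi$, hence surjective with kernel $t(U)$ (this is also \cite[Theorem~5.1]{Eld_root}), which is (i). Next I would fix a base $\Delta$ of $\Phi$ and lifts $u_\alpha\in U$ of $\alpha\in\Delta$ (possible by (i)); the $u_\alpha$ generate a free complement $U'$ of $t(U)$ in $U$, and \cite[Theorem~5.1]{Eld_root} (or \cite[Theorem~6.62]{EKmon}) identifies $\frg=\bigoplus_{u\in U'}\cL'_u$ as a simple subalgebra with Cartan subalgebra $\cH$ and root system $\Phi'\subseteq\Phi$ having base $\Delta$ (of type $B_r$ when $\Phi=BC_r$), realizing $\cL$ as $\Phi$-graded with grading subalgebra $\frg$. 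As recalled before the theorem, for the highest root $\lambda$ of $\frg$ the space $\frg_\lambda\otimes\cA=\{x\in\cL(\lambda)\mid[\frg_+,x]=0\}$ is $\Gamma'$-graded and one-dimensional in the $\frg$-slot, so it grades $\cA$ by $t(U)$; applying $\ad\frg$ propagates this to all of $\frg\otimes\cA$, and the same argument with the highest weights of $\frs$ and $\cW$ grades $\frs\otimes\cB$ and $\cW\otimes\cC$, so $\fra=\cA\oplus\cB\oplus\cC$ becomes $t(U)$-graded with $\frg_\mu\otimes\cA'_u=(\frg\otimes\cA)\cap\cL'_{u_\mu u}$ and likewise for the other isotypic components. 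Pushing forward along the homomorphism $\delta\colon U\to G$ determined by $\cL_g=\bigoplus_{u\in\delta^{-1}(g)}\cL'_u$ makes all these subspaces $G$-graded and yields $\deg(\frg_\alpha\otimes\cA_g)=g_\alpha g$ with $g_\alpha=\delta(u_\alpha)$, completing (ii).

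\emph{Part (iii).} Since $\delta(t(U))\subseteq t(G)$, the $G$-supports of $\fra$ and of $\cL(0)=(\frg_0\otimes\cA)\oplus(\frs_0\otimes\cB)\oplus(\cW_0\otimes\cC)\oplus\cD$ lie in $t(G)$. By Proposition~\ref{prop:reductive}, $\cL(0)_e=\cL_e\cap\cL(0)=\cH=\frg_0\otimes 1$; comparing with the displayed decomposition of $\cL(0)$ forces $\cA_e=\FF 1$ and $\cB_e=\cC_e=\cD_e=0$. Finally $\cH$, being a Cartan subalgebra of $\cL(0)=\Cent_\cL(\cH)$ contained in its center, equals $Z(\cL(0))$, whence $\cL(0)=\cH\oplus[\cL(0),\cL(0)]$ and $[\cL(0),\cL(0)]_e\subseteq[\cL(0),\cL(0)]\cap\cH=0$; similarly $\cD_e\subseteq\cD\cap\cH\subseteq\cD\cap\frg=Z(\frg)=0$ as $\frg$ is simple. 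Thus the gradings on $[\cL(0),\cL(0)]$ and on $\cD$ are special, which finishes (iii).
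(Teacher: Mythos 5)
Your proposal is correct and follows essentially the same route as the paper, whose ``proof'' of this theorem is precisely the construction carried out in the paragraphs preceding it (refine $\Gamma$ to $\Gamma^*_T$, then to a fine $\Gamma'$, invoke \cite[Theorem~5.1]{Eld_root} for the $\Phi$-grading, and read off the $t(U)$- and $G$-gradings on the isotypic components and coordinate algebra). The only genuine addition is your step (b), which makes explicit the equivalence between ``fine refinement whose identity component is a Cartan subalgebra of $\cL_e$'' and ``fine refinement of the same toral rank''; the paper leaves this bridge to Proposition~\ref{prop:characterize_can_refinement} implicit, and your argument for it is sound.
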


\begin{example}
The simple Lie algebra $\cL$ of type $E_8$ is the Lie algebra obtained by means of the Tits construction
using the Cayley algebra $\OO$ and the Albert (i.e., exceptional simple Jordan) algebra $\Alb$: 
$\cL=\Der(\OO)\oplus(\OO_0\otimes\Alb_0)\oplus\Der(\Alb)$
(see e.g. \cite[\S 6.2]{EKmon}). The Cayley algebra is endowed with a $\ZZ_2^3$-grading (a division
grading), and this induces naturally a $\ZZ_2^3$-grading $\Gamma$ on $\cL$. 
The group $\Aut(\Alb)$ (simple of type $F_4$)
embeds naturally in $\Stab(\Gamma)\subset\Aut(\cL)$, and any maximal torus $T$ in $\Aut(\Alb)$ is 
a maximal torus in $\Stab(\Gamma)$. The canonical almost fine refinement $\Gamma_T^*$ is the 
$\ZZ^4\times\ZZ_2^3$-grading obtained by combining the Cartan grading on $\Alb$ (induced by $T$) 
and the 
$\ZZ_2^3$-grading on $\OO$. It happens in this case that $\Gamma_T^*$ is fine. 

Here the root system $\Phi$ is of type $F_4$ and the isotypic decomposition is
given by the components in the Tits construction: $\frg=\Der(\Alb)$, $\cA=\FF 1$, $\cW=\Alb_0$, 
$\cC=\OO_0$, and $\cD=\Der(\OO)$ ($\cB=0$ in this case). The coordinate algebra is 
$\fra=\FF 1\oplus\OO_0$ is just the Cayley algebra $\OO$. The reductive subalgebra $\cL(0)$ is
the direct sum of the Lie algebra of $T$ (a Cartan subalgebra of $\Der(\Alb)$) and the simple Lie
algebra $\Der(\OO)$.
\end{example}

In conclusion, we note that the $\Phi$-grading on $\cL$ defined by a non-special fine grading 
$\Gamma'$ allows us to restate the conditions in Theorem~\ref{th:3} and 
Definition~\ref{df:admissible} quite explicitly. 
%Let $S_{\cA},\ldots, S_\cD$ be the supports of the $t(U)$-gradings on 
%$\cA,\ldots,\cD$ (as applicable), so 
%the support of $\fra$ is $S_\fra=S_\cA\cup S_\cB\cup S_\cC$ and 
Let $S_\fra$ and $S_\cD$ be the supports of the $t(U)$-gradings on $\fra$ and $\cD$, respectively, 
so the support 
of $\cL(0)$ is $S=S_\fra\cup S_\cD$. Then the almost fine coarsenings of $\Gamma'$ are determined 
by the subgroups $E\subset t(U)$ that are generated by some elements of the form $uv^{-1}$ 
with $u,v\in S$ (so $U/E$ is the universal group of the coarsening \cite[Corollary~1.26]{EKmon}) 
and satisfy $E\cap S=\{e\}$. A homomorphism $\gamma\colon U/E\to G$ is admissible if and only if 
its restriction to the support of each $\cL(\alpha)$ is injective, which amounts to $\gamma|_S$ 
being injective.

%-------------------------------------


\begin{thebibliography}{ABG02}

\bibitem[BKR18]{BKR_Lie} Y.~Bahturin, M.~Kochetov, and A.~Rodrigo-Escudero, \emph{Gradings 
on classical central simple real Lie algebras}, J. Algebra \textbf{506} (2018), 1--42.

\bibitem[ABG02]{ABG} B.~Allison, G.~Benkart, and Y.~Gao, \emph{Lie algebras graded by the 
root systems $BC_r$, $r\ge 2$},
Mem. Amer. Math. Soc. \textbf{751} (2002), x+158 pp.

\bibitem[Ara17]{Ara} D.~Aranda-Orna,  \emph{Fine gradings on simple exceptional Jordan pairs and 
triple systems}, 
J.~Algebra \textbf{491} (2017), 517--572.

\bibitem[AC21]{AC} D.~Aranda-Orna and A.S.~C\'ordova-Mart{\'\i}nez, \emph{Fine gradings on 
Kantor systems of Hurwitz type}, 
 Linear Algebra Appl. \textbf{613} (2021), 201--240.

\bibitem[BZ96]{BZ} G.~Benkart and E.~Zelmanov, \emph{Lie algebras graded by finite root systems 
and intersection matrix algebras}, 
Invent. Math. \textbf{126} (1996), 1--45.

\bibitem[BM92]{BermanMoody} S.~Berman and R.V.~Moody, \emph{Lie algebras graded by finite
 root systems and the intersection matrix algebras of Slodowy}, 
Invent. Math. \textbf{108} (1992), 323--347.

\bibitem[Bou02]{BouVI} N.~Bourbaki, \emph{Lie groups and Lie algebras. Chapters 4–6}, translated 
from the 1968 French original by Andrew Pressley, Elem. Math. (Berlin), Springer-Verlag, Berlin, 2002.
 xii+300 pp.

\bibitem[CE18]{CE} A.S.~C\'ordova-Mart{\'\i}nez and A.~Elduque, \emph{Gradings on 
semisimple algebras},
Linear Algebra Appl. \textbf{559} (2018), 145--171.

\bibitem[DET21]{DET} A.~Daza-Garc{\'\i}a, A.~Elduque, and L.~Tang, \emph{Cross 
products, automorphisms, and gradings}, 
Linear Algebra Appl. \textbf{610} (2021), 227--256.

\bibitem[DEM11]{DEM} C.~Draper, A.~Elduque, and C.~Mart{\'\i}n Gonz\'alez, 
\emph{Fine gradings on exceptional simple Lie superalgebras}, 
Internat. J. Math. \textbf{22} (2011), no.~12, 1823--1855.

\bibitem[Eld15]{Eld_root} A.~Elduque, \emph{Fine gradings and gradings by root systems 
on simple Lie algebras},
Rev. Mat. Iberoam. \textbf{31} (2015), no.~1, 245--266.

\bibitem[Eld16]{Eld16}
A.~Elduque, \emph{Gradings on algebras over algebraically closed fields}, in Non-associative 
and non-commutative algebra and operator theory, 113–121.
Springer Proc. Math. Stat., 160
Springer, Cham, 2016.

\bibitem[EK13]{EKmon} A.~Elduque and M.~Kochetov, \emph{Gradings on simple Lie algebras}, 
Mathematical Surveys and Monographs \textbf{189}, American Mathematical Society, Providence, RI;
Atlantic Association for Research in the Mathematical Sciences (AARMS), Halifax, NS, 2013.  xiv+336 pp.

\bibitem[EKR22]{EKR} A.~Elduque, M.~Kochetov, and A.~Rodrigo-Escudero, \emph{Gradings 
on associative algebras with involution and real forms of classical simple Lie algebras}, 
J.~Algebra \textbf{590} (2022), 61--138.

%\bibitem[EM93]{EM93} A.~Elduque and H.C.~Myung, \emph{On flexible composition algebras},
%Comm. Algebra \textbf{21} (1993), no.~7, 2481--2505.

\bibitem[GOV94]{OVG} V.V.~Gorbatsevich, A.L.~Onishchik, and E.B.~Vinberg, \emph{Lie groups and 
Lie algebras III. Structure of Lie groups and Lie algebras}, translated from the Russian by V.~Minachin, 
Encyclopaedia of Mathematical Sciences, vol. 41, Springer- Verlag, Berlin, 1994. (Translation 
from ``Itogi Nauki i Tekhniki, Ser. Sovrem. Probl. Mat., Fundam. Napravleniya 41 (1990)''.)

\bibitem[Hes82]{Hesselink} W.H.~Hesselink, \emph{Special and pure gradings of Lie algebras},
Math. Z. \textbf{179} (1982), no.~1, 135--149.

\bibitem[Hum75]{Hum_lag} J.E.~Humphreys, \emph{Linear algebraic groups}, 
Graduate Texts in Mathematics \textbf{21}, Springer-Verlag, New York-Heidelberg, 1975. xiv+247 pp.

%\bibitem[Hum95]{Hum_classes} J.E.~Humphreys, \emph{Conjugacy classes in semisimple 
%algebraic groups},
%Mathematical Surveys and Monographs \textbf{43}, American Mathematical Society, Providence, 
%RI, 1995. xviii+196 pp.
 
%\bibitem[KMRT98]{KMRT} M.-A.~Knus, A.~Merkurjev, M.~Rost, J.-P.~Tignol, \emph{The book 
%of involutions}, 
%American Mathematical Society Colloquium Publications \textbf{44}, American Mathematical 
%Society, Providence, RI, 1998. xxii+593 pp.

%\bibitem[Loo71]{Loos_triples} O.~Loos, \emph{Lectures on Jordan triples}, University of British 
%Columbia, Vancouver, B.C., 1971. iii+67 pp.

%\bibitem[Loo75]{Loos_pairs} O.~Loos, \emph{Jordan pairs}, Lecture Notes in Mathematics 
%\textbf{460}, 
%Springer-Verlag, Berlin-New York, 1975. xvi+218 pp.

\bibitem[PZ89]{PZ} J.~Patera and H.~Zassenhaus, \emph{On Lie gradings. I}, Linear Algebra 
Appl. \textbf{112} (1989), 87--159.

\bibitem[SGA3]{SGA3} M.~Demazure, A.~Grothendieck, et al., \emph{S\'eminaire de 
g\'eom\'etrie alg\'ebrique: Sch\'emas en groupes}, 
Lecture Notes in Mathematics \textbf{151}, \textbf{152}, \textbf{153}, Springer, 
Berlin-Heidelberg-New York, 1970.

\bibitem[Ste68]{Steinberg}
R.~Steinberg, \emph{ Endomorphisms of linear algebraic groups},
Mem. Amer. Math. Soc., No. \textbf{80},
American Mathematical Society, Providence, RI, 1968, 108 pp.

\bibitem[Wat79]{Wat} W.C.~Waterhouse, \emph{Introduction to affine group schemes}, 
Graduate Texts in Mathematics \textbf{66}, Springer-Verlag, New York-Berlin, 1979. xi+164 pp.

\bibitem[Yu14]{YuS} J.~Yu, \emph{Maximal abelian subgroups of spin groups and some 
exceptional simple Lie groups}, arXiv:1403.2679.

\bibitem[Yu16]{YuE} J.~Yu, \emph{Maximal abelian subgroups of compact simple Lie groups 
of type E}, Geom. Dedicata \textbf{185} (2016), 205--269.

\end{thebibliography}
\end{document}